\DeclareSymbolFont{rsfs}{U}{rsfs}{m}{n}
           \newcommand{\mylabel}[2]{#2\def\@currentlabel{#2}\label{#1}}
           \renewcommand\@biblabel[1]{#1.}
      \numberwithin{equation}{section}
\definecolor{antiquewhite}{rgb}{0.98, 0.92, 0.84}
\definecolor{buff}{rgb}{0.94, 0.86, 0.51}
\definecolor{palecopper}{rgb}{0.85, 0.54, 0.4}
\definecolor{fluorescentyellow}{rgb}{0.8, 1.0, 0.0}
\definecolor{britishracinggreen}{rgb}{0.0, 0.26, 0.15}
\definecolor{cobalt}{rgb}{0.0, 0.28, 0.67}
\DeclareSymbolFont{usualmathcal}{OMS}{cmsy}{m}{n}
\DeclareSymbolFontAlphabet{\mathcal}{usualmathcal}
\newcommand{\TT}{\mathbf{T}}
\newcommand{\BA}{{\mathbb{A}}}
\newcommand{\BC}{{\mathbb{C}}}
\newcommand{\BE}{{\mathbb{E}}}
\newcommand{\BL}{{\mathbb{L}}}
\newcommand{\BP}{{\mathbb{P}}}
\newcommand{\BQ}{{\mathbb{Q}}}
\newcommand{\BZ}{{\mathbb{Z}}}
\newcommand{\CA}{{\mathcal A}}
\newcommand{\CC}{{\mathcal C}}
\newcommand{\CE}{{\mathcal E}}
\newcommand{\CF}{{\mathcal F}}
\newcommand{\CI}{{\mathcal I}}
\newcommand{\CM}{{\mathcal M}}
\newcommand{\CN}{{\mathcal N}}
\newcommand{\CQ}{{\mathcal Q}}
\newcommand{\CV}{{\mathcal V}}
\newcommand{\CW}{{\mathcal W}}
\newcommand{\CZ}{{\mathcal Z}}
\newcommand{\simto}{\,\widetilde{\to}\,}
\DeclareMathOperator{\Cone}{\mathrm{Cone}}
\newcommand{\pt}{{\mathsf{pt}}}
\newcommand{\ch}{{\mathrm{ch}}}
\DeclareMathOperator{\Hilb}{Hilb}
\DeclareMathOperator{\Quot}{Quot}
\DeclareMathOperator{\Sym}{Sym}
\DeclareMathOperator{\coh}{coh}
\DeclareMathOperator{\Coh}{Coh}
\DeclareMathOperator{\vir}{\mathrm{vir}}
\DeclareMathOperator{\Exp}{Exp}
\DeclareMathOperator{\GL}{GL}
\newcommand{\derived}{\mathbf{D}}
\newcommand*{\defeq}{\mathrel{\vcenter{\baselineskip0.5ex \lineskiplimit0pt
                     \hbox{\scriptsize.}\hbox{\scriptsize.}}}%
                     =}
\newcommand{\nc}{\ensuremath{\rm nc}}
\newcommand{\fr}{\ensuremath{\rm fr}}
\newcommand\LQ{\ensuremath{\mathsf{L}}}
\tikzset{>=Latex}
\tikzset{GaugeNode/.style={circle,draw,inner sep=0pt,minimum size=10mm}}
\tikzset{FrameNode/.style={rectangle,draw,inner sep=0pt,minimum size=9mm}}
\tikzset{token/.style={circle,double,draw=black!70,fill=black!50,inner sep=0pt,minimum size=3mm}}
\newcommand{\into}{\hookrightarrow}
\newcommand{\onto}{\twoheadrightarrow}
\DeclareFontFamily{OT1}{rsfs}{}
\DeclareFontShape{OT1}{rsfs}{n}{it}{<-> rsfs10}{}
\DeclareMathAlphabet{\curly}{OT1}{rsfs}{n}{it}
\renewcommand\hom{\curly H\!om}
\newcommand\Hom{\operatorname{Hom}}
\newcommand\End{\operatorname{End}}
\newcommand\id{\operatorname{id}}
\newcommand{\OO}{\mathscr O}
\DeclareMathOperator{\bn}{{\bold{n}}}
\tikzset{commutative diagrams/arrow style=math font}
\tikzset{commutative diagrams/.cd,
mysymbol/.style={start anchor=center,end anchor=center,draw=none}}
\tikzset{
shift up/.style={
to path={([yshift=#1]\tikztostart.east) -- ([yshift=#1]\tikztotarget.west) \tikztonodes}
}
}
\theoremstyle{definition}
\newtheorem*{lemma*}{Lemma}
\newtheorem*{theorem*}{Theorem}
\newtheorem*{example*}{Example}
\newtheorem*{fact*}{Fact}
\newtheorem*{notation*}{Notation}
\newtheorem*{definition*}{Definition}
\newtheorem*{prop*}{Proposition}
\newtheorem*{remark*}{Remark}
\newtheorem*{corollary*}{Corollary}
\newtheorem*{conventions*}{Conventions}
\newtheorem{definition}{Definition}[section]
\newtheorem{example}[definition]{Example}
\newtheorem{remark}[definition]{Remark}
\newtheoremstyle{thm} 
        {3mm}
        {3mm}
        {\slshape}
        {0mm}
        {\bfseries}
        {.}
        {1mm}
        {}
\theoremstyle{thm}
\newtheorem{theorem}[definition]{Theorem}
\newtheorem{corollary}[definition]{Corollary}
\newtheorem{prop}[definition]{Proposition}
\newtheoremstyle{ex} 
        {3mm}
        {3mm}
        {}
        {0mm}
        {\scshape}
        {.}
        {1mm}
        {}
\theoremstyle{ex}
\newtheoremstyle{sol} 
        {3mm}
        {3mm}
        {}
        {0mm}
        {\scshape}
        {.}
        {1mm}
        {}
\theoremstyle{sol}
   \DeclareMathOperator{\oO}{\mathcal{O}}
   \DeclareMathOperator{\tf}{\mathfrak{t}}
\title[Gauge origami on broken lines]{Gauge origami on broken lines}
\author{Sergej Monavari}
\address{Ecole Polytechnique Fédérale de Lausanne (EPFL),  CH-1015 Lausanne, Switzerland}
\email{sergej.monavari@epfl.ch}
\begin{document}

\maketitle
\begin{abstract}
In analogy to Nekrasov's theory of gauge origami on intersecting branes, we introduce the gauge origami moduli space  on broken lines. We realize this moduli space as a Quot scheme parametrising zero-dimensional quotients of a torsion sheaf on two intersecting affine lines, and describe it as a moduli space of quiver representations. We construct a virtual fundamental class and virtual structure sheaf, by which we define  $K$-theoretic invariants. We compute its associated partition function for all ranks, and show that it reproduces the generating series of equivariant $\chi_{y}$-genus when the moduli space is smooth. Finally,  we relate our partition function with the virtual invariants of the Quot schemes of the affine plane and Nekrasov's partition function.
\end{abstract}

\section{Introduction}
\subsection{Overview} In a series of seminal papers \cite{Nek_BPS1, Nek_BPS2, Nek_BPS3, Nek_BPS4} on the BPS/CFT correspondence, Nekrasov introduced the \emph{gauge origami partition function}, which can be realised in Type IIB string theory studying the  dynamics of  D1-branes probing  a configuration of intersecting D5-branes.
\smallbreak
In analogy to Nekrasov's theory, we introduce the gauge origami moduli space on \emph{broken lines} exploiting Grothendieck's Quot schemes. We expect our construction to provide a suitable  mathematical framework to properly describe the \emph{coupled vortex systems} recently proposed in string theory in  \cite{KN_Gauge1}. In fact, as it was explained in \cite{KN_Gauge1},  these  models should capture  the dynamics of D1-branes probing intersecting D3-branes in Type IIB string theory, but a complete and systematic study of their geometry and of their associated partition function is currently lacking in the literature.
\smallbreak
The main results of this paper are:
\begin{itemize}
    \item the construction of  the gauge origami moduli space $\CM_{\overline{r}, n}$ along with a natural virtual fundamental cycle, by which we define virtual invariants in equivariant $K$-theory,  cf.~\Cref{sec: quiver},
    \item  an explicit closed formula for associated partition function $\CZ_{\overline{r}}(q)$ as a plethystic exponential, for all multi-ranks $\overline{r}=(r_1, r_2)$,  cf.~\Cref{sec:framing},
    \item a relation between  $\CZ_{\overline{r}}(q)$ and the classical Nekrasov's partition function with fundamental and anti-fundamental matter,  cf.~\Cref{sec: ADHM}.
\end{itemize}
We explain in the next sections  our results in more details.
\subsection{The moduli space}
 Let $\overline{r}=(r_1, r_2)$ and $n\geq 0$. We set 
 \[\mathcal{C}=Z(x_1x_2) \subset \BA^2\]
 to be  the union of the two coordinate axes of $\BA^1_i=Z(x_i)\subset \BA^2$. Let $\iota_i: \BA^1_i\hookrightarrow \mathcal{C}$ be the inclusions of the irreducible components, and define the torsion sheaf
 \[\CE_{\overline{r}}=\iota_{1,*}\oO^{{r_1}}_{\BA^1_1}\oplus \iota_{2,*}\oO^{{r_2}}_{\BA^1_2}\]
on $\mathcal{C}$. We define the \emph{gauge origami moduli space   on broken lines} as Grothendieck's Quot scheme
\[\CM_{\overline{r}, n}=\Quot_\mathcal{C}(\CE_{\overline{r}},n),\]
which parametrizes flat families of zero-dimensional quotients $[\CE_{\overline{r}}\onto Q]$ on $\mathcal{C}$, where two such quotients are identified whenever their kernels coincide.
\smallbreak
If $\overline{r}=(0,r)$, then $\CM_{\overline{r}, n}=\Quot_{\BA^1}(\oO^r,n)$ recovers the usual Quot scheme of zero-dimensional quotients on $\BA^1$, which is a smooth irreducible variety of dimension $rn$ \cite{MR_lissite}. However, for an arbitrary choice of ranks $\overline{r}$,  the moduli space $\CM_{\overline{r}, n} $  can feature several irreducible components, possibly of different dimension. See \Cref{example: blow up} for the case with $r_1, r_2\geq 1$ and  $n=1$, where we completely work out the geometry of  $\CM_{\overline{r},1}$, which is shown have three irreducible component. In this example, two of the components are identified with the smooth Quot schemes $\Quot_{\BA_i^1}(\oO^{r_i},1)$, while the third component -- which intersects both the other two -- is isomorphic to the projective space $\BP^{r_1+r_2-1}$ and should be thought as a \emph{bubble}, parametrising  the possible degeneration of a quotient $[\oO^{r_i}_{\BA^1_i}\to Q]$ when the support of $Q$ moves towards the origin $0\in \BA^1_i$.
\subsubsection{Quiver model}
As we already discussed, $\CM_{\overline{r}, n}$ is in general not equidimensional  -- and, in particular, singular -- which makes the task of extracting relevant invariants challenging. 

To bypass this problem, we show that the moduli space $\CM_{\overline{r}, n} $ admits a description as a moduli space of quiver representations, by embedding into the \emph{non-commutative Quot scheme} $\mathcal M^{\nc}_{\overline r,n}$, which parametrises representations of the quiver in \Cref{fig:tetrahedron-quiver}. Our first result is that $ \CM_{\overline{r}, n} $ is globally cut out inside $\CM^{\nc}_{\overline r,n}$ as the zero locus of a section of a vector bundle.

 \begin{theorem}[\Cref{thm: isotropic construction}]\label{thm: zero locus intro}
    Let $\overline{r}=(r_1, r_2)$ and $n\geq 0$. There exists a vector bundle $\CV$ on $\CM^{\nc}_{\overline{r}, n} $ together with a section $s$ such that $\CM_{\overline r,n}$ is realised as the zero locus $Z(s)$
    \[
\begin{tikzcd}
& \CV\arrow[d]\\
\CM_{\overline{r}, n}\cong Z(s)\arrow[r, hook, "\iota"] &\CM^{\nc}_{\overline{r}, n}.\arrow[u, bend right, swap, "s"]
\end{tikzcd}
\]
\end{theorem}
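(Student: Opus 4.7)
The plan is to view $\CM^{\nc}_{\overline{r}, n}$ as a moduli space of stable representations of the tetrahedron quiver of \Cref{fig:tetrahedron-quiver} with no path relations imposed, so that $\CM_{\overline{r}, n}$ is carved out by precisely those relations that encode (i) the module structure over $\oO_{\mathcal{C}} = k[x_1, x_2]/(x_1 x_2)$ and (ii) the framing compatibility with $\CE_{\overline{r}}$. I would realise these relations simultaneously as the vanishing locus of a tautological section of a vector bundle built from the universal data on $\CM^{\nc}_{\overline{r}, n}$.

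Concretely, let $\CW$ be the universal rank-$n$ bundle on $\CM^{\nc}_{\overline{r}, n}$, and let $B_i : \CW \to \CW$ and $I_i: V_i \otimes \oO \to \CW$ denote the universal endomorphisms and framing maps (with $V_i$ the framing space of dimension $r_i$). A closed point lies in $\CM_{\overline{r}, n}$ precisely when
\[ B_1 B_2 = B_2 B_1 = 0, \qquad B_1 I_1 = 0, \qquad B_2 I_2 = 0,\]
the first pair expressing that $\CW$ carries an $\oO_{\mathcal{C}}$-module structure, and the second that the framing is $\oO_{\mathcal{C}}$-linear from $\CE_{\overline{r}}$ (since $x_i$ annihilates $\iota_{i,*}\oO^{r_i}_{\BA^1_i}$). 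Assembling these into a single tautological section yields
\[ s = (B_1 B_2,\, B_2 B_1,\, B_1 I_1,\, B_2 I_2) \in H^0\bigl(\CM^{\nc}_{\overline{r}, n}, \CV\bigr)\]
of the bundle $\CV = \End(\CW)^{\oplus 2} \oplus \lHom(V_1\otimes \oO, \CW) \oplus \lHom(V_2\otimes \oO, \CW)$.

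To conclude $Z(s) \cong \CM_{\overline{r}, n}$ scheme-theoretically, I would verify the isomorphism at the level of functors of points. For a test scheme $T$, a $T$-point of $Z(s)$ consists of a stable family of quiver data satisfying the above relations; the framing maps then assemble into a surjection $\CE_{\overline{r}} \boxtimes \oO_T \onto \CW$ whose cokernel vanishes by cyclicity, thereby recovering the Quot functor. Conversely, any $T$-flat family of zero-dimensional quotients of $\CE_{\overline{r}}$ defines such a quiver representation by extracting the actions of $x_1, x_2$ on the quotient sheaf and the framing maps out of each summand of $\CE_{\overline{r}}$.

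The main obstacle -- hinted at by the word ``isotropic'' in the theorem's label -- is ensuring that the scheme-theoretic vanishing of $s$ recovers $\CM_{\overline{r}, n}$ with the correct scheme structure. A naive enumeration of relations may package $s$ into a bundle of too large rank under a natural duality pairing on the space of relations, producing the wrong virtual dimension. Choosing $\CV$ as an isotropic (Lagrangian-like) summand with respect to this duality is presumably what makes the construction cut out the Quot scheme with its correct structure, and in turn allows the virtual fundamental class developed later in \Cref{sec: quiver} to be defined from the data $(\CM^{\nc}_{\overline{r}, n}, \CV, s)$ via the Koszul complex.
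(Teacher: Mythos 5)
Your identification of the relations and your functor-of-points argument are essentially the paper's proof: the paper likewise embeds $\CM_{\overline r,n}$ into $\CM^{\nc}_{\overline r,n}$, writes down a tautological section whose vanishing encodes the $\oO_{\CC}$-module structure together with $B_iI_i=0$, and checks the scheme-theoretic identification by showing that a quotient satisfying the relations factors through $\CE_{\overline r}$ (and conversely), with the same argument carrying over flat families. Your observation that $x_i$ annihilates $\iota_{i,*}\oO^{r_i}_{\BA^1_i}$ is exactly the source of the framing relations.

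However, your choice of $\CV$ is not the paper's, and the discrepancy matters. You impose $B_1B_2=0$ and $B_2B_1=0$ as two independent relations valued in $\End(\CW)^{\oplus 2}$, so your bundle has rank $2n^2+(r_1+r_2)n$, giving virtual dimension $\dim\CM^{\nc}_{\overline r,n}-\rk\CV=-n^2$. The paper instead takes the single summand $\Lambda^2\BC^2\otimes\End(V)$ (rank $n^2$) with section $(e_1\wedge e_2)\otimes[B_1,B_2]+\sum_i B_iI_i$, i.e.\ it only imposes the \emph{commutator} relation; the vanishing $B_1B_2=0$ is then a consequence of $[B_1,B_2]=0$, $B_iI_i=0$ and the cyclicity defining $\mathsf U_{\overline r,n}$ (every generator of $V$ has the form $B_{\hat\imath}^{\,b}I_i(w)$, which $B_1B_2$ kills). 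This is what produces virtual dimension $0$, which is the whole point of the theorem: \Cref{cor: virtual classes} and the entire localisation computation of the partition function (\Cref{prop: t vir} computes $\CV|_{\bn}$ with a single copy of $t_1^{-1}t_2^{-1}Q_{\bn}\overline{Q_{\bn}}$) depend on this rank. You correctly flag the danger of packaging redundant relations into too large a bundle, but your proposed remedy (an isotropic/Lagrangian splitting of a duality pairing) is not what happens here and is not needed; the fix is simply to drop the redundant copy of $\End(\CW)$ and impose only the commutator. As written, your $Z(s)$ does represent the same Quot functor (both ideals agree on the stable locus), so the literal existence statement survives, but the obstruction theory you would extract from your $(\CV,s)$ is not the one the rest of the paper uses.
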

Thanks to \Cref{thm: zero locus intro} and by  the work of Behrend-Fantechi \cite{BF_normal_cone}, the gauge origami moduli space $\CM_{\overline{r}, n} $ is endowed with virtual cycles in homology and $K$-theory
 \begin{align*}
    [\CM_{\overline{r}, n}]^{\vir}&\in A_{0}\left(\CM_{\overline{r}, n}\right),\\
\oO^{\vir}_{\CM_{\overline{r}, n}}&\in K_0\left(\CM_{\overline{r}, n}\right).
\end{align*}
Recall  that, if $\overline{r}=(0,r)$, then the moduli space $\CM_{\overline{r}.n}$ is smooth. In this setting, we show in \Cref{prop: caso smooth line ug} that the ($\BC^*$-equivariant) virtual fundamental class can be explicitly described as
    \begin{align}\label{eqn: intro y chi}
        \oO^{vir}_{\CM_{\overline{r},n }}=\Lambda_{-y}T_{\CM_{\overline{r},n }}^*\in K_0(\CM_{\overline{r},n } )\left[y\right],
    \end{align}
  where $T_{\CM_{\overline{r},n }}$ is the tangent bundle of $ \CM_{\overline{r},n }$ and  $\Lambda_{-y}(\cdot)$ is the weighted total exterior power. Here,  $y$ is the equivariant parameter of a natural (and trivial) $\BC^*$-action\footnote{This $ \BC^*$-action is the restriction of a more general (and non-trivial) torus $\TT$-action on $\CM_{\overline{r},n }$, see \Cref{sec: ytorus action}.} on $\CM_{\overline{r},n }$, cf.~\Cref{sec: smooth}.

\subsection{Partition function}\label{sec: intro partition function}
The gauge origami moduli space $ \CM_{\overline{r},n }$ is not proper, therefore we cannot directly define invariants via intersection theory. However, it is naturally endowed with the action of torus $\TT$ of rank $2+r_1+r_2$,   whose equivariant parameters we denote by $(t_1, t_2, w_{11}, \dots, w_{2r_{2}})$, cf.~\Cref{sec: ytorus action}. We define the \emph{gauge origami partition function} as the generating series
\begin{align}\label{eqn: intro inv}
    \CZ_{\overline{r}}(q)=\sum_{n\geq 0} q^n\cdot \chi\left(\mathcal{M}_{\overline{r}, n}, \oO^{\vir}_{\mathcal{M}_{\overline{r}, n}}\right)\in \BQ(t_1, t_2, w)\llbracket q \rrbracket,
\end{align}
where integration is defined via $\TT$-equivariant residues in $K$-theory, see \Cref{sec:invariants}.

In the smooth case $\overline{r}=(0,r)$, by the explicit description of the virtual structure sheaf \eqref{eqn: intro y chi}, the partition function \eqref{eqn: intro inv} reproduces the generating series of the equivariant $\chi_{-t_2}$-genus of $\mathcal{M}_{\overline{r}, n}$, which is readily computed using the localization theorem in $K$-theory.  

Our second main result is a closed formula for the partition function $\CZ_{\overline{r}}(q) $ for all ranks $\overline{r}$. We set $ \CZ^{(1)}(q)=\CZ_{(1,0)}(q)$ and $
    \CZ^{(2)}(q)=\CZ_{(0,1)}(q)$ to be the ''rank 1'' partition functions.
\begin{theorem}[\Cref{thm:factorization}, \Cref{cor: explicit expression inv}]\label{thm: explicit intro}
    Let $\overline{r}=(r_1, r_2)$. We have
    \[
    \CZ_{\overline{r}}(q)=\Exp\left(q\cdot \frac{(1-t_1t_2)(1-t_1^{r_1}t_2^{r_2})}{(1-t_1)(1-t_2)}\right),
    \]
    where $\Exp$ is the plethystic exponential \eqref{eqn: on ple}.
    Moreover, there is a factorisation 
      \begin{align*}
        \CZ_{\overline{r}}(q)=\prod_{\alpha=1}^{r_1}\CZ^{(1)}( q t_1^{r_1-\alpha}t_2^{r_2})\cdot \prod_{\alpha=1}^{r_2}\CZ^{(2)}( q t_2^{r_2-\alpha}).
    \end{align*}
\end{theorem}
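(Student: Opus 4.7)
The plan is to prove the factorisation first, via $\TT$-equivariant K-theoretic virtual localisation, and then deduce the closed plethystic formula by reduction to the rank-$1$ cases. Applying virtual localisation to the $\TT$-action on $\CM_{\overline r,n}$ reduces $\chi(\CM_{\overline r,n}, \oO^{\vir})$ to a sum of local contributions over the $\TT$-fixed locus. Since $\CE_{\overline r}$ splits equivariantly as a sum of framed summands $\iota_{i,*}\oO_{\BA^1_i}\otimes w_{i,\alpha}$ and any $\TT$-fixed quotient of $\iota_{i,*}\oO_{\BA^1_i}$ must be supported at the origin and thus of the form $\iota_{i,*}(\oO_{\BA^1_i}/(x_{3-i}^{m}))$, the fixed locus $\CM_{\overline r,n}^\TT$ consists of isolated points indexed by tuples $\vec n = (n_{i,\alpha})$ of non-negative integers summing to $n$.

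The main work is the computation of the virtual tangent character $T^{\vir}_{\vec n}$ at each such fixed point. Using the global zero-locus presentation $\CM_{\overline r,n}\cong Z(s)\subset \CM^{\nc}_{\overline r,n}$ from \Cref{thm: zero locus intro}, one has $T^{\vir}_{\vec n} = T^{\nc}|_{\vec n} - \CV|_{\vec n}$, and both pieces can be read off from the tetrahedron quiver of \Cref{fig:tetrahedron-quiver} in terms of the weights $t_1,t_2,w_{i,\alpha}$. The structural claim to establish — and the main technical obstacle — is that after substituting the explicit fixed-point data and simplifying, $T^{\vir}_{\vec n}$ decomposes bilinearly across framings as
\[
T^{\vir}_{\vec n} = \sum_{i,\alpha} T^{(i)}(n_{i,\alpha}) + \sum_{(i,\alpha)\neq (j,\beta)} I_{(i,\alpha),(j,\beta)}(n_{i,\alpha}, n_{j,\beta}),
\]
where $T^{(i)}(m)$ is the tangent character of the smooth rank-$1$ moduli at the fixed point of length $m$, and $I_{(i,\alpha),(j,\beta)}$ is an explicit interaction term supported at the node of $\mathcal C$, depending only on the two lengths and the equivariant weights of the two framings. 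The off-diagonal terms, mixing framings on \emph{different} axes, are the nontrivial novelty and must be extracted from the quiver data at the origin.

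With this splitting in hand, each rank-$1$ tangent character sums over $n_{i,\alpha}$ into a factor of the form $\CZ^{(i)}(q)$, while each interaction term exponentiates to a rational shift of the $q$ variable by a monomial in $t_1,t_2$ and the framing weights $w_{i,\alpha}$. After fixing a linear ordering of the framings, these shifts telescope to the monomials $t_1^{r_1-\alpha}t_2^{r_2}$ and $t_2^{r_2-\alpha}$, producing the stated factorisation; crucially the $w_{i,\alpha}$ must cancel out, reflecting the fact that $\CZ_{\overline r}(q)$ is independent of the framing weights. The rank-$1$ case is then settled by direct localisation: $\CM_{(0,1),n}\cong\Sym^n\BA^1$ is smooth with a unique $\TT$-fixed point of tangent weights $t_1^{-1},\dots,t_1^{-n}$, and by \eqref{eqn: intro y chi} with $y=t_2$ the local contribution equals $\prod_{i=1}^n (1-t_2 t_1^i)/(1-t_1^i)$, which sums via the $q$-binomial theorem to $\Exp\bigl(q(1-t_1t_2)/(1-t_1)\bigr)$; the symmetric computation gives $\CZ^{(1)}(q)$, and combining these with the factorisation yields the claimed plethystic closed form.
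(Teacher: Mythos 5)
Your overall architecture (virtual localisation, classification of the isolated $\TT$-fixed points by tuples of lengths, decomposition of the vertex into diagonal and interaction terms, reduction to the rank-$1$ case) matches the paper's, and your rank-$1$ computation via the smooth description of $\Sym^n\BA^1$ and the $q$-binomial identity is correct. However, there is a genuine gap at the decisive step. You claim that each interaction term ``exponentiates to a rational shift of the $q$ variable by a monomial'' and that the $w_{i,\alpha}$ ``must cancel out, reflecting the fact that $\CZ_{\overline r}(q)$ is independent of the framing weights.'' Neither holds term by term, and the framing independence you invoke as a known fact is itself one of the main theorems requiring proof. Concretely, for $\overline r=(1,1)$, $n=1$ and the fixed point with the length concentrated on the first axis, the off-diagonal contribution is
\begin{equation*}
\mathfrak e\bigl(-\mathsf{v}^{(21,11)}_{\bn}\bigr)=\frac{1-w_{21}w_{11}^{-1}t_2}{1-w_{21}w_{11}^{-1}},
\end{equation*}
a nontrivial rational function of the framing weights rather than a monomial shift of $q$; only the \emph{sum over all fixed points} of a given size is $w$-independent (one checks directly that adding the contribution of the other fixed point cancels the $w$-dependence), so no term-by-term telescoping of interaction factors is available.

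The missing ingredient is the rigidity statement (\Cref{thm: framinh independence} in the paper): $\CZ_{\overline r}(q)$ is a homogeneous degree-$0$ rational expression in the $w_{i\alpha}$ with no poles along $1-w_{i\alpha}^{-1}w_{j\beta}t^{\nu}$, hence constant in the $w$'s. The pole-freeness is not formal: it requires showing that each weight $w_{i\alpha}^{-1}w_{j\beta}t^{\nu}$ is compact, i.e.\ that the fixed locus of its kernel torus is proper, which ultimately rests on the properness of the Quot-to-Chow morphism. Only after establishing this may one legitimately specialise $w_{i\alpha}=L^{N_{i\alpha}}$ with a suitable hierarchy of exponents and let $L\to\infty$; in that limit the interaction factors degenerate to $1$ or $t_j^{n_{i\alpha}}$ (\Cref{prop: limits}), and the resulting monomials reassemble into the shifts $t_1^{r_1-\alpha}t_2^{r_2}$ and $t_2^{r_2-\alpha}$. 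Your argument needs this rigidity step (or a substitute for it) to be complete; as written, the factorisation does not go through.
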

The factorisation of the invariants into \emph{rank 1 theories}  has already been observed in a similar context in Donaldson-Thomas theory   \cite{FMR_higher_rank, CKM_crepant, FM_tetra, FT_0, FT_1}, for  motivic invariants \cite{MR_nested_Quot, CR_framed_motivic, CRR_higher_rank, MRhyper, GLMRS_double-nested-1} and  in string theory \cite{NP_colors, dZNPZ_playing_index_M_theory}. See also \cite[Sec.~6.4.7]{KLT_DTPT} where a rank-2-to-rank-1 reduction result is interpreted as a   wall-crossing phenomenon.
\subsubsection{Strategy of the proof}\label{sec: strategy intro}
We briefly explain the main ideas behind the proof of \Cref{thm: explicit intro}. The torus fixed locus  $\CM_{\overline{r}, n}^\TT $ is reduced, zero-dimensional and classified  by tuples $\bn=(\bn_1, \bn_2)$, where each $\bn_i$ is an $r_i$-tuple of non-negative numbers, see \Cref{prop: fixed locus reduced}. Therefore by   the virtual localisation in $K$-theory \cite{FG_riemann_roch, Qu_virtual_pullback},  the gauge origami partition function can be expressed as
\begin{align*}
       \CZ_{\overline{r}}(q)=\sum_{\bn}\mathfrak{e}(-T_{\bn}^{\vir})\cdot q^{\lvert\bn\rvert},
\end{align*}
where $ T_{\bn}^{\vir}$ is the \emph{virtual tangent bundle} at the fixed point corresponding to $\bn$ and $ \lvert\bn\rvert$ denotes its \emph{size}. Each $ \mathfrak{e}(-T_{\bn}^{\vir})$ is a rational function in the $\TT$-equivariant parameters and  should be thought as an \emph{equivariant volume} associated to each $\bn$, which contributes to the partition function $ \CZ_{\overline{r}}(q)$ in a purely combinatorial way, thefore  allowing the partition function to be amenable for computations, as  in the original  framework of the  \emph{vertex formalism}  \cite{MNOP_1}.  The second key point is  of global nature, and asserts that the tetrahedron instantons  partition function does not depend on the \emph{framing parameters} $w$, see  \Cref{thm: framinh independence}. This independence is the shadow of a \emph{rigidity principle} and ultimately relies on the properness of the Quot-to-Chow morphism.

Granting this framing independence, we can scale the framing parameters $w$ to infinity, at different \emph{speeds}. A clever choice of such scaling yields a   combinatorial expression of the partition function where the factorisation property appears. Finally, the explicit expression in terms of the plethystic exponential is obtained by combining the factorised form of the partition function with the explicit formula for the \emph{rank 1} case, which is obtained by a much simpler computation performed on the symmetric products of $\BA^1$ combined with a standard combinatorial identity.
\subsubsection{Nekrasov-Okounkov twist}
Nekrasov-Okounkov \cite{NO_membranes_and_sheaves} introduced a \emph{symmetrised} virtual structure sheaf $\widehat{\oO}^{\vir}$, in the context of the conjectural relation between Donaldson-Thomas theort of Calabi-Yau threefolds and the M2-brane index in string theory. In our case, the \emph{Nekrasov-Okounkov twist} results simply in a twist of the virtual structure sheaf by (the square root) of a torus character
\[
  \widehat{\oO}^{\vir}=\oO^{\vir}\otimes \left(t_1^{r_1}t_2^{r_2}\right)^{-n/2},
\]
see \Cref{prop: twist NO}. Combining the latter identity with \Cref{thm: explicit intro}, we obtain the following closed formula, where we formally set $[x]=x^{1/2}-x^{-1/2}$.
\begin{corollary}[\Cref{cor: NO inv}]
       Let $\overline{r}=(r_1, r_2)$. We have
      \[
  \sum_{n\geq 0} q^n\cdot \chi\left(\mathcal{M}_{\overline{r}, n}, \widehat{\oO}^{\vir}_{\mathcal{M}_{\overline{r}, n}}\right)=\Exp\left(q\cdot  \frac{[t_1t_2][t_1^{r_1} t_2^{r_2}] }{[t_1][t_2]}\right).
    \]
\end{corollary}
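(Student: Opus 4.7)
The plan is to deduce this corollary as an essentially algebraic consequence of the two ingredients stated just above: the explicit formula for $\CZ_{\overline{r}}(q)$ from \Cref{thm: explicit intro}, and the identity $\widehat{\oO}^{\vir}=\oO^{\vir}\otimes (t_1^{r_1}t_2^{r_2})^{-n/2}$ provided by \Cref{prop: twist NO}. No further geometric input is required.

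The first step is to convert the Nekrasov--Okounkov twist into a substitution in the formal variable $q$. Since each quotient of length $n$ is multiplied by the constant character $(t_1^{r_1}t_2^{r_2})^{-n/2}$, taking $\TT$-equivariant Euler characteristics and summing over $n$ gives
\[
\sum_{n\geq 0} q^n\cdot\chi\!\left(\CM_{\overline{r},n},\widehat{\oO}^{\vir}_{\CM_{\overline{r},n}}\right)=\CZ_{\overline{r}}\!\left(q\cdot (t_1^{r_1}t_2^{r_2})^{-1/2}\right).
\]
Because $\Exp$ is a plethystic exponential and we are rescaling $q$ by a monomial in the equivariant parameters, this rescaling pulls through as a multiplication of the argument of $\Exp$ by the same monomial.

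The second step is a short algebraic manipulation. Using $1-x=-x^{1/2}[x]$ with $[x]=x^{1/2}-x^{-1/2}$, I would rewrite
\[
\frac{(1-t_1t_2)(1-t_1^{r_1}t_2^{r_2})}{(1-t_1)(1-t_2)}=(t_1^{r_1}t_2^{r_2})^{1/2}\cdot\frac{[t_1t_2]\,[t_1^{r_1}t_2^{r_2}]}{[t_1]\,[t_2]},
\]
the square-root prefactors from the two numerator brackets contributing $(t_1t_2)^{1/2}(t_1^{r_1}t_2^{r_2})^{1/2}$ and those from the denominator contributing $(t_1t_2)^{1/2}$. After multiplying the argument of $\Exp$ by $(t_1^{r_1}t_2^{r_2})^{-1/2}$ coming from the first step, the half-integer monomial cancels exactly, and one reads off the claimed expression.

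There is essentially no obstacle here beyond bookkeeping of square roots; the only point that would deserve a word of care is verifying that the rescaling $q\mapsto q\cdot (t_1^{r_1}t_2^{r_2})^{-1/2}$ commutes with $\Exp$ in the sense above, which follows formally from the definition of the plethystic exponential applied term by term.
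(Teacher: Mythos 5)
Your proposal is correct and follows the same route as the paper, which simply states that the formula follows immediately from \Cref{prop: twist NO} together with \Cref{cor: explicit expression inv}; you have filled in the (correct) bookkeeping the paper leaves implicit. In particular your identity $1-x=-x^{1/2}[x]$, the cancellation of the resulting monomial prefactor against $(t_1^{r_1}t_2^{r_2})^{-1/2}$, and the observation that rescaling $q$ by a monomial character commutes with $\Exp$ are all exactly what is needed.
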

\subsubsection{Cohomological limit}
The entire discussion of \Cref{sec: intro partition function} can be formulated in $\TT$-equivariant cohomology, rather than in $\TT$-equivariant $K$-theory, by integrating over the virtual fundamental class $ [\CM_{\overline{r}, n}]^{\vir}$, similarly to the original framework of \cite{MNOP_1}.
\begin{corollary}[\Cref{cor: cohom}]\label{cor: cohom intro}
       Let $\overline{r}=(r_1, r_2)$. We have 
     \[\sum_{n\geq 0}q^n\cdot \int_{[\CM_{\overline{r}, n}]^{\vir}}1=\left(\frac{1}{1-q}\right)^{  \frac{(s_1+s_2)(r_1s_1+r_2s_2)}{s_1s_2}}.\]
\end{corollary}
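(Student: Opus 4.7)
The plan is to deduce the corollary directly from the closed $K$-theoretic formula of \Cref{thm: explicit intro} by passing to the cohomological limit. Concretely, I would use the standard substitution $t_i = e^{s_i \hbar}$ which converts $\TT$-equivariant $K$-theory into $\TT$-equivariant cohomology and, via virtual localisation, sends the $K$-theoretic contribution $\mathfrak{e}(-T^{\vir}_{\bn})$ at each isolated fixed point $\bn$ to the cohomological residue $e(T^{\vir,\mov}_{\bn})^{-1}$ in the limit $\hbar \to 0$. Because the plethystic exponent in \Cref{thm: explicit intro} writes the virtual tangent character as a difference of two equinumerous collections of weights, the powers of $\hbar$ cancel and the limit is finite, with no residual Todd-class corrections -- this reflects the fact that $\CM_{\overline{r}, n}$ has virtual dimension zero.

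Concretely, writing $A(t_1, t_2) = \frac{(1 - t_1 t_2)(1 - t_1^{r_1} t_2^{r_2})}{(1 - t_1)(1 - t_2)}$ and expanding each factor $(1 - e^{as\hbar}) = -as\hbar + O(\hbar^{2})$, one computes
\[
\lim_{\hbar \to 0} A\!\left(t_1^k, t_2^k\right)\Big|_{t_i = e^{s_i \hbar}} \;=\; \frac{(s_1 + s_2)(r_1 s_1 + r_2 s_2)}{s_1 s_2}
\]
for every $k \geq 1$. Remarkably, the answer is independent of $k$: the four linear factors in $\hbar$ each carry one factor of $k$, and these cancel between numerator and denominator. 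Setting $c \defeq \frac{(s_1 + s_2)(r_1 s_1 + r_2 s_2)}{s_1 s_2}$ and unfolding the definition of the plethystic exponential, the cohomological limit of the $K$-theoretic formula becomes
\[
\exp\!\left(\sum_{k \geq 1} \frac{c\, q^{k}}{k}\right) \;=\; \exp\!\left(-c\log(1 - q)\right) \;=\; \left(\frac{1}{1 - q}\right)^{c},
\]
which is the claimed identity.

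The main obstacle -- and the only non-formal step -- will be justifying that the $K$-theoretic Euler characteristic degenerates exactly to $\int_{[\CM_{\overline{r}, n}]^{\vir}} 1$ in the $\hbar \to 0$ limit, without residual Todd contributions. This is standard for virtual classes of virtual dimension zero with isolated $\TT$-fixed locus, and would follow by applying the cohomological virtual localisation formula directly to the same reduced zero-dimensional fixed locus identified in \Cref{sec: strategy intro}, then matching the $\hbar$-leading behaviour of $\mathfrak{e}(-T^{\vir}_{\bn})$ with the cohomological weight product $e(T^{\vir,\mov}_{\bn})$. Once this matching is in place, the corollary reduces entirely to the elementary asymptotic computation above.
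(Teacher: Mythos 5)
Your proposal is correct and follows essentially the same route as the paper: the paper defines $\CZ^{\coh}_{\overline{r}}(q)$ via Graber--Pandharipande localisation on the same isolated reduced fixed locus, observes that $\mathfrak{e}(t^{\mu})|_{t_i=e^{bs_i}}=b\,e(t^{\mu})+O(b^2)$ so that for the rank-zero vertex term the $b\to 0$ limit of $\mathfrak{e}$ is exactly $e$ with no residual corrections, and then takes the same limit of the plethystic exponential from \Cref{thm: explicit intro}, with the $k$-independence of the limit of $A(t_1^k,t_2^k)$ yielding the exponent $\frac{(s_1+s_2)(r_1s_1+r_2s_2)}{s_1s_2}$. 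Your identification of the matching of $K$-theoretic and cohomological localisation contributions as the only non-formal step is precisely how the paper organises the argument.
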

In the above expression, integration is once again understood $\TT$-equivariantly, where $s_1, s_2$ are the generators of the  $(\BC^*)^2$-equivariant cohomology of the point. While the above result could be proved by adapting the strategy of \Cref{sec: strategy intro} to the setting of equivariant cohomology, it is interesting to remark that the formula in \Cref{cor: cohom intro} can be realised as an appropriate limit of the partition function $ \CZ_{\overline{r}}(q)$. This limiting procedure is a shadow of the \emph{dimensional reduction} phenomenon happening at the level of the partition functions in supersymmetric string theory, see e.g.~the discussion in \cite[Sec.~1.2.2]{FM_tetra}.
\subsubsection{Elliptic genus}
A  common string-theoretic approach in this context is to introduce  \emph{instanton partition functions} from the elliptic genus of the low-energy worldvolume theory on D1-branes for the appropriate brane systems, see e.g.~\cite{BBPT_elliptic_DT, PYZ_tetrahedron}. Mathematically, this can be realised by studying the \emph{virtual elliptic genus} \cite{FG_riemann_roch} of the corresponding moduli space, or their suitable variation, see e.g.~\cite{FMR_higher_rank, FM_tetra,Liu_IMRN_ELL, Ont_critical}. However, as already noticed in \cite{FMR_higher_rank, FM_tetra},  even though the localisation principles allows for exact computation up to any given order,  the elliptic refinement of the partition function in \emph{higher rank} typically depends non-trivially on the framing parameters. This results in the impossibility to replicate the strategy of \ref{sec: strategy intro} to find an exact closed formula for the refined partition function purely in terms of the rank 1 theory.

\subsection{ADHM quiver}
The classical Nekrasov's partition function \cite{Nek_instantons} is a fundamental player in the physics of $\CN=2$ four-dimensional gauge theory. Mathematically, it can be realised performing intersection theory on the smooth moduli space of framed sheaves $\CM^{\fr}_{\overline{r},n}$, where the natural identification comes once more by the natural description of this space as a a Nakajima quiver variety, see \Cref{sec: ADHM}. We show that the gauge origami partition function on broken lines can be obtained from $\CM^{\fr}_{\overline{r},n} $ integrating the (deformed) Euler class of a tautological bundle $\CF$ and its dual. In string-theoretic terms, we believe this should correspond to the Nekrasov partition function with fundamental and antifundamental matters, with the matter parameters fine-tuned as
\begin{align*}
      \chi\left(\mathcal{M}_{\overline{r}, n}, \oO^{\vir}_{\mathcal{M}_{\overline{r}, n}}\right)&=  \chi\left( \CM^{\fr}_{r,n},  \Lambda_{-t_1}\CF_1^{*}\otimes \Lambda_{-t_2}\CF_2^{*}\otimes  \Lambda_{-t_1t_2}\CF\right),\\
     \int_{[\CM_{\overline{r}, n}]^{\vir}}1&=\int_{\CM^{\fr}_{r,n}}e\left(\CF_1\cdot t_1^{-1} \right)\cdot e\left(\CF_2\cdot t_2^{-1} \right)\cdot e(\CF^*\cdot t_1^{-1}t_2^{-1}),
\end{align*}
see \Cref{cor: ADHM vpull}.
\smallbreak
In a similar fashion, we show that the partition function $\CZ_{\overline{r}}(q)$ coincides with a tautological integral on the Quot scheme $\Quot_{\BA^2}(\oO^{r}, n)$  of zero-dimensional quotients on $\BA^2$
\begin{align*}
      \chi\left(\mathcal{M}_{\overline{r}, n}, \oO^{\vir}_{\mathcal{M}_{\overline{r}, n}}\right)&=  \chi\left( \Quot_{\BA^2}(\oO^{r}, n), \oO^{\vir}_{\Quot_{\BA^2}(\oO^{r}, n)}\otimes \Lambda^\bullet(\CI^{[n]})^{*}\right),\\
     \int_{[\CM_{\overline{r}, n}]^{\vir}}1&=\int_{[\Quot_{\BA^2}(\oO^{r}, n)]^{\vir}}e(\CI^{[n]}),
\end{align*}
see \Cref{cor: v pull formula}. In both cases, the proof is obtained by a direct application of the virtual pullback formula \cite{Manolache-virtual-pb, KKP}.
\subsection{Geometry and Physics}
Modern enumerative geometry systematically addressed the mathematical study of 
instanton partition functions  via moduli spaces of sheaves -- in our case, Quot schemes, see e.g.~\cite{Okounk_Lectures_K_theory, Arb_K-theo_surface, FMR_higher_rank,  CKM_K_theoretic, Mon_canonical_vertex, CKM_crepant, Mon_PhD},  providing a geometric interpretation  of  some results initially introduced in topological string theory, see for instance  \cite{IKV_topological_vertex, AK_quiver_matrix_model, BBPT_elliptic_DT, Nek_magnificient_4, NP_colors, ST_crepant} for a non-exhaustive list of references. This work continues the circle of ideas we developed in our previous work \cite{FM_tetra}, where we initiated the program of studying systems of intersecting branes -- as in Nekrasov's seminal work on gauge origami, see also   \cite{PYZ_JHEP, KN_Gauge1, ST_tetra, KN_Gauge2, KN_Gauge3, Kim_Double_quiver} -- via Quot schemes of singular varieties. From this point of view, one should see the present work (resp. \cite{FM_tetra}) as geometrising the instanton partition function from D1-branes probing intersecting D3-branes (resp. D7-branes) in Type IIB. The case of intersecting D5-branes, i.e. Nekrasov's classical gauge origami, will appear in a forthcoming work of Arbesfeld-Kool-Lim \cite{AKL_origami} which crucially relies on  the novel foundational construction of Oh-Thomas \cite{OT_1}.
\subsubsection{Compact curves}
We conclude by remarking that the enumerative geometry of Quot schemes  on \emph{smooth} projective curves recently manifested a rising interest, thanks to their application to a wide spectrum of problems, ranging from geometric representation theory, quantum $K$-theory and Geometric Langlands, see e.g.~\cite{MO_vafaIntr, OS_Invariants_K-theory, MN_Quot_derived, MN_Quot_Naka, MRhyper, SZ_Quantum1, SZ_Quantum2, Ric_motive_quot_locally_free, Nesterov_Moduli_Quasimap_Higgs}. In view of the derived structure recently constructed on hyperquot schemes in full generality \cite{MPR_derived},  we believe it would be interesting to explore  the geometry and invariants   of a \emph{compact} version of the moduli space $\CM_{\overline{r
},n}$, where we replace  the pair $\CC\subset \BA^2 $ with  $C\subset S$  any boundary divisor with smooth irreducible components inside a smooth projective surface.

\subsection*{Acknowledgments}
We are grateful to Nadir Fasola, Taro Kimura, Martijn Kool, Henry Liu  for useful discussions, and to Aitor Iribar Lopez and Rahul Pandharipande for asking an interesting  question at the Algebraic Geometry and Moduli Seminar at ETH Zürich.
S.M. is supported by the FNS Project 200021-196960 ``Arithmetic aspects of moduli spaces on
curves''.
 \section{Moduli space}
\subsection{Quot scheme}
 Let $\overline{r}=(r_1, r_2)$ and $n\geq 0$. Recall that we set 
 \[\mathcal{C}=Z(x_1x_2) \subset \BA^2\]
 to be  the union of the two coordinate axes of $\BA^1_i=Z(x_i)\subset \BA^2$. Let $\iota_i: \BA^1_i\hookrightarrow \mathcal{C}$ be the inclusions of the irreducible components, and define the torsion sheaf
 \[\CE_{\overline{r}}=\iota_{1,*}\oO^{{r_1}}_{\BA^1_1}\oplus \iota_{2,*}\oO^{{r_2}}_{\BA^1_2}\]
on $\mathcal{C}$. We defined the \emph{gauge origami the moduli space  on broken lines} as Grothendieck's Quot scheme
\[\CM_{\overline{r}, n}=\Quot_\mathcal{C}(\CE_{\overline{r}},n),\]
which parametrizes flat families of zero-dimensional quotients $[\CE_{\overline{r}}\onto Q]$ on $\mathcal{C}$, where two such quotients are identified whenever their kernels coincide.
 We have a natural closed immersion 
 \begin{align}\label{eqn: embedding Quot in QUot C4}
     \CM_{\overline{r}, n}\hookrightarrow \Quot_{\BA^2}(\oO_{\BA^2}^{r_1+r_2},n),
 \end{align}
 obtained by precomposing the quotients $[\CE_{\overline{r}}\onto Q]$ with \[\oO_{\BA^2}^{r_1+r_2} \onto \CE_{\overline{r}}.\]
 Denote by $ \Sym^{n}\mathcal{C}$ the \emph{$n$-th symmetric power} of $\mathcal{C}$. Attached to $\CM_{\overline{r}, n}$ there is the \emph{Quot-to-Chow morphism} \cite{Rydh1, Fantechi-Ricolfi-structural}
 \begin{align}\label{eqn: QTC}
   \rho: \CM_{\overline{r}, n}\to \Sym^{n}\mathcal{C},   
 \end{align}
 which, at the level of closed points, sends a quotient to its 0-dimensional support, counted with multiplicity.
   \begin{example}\label{ex: smooth case}
    If $\overline{r}=(0,r)$,  the moduli space of gauge origami coincides with the usual Quot scheme on the affine line $\BA^1$
    \begin{align*}
        \CM_{\overline{r}, n}\cong \Quot_{\BA^1}(\oO^{r}, n),
    \end{align*}
    which is a smooth irreducible variety of dimension $nr$, see e.g.~\cite{MR_lissite}. If moreover $r=1$, then 
       \begin{align*}
        \CM_{\overline{r}, n}\cong \Sym^n\BA^1\cong \BA^n
    \end{align*}
    is the symmetric product of $n$ points on $\BA^1$.
\end{example}
 \begin{example}\label{example: blow up}
    Let $\overline{r}=(r_1, r_2)$ with $r_1,r_2\geq 1$ and $n= 1$. Recall the embedding \eqref{eqn: embedding Quot in QUot C4} 
 \[
 \CM_{\overline{r},1}\hookrightarrow \Quot_{\BA^2}(\oO_{\BA^2}^{r_1+r_2},1)\cong \BA^2\times \BP^{r_1+r_2-1},
 \]
 where the last isomorphism follows by \cite[Rem.~2.2]{MR_lissite}. Define the closed subvarieties
 \begin{align*}
     Y_i&= \Quot_{\BA^1_i}(\oO^{r_i}_{\BA^1_i},1)\cong \BA^1_i\times \BP^{r_i-1}\hookrightarrow \BA^2\times \BP^{r_1+r_2-1}, \quad i=1,2,\\
     Y_0&= \Quot_{\pt}(\oO_{\pt}^{r_1+r_2}, 1)\cong \{(0,0)\}\times \BP^{r_{1}+r_2-1}\hookrightarrow\BA^2\times \BP^{r_1+r_2-1}.
 \end{align*}
 Then we have that $\CM_{\overline{r},1}$ has three irreducible components, given by $Y_1, Y_2, Y_0$, i.e. 
 \[
 \CM_{\overline{r},1}=Y_1\cup Y_2\cup Y_0.
 \]
 In other words, $ \CM_{\overline{r},1}$ seems to behave as a  moduli space of \emph{expanded degenerations} of length 1 quotients of $\CE_{\overline{r}}$ on $\CC$. In fact, as soon as the support of a quotient $[\oO_{\BA_{i}^1}^{r_i}\onto Q]$ in $Y_i$ moves toward the origin $0\in \mathcal{C}$, a \emph{bubbling} phenomenon occurs, creating the \emph{bubble} $Y_0$, where higher rank quotients occur, yet only supported on $0\in \CC$.
 \end{example}
    The situation in Example \ref{example: blow up} readily generalizes to  $n\geq 1$, where a locally closed stratification of $  \CM_{\overline{r}, n}$ is induced by a natural stratification of $\Sym^n\mathcal{C} $, suggesting  an  interpretation of $\CM_{\overline{r}, n}$ as a  degeneration phenomenon.
  
 \subsection{Quiver model}\label{sec: quiver}
Consider  the framed quiver $\LQ$ with one node labelled by $0$ with two loops and two nodes $\infty_1, \infty_2$ with   framing vectors to the node $0$, see  Fig.~\ref{fig:tetrahedron-quiver}.
\begin{figure}[ht]\contourlength{2.5pt}
    \centering\vspace{-3mm}
    \begin{tikzpicture}
    \node(F123) at (0,1){$\infty_1$};
    \node(F234) at (0,-1){$\infty_2$};
    \node(Gk0) at (2.5,0){$0$};
    \draw[->](F123) to[]node{\contour{white}{}} (Gk0);
    \draw[->](F234) to[]node{\contour{white}{}} (Gk0);
    \draw[->](Gk0) to[out=-100,in=-30,looseness=15]node{\contour{white}{}} (Gk0);
        \draw[<-](Gk0) to[out=30,in=100,looseness=15]node{\contour{white}{}} (Gk0);
    \end{tikzpicture}
    \caption{Framed quiver $\LQ$.}
    \label{fig:tetrahedron-quiver}
\end{figure}
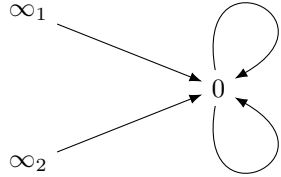

Let $\overline{r}=(r_1, r_2)$ and $n\geq 0$.  A $ \LQ$\emph{-representations} with dimension vector $(\overline{r}, n)$ is the data of:
\begin{itemize}
    \item a tuple of vector spaces $(W_1,W_2,V)$ (one to each node of $\LQ$) such that $\dim V=n$ and $\dim W_i=r_i$ for $i=1,2$,
    \item homomorphisms $B_a\in\End(V)$ and $I_i\in\Hom(W_i,V)$   for $a,i=1, 2$.
\end{itemize}

Let $(W_1,W_2,V)$  be a tuple of  vector spaces with dimension vector $(r_1, r_2, n)$. The moduli space of $\LQ$-representations is the   affine space 
\[
\mathsf R_{\overline r,n}\defeq \End(V)^{\oplus 2}\oplus\bigoplus_{i=1}^2\Hom(W_i,V)
\]
of dimension $(2n^2+(r_1+r_2)n)$. 
An element of $\mathsf R_{\overline r,n}$, together with the choice of $(W_1,W_2,V)$, forms a  $\LQ$-representation.

The group $\GL(V)$ acts naturally on the representation space $\mathsf R_{\overline r,n}$ by simultaneously conjugating the endomorphisms $B_a\in\End(V)$ and scaling the maps $I_i\in \Hom(W_i, V)$. Consider then  the open subscheme 
\[
\mathsf U_{\overline r,n}=\Set{(B_1,B_2,I_1,I_2)\in \mathsf R_{\overline r,n}|\,\bigoplus_{i=1}^2\BC\langle B_1,B_2\rangle I_i(W_i)\cong V}\subset \mathsf R_{\overline r,n}. 
\]
The $\GL(V)$-action is free on $\mathsf U_{\overline r,n}$, so the quotient 
\[\mathcal M^{\nc}_{\overline r,n}\defeq\mathsf U_{\overline r,n}/\GL(V)\]
exists as a smooth $(n^2+n(r_1+r_2))$-dimensional quasiprojective variety, cf.~\cite[Lemma~2.1]{Ric_noncomm}. The moduli space $\mathcal M^{\nc}_{\overline r,n}$ is known as the \textit{non-commutative Quot scheme}, and parametrises finite length quotients 
\[[\mathbb C\langle x_1,x_2\rangle^{\oplus (r_1+r_2)}\onto Q],\]
 cf.~\cite[Thm.~2.5]{BR_higher_rank} and \cite[Prop.~2.3]{Ric_noncomm}.

 \subsection{Zero-locus construction}\label{sec: zero locus}
We prove in this section Theorem \ref{thm: zero locus intro} from the Introduction. Over the affine space $\mathsf R_{\overline r,n}$,  consider the trivial  vector bundle of rank $n^2+(r_1+r_2)n$
\[
\mathcal V\defeq\Lambda^2\BC^2\otimes\End(V)\oplus\bigoplus_{i=1}^2\Hom(W_i,V).
\]
The vector bundle $\CV$ restricts to the open locus $\mathsf U_{\overline r,n}\subset \mathsf R_{\overline r,n}$ and is naturally $\GL(V)$-equivariant, with respect to the $\GL(V)$-action on $ U_{\overline r,n}$. Therefore, $\CV$ descends to a vector bundle on the quotient $\mathcal M^{\nc}_{\overline r,n}$. With a slight abuse of notation, we keep denoting the induced vector bundle by $\mathcal V$.
 \begin{theorem}\label{thm: isotropic construction}
    Let $\overline{r}=(r_1, r_2)$ and $n\geq 0$. There exists a section $s\in H^0(\CM_{\overline r,n}^{\nc},\CV)$ such that $\CM_{\overline r,n}$ is realised as the zero locus $Z(s)$
    \[
\begin{tikzcd}
& \CV\arrow[d]\\
\CM_{\overline{r}, n}\cong Z(s)\arrow[r, hook, "\iota"] &\CM^{\nc}_{\overline{r}, n}.\arrow[u, bend right, swap, "s"]
\end{tikzcd}
\]
\end{theorem}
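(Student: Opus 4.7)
The plan is to define an explicit $\GL(V)$-equivariant section on the representation space $\mathsf R_{\overline r,n}$, descend it to $\CM^{\nc}_{\overline r,n}$, and then identify its zero locus with $\CM_{\overline r,n}$ via a functor-of-points comparison. Concretely, set
\[
\widetilde s \colon \mathsf R_{\overline r,n} \longrightarrow \CV, \qquad (B_1, B_2, I_1, I_2) \longmapsto [B_1, B_2]\oplus B_1I_1 \oplus B_2I_2,
\]
where the commutator sits naturally in the $\Lambda^2\BC^2\otimes \End(V)$ summand (the antisymmetry in the two loops of $\LQ$ furnishes the $\Lambda^2\BC^2$ factor and ensures equivariance for the $\GL_2$-action rotating $x_1, x_2$). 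Because conjugation by $g\in \GL(V)$ acts as $B_i\mapsto gB_ig^{-1}$ and $I_i\mapsto gI_i$, the map $\widetilde s$ is $\GL(V)$-equivariant, so it restricts to $\mathsf U_{\overline r,n}$ and descends to a section $s \in H^0(\CM^{\nc}_{\overline r,n},\CV)$.

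The heart of the proof is the set-theoretic equality $Z(s)=\CM_{\overline r,n}$, whose key ingredient is that the relation $B_1B_2=0$ is \emph{automatic} on $Z(s)$. Let $(B_1, B_2, I_1, I_2)$ represent a point of $Z(s)$, so $[B_1,B_2]=B_1I_1=B_2I_2=0$ and stability holds. Commutativity makes $V$ into a $\BC[x_1,x_2]$-module, and stability expresses every $v\in V$ as a linear combination of vectors $p(B_1,B_2)I_i(w_i)$ for polynomials $p$ and $(i,w_i)\in \{1,2\}\times W_i$. Commuting $B_1B_2$ past $p(B_1,B_2)$ then yields
\[
B_1B_2\cdot p(B_1,B_2)I_i(w_i) \;=\; p(B_1,B_2)\cdot B_1B_2\, I_i(w_i) \;=\; 0,
\]
using $B_1I_1=0$ when $i=1$ and $B_2I_2=0$ when $i=2$. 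Hence $V$ acquires the structure of a module over $\BC[x_1,x_2]/(x_1x_2)$, and the relations $B_iI_i=0$ let the framing maps lift to surjections $\iota_{i,*}\oO^{r_i}_{\BA^1_i}\onto \widetilde V$, producing a point $[\CE_{\overline r}\onto \widetilde V]\in \CM_{\overline r,n}$. Conversely, any zero-dimensional quotient $[\CE_{\overline r}\onto Q]$ on $\mathcal{C}$ yields, by taking global sections on the support, a stable $\LQ$-representation satisfying the three equations above, so the two point sets coincide.

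The scheme-theoretic identification extends the same argument to $T$-families. A $T$-point of $Z(s)$ consists of a rank-$n$ locally free sheaf on $T$ together with endomorphisms $B_i$ and framing maps $I_i$ (modulo $\GL$-gauge) satisfying $[B_1,B_2]=B_1I_1=B_2I_2=0$ and fibrewise stability. The fibrewise argument applied together with Nakayama (to upgrade pointwise vanishing of the morphism $B_1B_2$ of locally free sheaves to vanishing on $T$) gives $B_1B_2=0$ on the whole sheaf. The data thus defines an $\oO_\mathcal{C}$-module flat over $T$ of fibrewise length $n$, equivalent, by the tautological equivalence between such sheaves on $\mathcal{C}\times T$ and triples of locally free sheaf plus commuting endomorphisms with $B_1B_2=0$, to a flat family of zero-dimensional quotients $\CE_{\overline r}\boxtimes \oO_T\onto \mathcal Q$ on $\mathcal{C}\times T$. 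Pushforward along the finite morphism $\pi\colon \mathcal{C}\times T\to T$ (finite because the quotients have zero-dimensional fibres) provides the inverse construction, whence the two functors of points coincide and $Z(s)\simeq \CM_{\overline r,n}$.

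The main subtlety is the automatic vanishing $B_1B_2=0$ granted by the three displayed equations together with stability: this is precisely what keeps the rank of $\CV$ equal to $n^2+(r_1+r_2)n$, matches it with the dimension of $\CM^{\nc}_{\overline r,n}$, and is consistent with the expected virtual dimension zero of $\CM_{\overline r,n}$. Without this cancellation the bundle $\CV$ would be too small to cut out $\CM_{\overline r,n}$ and the theorem could not hold in the stated form.
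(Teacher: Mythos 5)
Your proof follows essentially the same route as the paper: the same section $s=(e_1\wedge e_2)\otimes[B_1,B_2]+\sum_i B_iI_i$, descent by $\GL(V)$-equivariance, and the identification of $Z(s)$ with $\CM_{\overline r,n}$ by showing the quotient factors through $\CE_{\overline r}$; your explicit observation that $B_1B_2=0$ follows automatically from $[B_1,B_2]=0$, $B_iI_i=0$ and stability is exactly what the paper's factorisation argument encodes implicitly, and it is a worthwhile point to spell out. The one inaccuracy is in your family argument: Nakayama does not upgrade \emph{fibrewise} vanishing of the morphism $B_1B_2$ to vanishing over $T$ (this fails over a non-reduced base, e.g.\ $T=\Spec\BC[\epsilon]/(\epsilon^2)$); the correct order is to use Nakayama only to promote fibrewise stability to surjectivity of $\bigoplus_i\oO_T\langle B_1,B_2\rangle I_i(W_i)\to\CV$ over $T$, after which your identity $B_1B_2\,p(B_1,B_2)I_i(w)=0$ holds verbatim over $\oO_T$ and gives $B_1B_2=0$ scheme-theoretically.
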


\begin{proof}
We start by constructing a section of $\CV$ on the affine space $R_{\overline r,n}$.  Let $(B_1, B_2, I_1, I_2)\in R_{\overline r,n}$ be a quiver representation and  let $(e_1, e_2)$ be a basis of $\BC^2$. Define the section 
\begin{align*}
    s(B_1, B_2, I_1, I_2)=(e_1\wedge e_2)\otimes [B_1B_2]+\sum_{i=1}^2 B_iI_i.
\end{align*}
Since the restriction of the section to the open locus $ \mathsf U_{\overline r,n}$ is $\GL(V)$-equivariant, it descends to a section on the quotient $ \CM^{\nc}_{\overline r,n}$, which  we keep denoting by  $s\in H^0(\CM^{\nc}_{\overline r,n},\CV)$.

There is a natural closed embedding
\begin{align*}
    \iota:\CM_{\overline r,n}\hookrightarrow \CM^{\nc}_{\overline r,n},
\end{align*}
which is obtained by the closed embedding \eqref{eqn: embedding Quot in QUot C4}, followed by the closed embedding of the Quot scheme $\Quot_{\BA^2}(\oO_{\BA^2}^{r_1+r_2},n)$ into its non-commutative counterpart, defined via their moduli functors. We claim that
\[\CM_{\overline r,n}\cong Z(s).\]
Let $[\alpha: \CE_{\overline{r}}\onto Q]\in  \CM_{\overline r,n}$ be a closed point. Recall that the embedding into the non-commutative Quot scheme identifies this quotient with the quiver representation $(B_1, B_2, I_1, I_2)\in  \CM^{\nc}_{\overline r,n}$, where 
\begin{align*}
    &B_i:Q\to Q, \quad i=1,2,\\
    &I_i(a_{ij})=\alpha(\tilde{a}_{ij})\in Q, \quad i=1,2,\quad j=1, \dots, r_i, 
\end{align*}
where $B_i$ is the multiplication by $x_i$, the vectors $(a_{ij})_{j} $ form  a basis of $W_i$ and $\tilde{a}_{ij}$ is the constant 1 in the $j$-th component of $\iota_{i,*}\oO_{\BA_i}^{r_i}\subset \CE_{\overline{r}}$.
Via this correspondence, the matrices $B_i$ are the module multiplication by $x_i$, and the \emph{cyclic vectors} $I_i(a_{ij})$ are the images of the constant 1, each coming from a summand of $\CE_{\overline{r}}$. 

By construction, if $(B_1, B_2, I_1, I_2)\in  \CM^{\nc}_{\overline r,n} $ is a closed point in the image of $\iota$, then it satisfies the relations
\begin{align}\label{eqn: relations}
\begin{cases}
    [B_1B_2]=0,\\
B_iI_i=0, \quad i=1, 2,
\end{cases}
\end{align}
where $[B_1B_2]=B_1B_2-B_2B_1$ denotes the commutator operator. 

On the other hand, consider a quotient  $[\mathbb C\langle x_1,x_2\rangle^{\oplus (r_1+r_2)}\onto Q] \in \CM^{\nc}_{\overline r,n}$ satisfying the relations \eqref{eqn: relations}. Then the quotient factors through
\[
\begin{tikzcd}
    \mathbb C\langle x_1,x_2\rangle^{\oplus (r_1+r_2)} \arrow[r,two heads]\arrow[d,two heads]& Q\\
  \CE_{\overline{r}} \arrow[ur, two heads]&   
\end{tikzcd}
\]
thus defining a closed point in $\CM_{\overline r,n}$, as desired.
We conclude by noticing that  the same argument carries over flat families, which yields the desired isomorphism.
\end{proof}
\subsection{Virtual fundamental classes}\label{sec: virtual classes}
We construct in this section of a virtual fundamental class on  $\CM_{\overline{r}, n}$ exploiting the quiver moduli description of \Cref{thm: isotropic construction}. 

We briefly recall the classical construction of Behrend-Fantechi \cite{BF_normal_cone} in a simpler global toy model. Let a scheme $Z$
\[
\begin{tikzcd}
& \CE\arrow[d]\\
Z:=Z(s)\arrow[r, hook, "\iota"] &\CA\arrow[u, bend right, swap, "s"]
\end{tikzcd}
\]
be the zero locus of an  section $s\in \Gamma(\CA,\CE)$, where $\CE$ is a rank $r$ vector bundle over a smooth quasi-projective variety $\CA$. 
Then there exists an induced obstruction theory on $Z$
\begin{equation}\label{eqn: obs th}
 \begin{tikzcd}
 \big[  \CE^*|_Z\arrow[d, "s"]\arrow[r,"(ds)^*"]&\Omega_{\CA|Z}\big]\arrow[d, phantom, "\parallel"]\arrow[r, phantom, "\cong"]&\BE\arrow[d]\\
    \big[\CI/\CI^2\arrow[r, "d"]&\Omega_{\CA|Z}\big]\arrow[r, phantom, "\cong"]&\BL_Z,
\end{tikzcd}
\end{equation}
in $\derived^{[-1,0]}(Z)$, where $\CI \subset \oO_{A}$ is the ideal sheaf of the inclusion $Z \into \CA$ and $\BL_Z$ denotes the truncated cotangent complex. By the work of \cite{BF_normal_cone}, there exist a \emph{virtual fundamental class} and a \emph{virtual structure sheaf} on $Z$ satisfying
\begin{align*}
    \iota_*[Z]^{\vir}&=e(\CE)\cap [\CA]\in A_{\dim \CA- r}\left(\CA\right),\\
\iota_*\oO_Z^{\vir}&=\mathfrak{e}(\CE)\in K_0\left(\CA\right),
\end{align*}
where we set
\begin{align}\label{eqn: K th EUler}
   \mathfrak{e}(V):=\sum_{i\geq 0}(-1)^i\Lambda^i V^* \in K^0(\CA)
\end{align}
to be the total exterior power of the dual of a vector bundle $V$ on $\CA$, as a class in $K$-theory. We call the integer $\dim \CA-r$ the \emph{virtual dimension} of $Z$.

Theorem \ref{thm: isotropic construction} immediately implies the following corollary.
\begin{corollary}\label{cor: virtual classes}
   The gauge origami moduli space  $\CM_{\overline{r}, n}$ is endowed with a virtual fundamental class and a virtual structure sheaf of virtual dimension zero
   \begin{align*}
    [\CM_{\overline{r}, n}]^{\vir}&\in A_{0}\left(\CM_{\overline{r}, n}\right),\\
\oO^{\vir}_{\CM_{\overline{r}, n}}&\in K_0\left(\CM_{\overline{r}, n}\right).
\end{align*}
\end{corollary}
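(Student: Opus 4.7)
The proof is essentially a direct application of \Cref{thm: isotropic construction} combined with the global toy model recalled in \eqref{eqn: obs th}. The plan is as follows. By \Cref{thm: isotropic construction}, the moduli space $\CM_{\overline r,n}$ is realised as the zero locus $Z(s)$ of the section $s\in H^0(\CM^{\nc}_{\overline r,n},\CV)$. Since the non-commutative Quot scheme $\CM^{\nc}_{\overline r,n}$ is a smooth quasi-projective variety (as discussed in \Cref{sec: quiver}) and $\CV$ is a vector bundle on it, the hypotheses of the Behrend--Fantechi construction of \cite{BF_normal_cone} recalled in \Cref{sec: virtual classes} are met. This immediately endows $\CM_{\overline r,n}$ with an induced two-term perfect obstruction theory of the form \eqref{eqn: obs th}, and hence with a virtual fundamental class $[\CM_{\overline{r}, n}]^{\vir}\in A_*(\CM_{\overline{r}, n})$ and a virtual structure sheaf $\oO^{\vir}_{\CM_{\overline{r}, n}}\in K_0(\CM_{\overline{r}, n})$.

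The only remaining task is a dimension count to verify that the virtual dimension is zero. Recall that $\dim \CM^{\nc}_{\overline r,n}=n^2+n(r_1+r_2)$, while the rank of the vector bundle $\CV=\Lambda^2\BC^2\otimes\End(V)\oplus\bigoplus_{i=1}^2\Hom(W_i,V)$ is $n^2+n(r_1+r_2)$ as well. Therefore the virtual dimension is
\[
\vdim \CM_{\overline r,n}=\dim\CM^{\nc}_{\overline r,n}-\rk \CV=0,
\]
yielding classes in degree zero homology and $K$-theory as claimed. There is no real obstacle here: once \Cref{thm: isotropic construction} is in hand, the statement is a bookkeeping exercise, with the only content being that the dimensions of the ambient smooth space and of the fibres of $\CV$ happen to coincide — a feature that ultimately reflects the Calabi--Yau-like nature of the relations $[B_1,B_2]=0$ and $B_iI_i=0$ paired with the two framings.
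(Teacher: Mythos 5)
Your proposal is correct and matches the paper's own argument, which simply observes that \Cref{thm: isotropic construction} together with the Behrend--Fantechi zero-locus construction recalled in \Cref{sec: virtual classes} immediately yields the virtual classes, the virtual dimension being $\dim\CM^{\nc}_{\overline r,n}-\rk\CV=(n^2+n(r_1+r_2))-(n^2+n(r_1+r_2))=0$. Your dimension count is exactly the bookkeeping the paper leaves implicit.
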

Recall from \Cref{ex: smooth case} that, if $\overline{r}=(0,r)$, then the moduli space $\CM_{\overline{r}.n}$ is smooth. In this setting, we show in \Cref{prop: caso smooth line ug} that the ($\BC^*$-equivariant) virtual fundamental class can be explicitly described as
    \begin{align*}
        \oO^{vir}_{\CM_{\overline{r},n }}=\Lambda_{-y}T_{\CM_{\overline{r},n }}^*\in K_0(\CM_{\overline{r},n } )\left[y\right],
    \end{align*}
  where $T_{\CM_{\overline{r},n }}$ is the tangent bundle of $ \CM_{\overline{r},n }$ and $y$ is the equivariant parameter of a natural (and trivial) $\BC^*$-action on $\CM_{\overline{r},n }$, cf.~\Cref{sec: smooth}.

 \section{The partition function}
\subsection{Torus representations and their weights}
Let $\TT = (\BC^*)^g$ be an algebraic torus, with character lattice $\widehat{\TT}\cong \BZ^g$. Let $K^0_{\TT}(\pt)$ be the Grothendieck group of the category of $\TT$-representations. Any finite-dimensional $\TT$-representation $V$ splits as a sum of $1$-dimensional representations called the \emph{weights} of $V$. Each weight corresponds to a character $\mu \in \widehat{\TT}$, and in turn each character corresponds to a monomial $\tf^\mu = \tf_1^{\mu_1}\cdots \tf_g^{\mu_g}$ in the coordinates of $\TT$. In other words, there is an isomorphism 
\begin{equation*}
K^0_{\TT}(\pt) \cong \BZ \left[\tf^\mu \mid \mu \in \widehat{\TT}\right],
\end{equation*}
identifying the class of a $\TT$-representation with  its decomposition into weight spaces. We will therefore sometimes identify a (virtual) $\TT$-representation with its character. 
\subsection{Torus action}\label{sec: ytorus action}
Let $\overline{r}=(r_1, r_2)$  and define the algebraic tori
\[
\TT=\TT_0\times \TT_1, \quad \TT_0=(\BC^*)^2, \quad \TT_1=(\BC^*)^{r_1+r_2}.
\]
The torus $\TT_0$ acts on $\BA^2$ by
\[(t_1, t_2)\cdot (x_1, x_2)=(t_1x_1, t_2 x_2).\]
 In particular, it restricts to the union of the coordinate axes $\mathcal{C}\subset \BA^2$ and naturally lifts to the gauge origami moduli space  $\CM_{\overline{r}, n}$, by moving the support of the quotients. The torus $\TT_1$  acts on $\CM_{\overline{r}, n}$ by scaling the fibers of $\CE_{\overline{r}}$. Therefore, we have an induced $\TT$-action on $\CM_{\overline{r}, n}$. Equivalently, the  $\TT$-action   on $ \CM_{\overline{r}, n}$ is the restriction of the  $\TT$-action on the moduli space of non-commutative Quot scheme $ \CM^{\mathrm{nc}}_{\overline{r}, n}$ given on points by
 \[(t, w)\cdot (B_1, B_2,  I_1, I_2)=(t_1^{-1} B_1, t_2^{-1} B_2, \overline{w}_{1}^{-1}I_1, \overline{w}_{2}^{-1}I_{2}), \]
 where $w_{ij}$ are the coordinates of $\TT_1$, for $i=1, 2$ and $j=1\dots, r_i$ and we set
 \[
\overline{w}_{i}=\begin{pmatrix}
w_{i,1} & 0   & \cdots & 0   & 0 \\
0   & \ddots & \ddots & \vdots & 0 \\
\vdots & \ddots & \ddots & \ddots & \vdots \\
0   & \cdots & \ddots & \ddots & 0 \\
0   & 0   & \cdots & 0   & w_{i,r_i}
\end{pmatrix},
 \]
 for $i=1,2$.
\begin{remark}
    If $\overline{r}=(0, r)$, recall from \Cref{ex: smooth case} that $        \CM_{\overline{r}, n}\cong \Quot_{\BA^1_2}(\oO^{r}, n)$. In this case, the subtorus 
    \begin{align*}
        \TT_{0,2}=\langle t_2 \rangle\subset \TT_0
    \end{align*}
    acts trivially on the gauge origami moduli space   $\CM_{\overline{r}, n}$. However, its perfect obstruction theory is not $\TT_{0,2}$-trivial, as we will explicitly see in \Cref{prop: caso smooth line ug}. In particular, in this case the  $\TT_{0,2}$-action on $\CM_{\overline{r},n } $ is formal and therefore we have
    \begin{align*}
        K_\TT^0(\CM_{\overline{r},n } )\cong   K_{\TT/\TT_{0,2}}^0(\CM_{\overline{r},n } )\left[t_2^{\pm 1}\right].
    \end{align*}
\end{remark}
The torus fixed locus of the gauge origami moduli space   is classified by tuples of non-negative integers.
    \begin{prop}\label{prop: fixed locus reduced}
    Let $\overline{r}=(r_1, r_2)$ and $n\geq 0$. The $\TT$-fixed locus $\mathcal{M}_{\overline{r}, n}^\TT$ is reduced, zero-dimensional and in bijection with
    \[\set{\bn=(\bn_1, \bn_2)| \bn_i=(n_{i1},\dots, n_{ir_i}),  \mbox{ such that } n=\sum_{i=1}^2\sum_{j=1}^{r_i}n_{ij} \mbox{ and } n_{ij}\geq 0 \mbox{ for } i=1,2,\,\, j=1, \dots, r_i}.\]
    \end{prop}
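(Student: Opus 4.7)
The plan is to proceed in two stages: first classify the closed points of $\CM_{\overline{r},n}^\TT$ combinatorially, then use a tangent-weight computation to deduce reducedness and zero-dimensionality simultaneously.

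For the bijection on closed points, a $\TT$-fixed closed point of $\CM_{\overline{r},n}$ is a short exact sequence $0\to K\to\CE_{\overline{r}}\to Q\to 0$ admitting a compatible $\TT$-linearization. Because $\TT_1$ acts with pairwise distinct characters $w_{ij}$ on the $r_1+r_2$ summands of $\CE_{\overline{r}}=\bigoplus_{i,j}\iota_{i,*}\oO^{(j)}_{\BA^1_i}$, the $\TT_1$-equivariance of $K$ forces the splitting $K=\bigoplus_{i,j}K_{ij}$, with $K_{ij}$ an ideal sheaf inside the $(i,j)$-th summand. Each $K_{ij}$ is then a $\TT_0$-invariant ideal of the coordinate ring of $\BA^1_i$, on which $\TT_0$ acts through a single nontrivial character; consequently $K_{ij}$ must be a power of the maximal ideal at the origin, giving a quotient of length $n_{ij}\ge 0$. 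The length condition on $Q$ becomes $n=\sum_{i,j}n_{ij}$, identifying the closed points of $\CM_{\overline{r},n}^\TT$ with the claimed tuples.

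For the scheme structure, I will show that $(T_p\CM_{\overline{r},n})^\TT=0$ at every fixed point $p$; since in characteristic zero this invariant tangent space coincides with $T_p(\CM_{\overline{r},n}^\TT)$, this forces the fixed locus to be reduced and zero-dimensional. The standard Quot-scheme deformation theory gives $T_p\CM_{\overline{r},n}=\Hom_\CC(K,Q)$, which at a fixed point $\bn$ decomposes $\TT$-equivariantly as $\bigoplus_{(i,j),(i',j')}\Hom_\CC(K_{ij},Q_{i'j'})$. When $(i,j)\ne(i',j')$ the summand already carries the nontrivial $\TT_1$-weight $w_{ij}^{-1}w_{i'j'}$. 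The diagonal terms $(i,j)=(i',j')$ reduce to a direct computation on the smooth line $\BA^1_i$, producing weights which are strictly negative powers of the $\TT_0$-character acting on the coordinate of $\BA^1_i$, hence nontrivial. Finally, the cross-terms with $i\ne i'$ are handled through the identification $\Hom_R(F,G)\cong\Hom_\BC(F/x_{i'}F,\ker(x_i\colon G\to G))$ with $R=\BC[x_1,x_2]/(x_1x_2)$, $F$ an $R/(x_i)$-module and $G$ an $R/(x_{i'})$-module; the resulting Hom is one-dimensional, and its weight genuinely mixes $t_1,t_2$ together with two distinct framing variables, so it cannot be the identity character.

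The main technical point will be the cross-term Hom computation for $i\ne i'$: here the singularity of $\CC$ at the origin genuinely enters and forces a passage through the local model $\BC[x_1,x_2]/(x_1x_2)$ to see that the Hom is nonzero only because the two branches meet at the origin. Once this identification is in place, the weight bookkeeping is routine, and all remaining contributions reduce to standard computations on smooth affine lines.
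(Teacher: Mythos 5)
Your proof is correct, but it takes a genuinely different route from the paper's. The paper establishes reducedness by first identifying the $\TT_1$-fixed locus \emph{scheme-theoretically} (via Bifet's arguments) with a disjoint union of products $\prod_{i,j}\Quot_{\mathcal C}(\iota_{i,*}\oO_{\BA^1_i},n_{ij})$, identifying each factor with the smooth Hilbert scheme $\Hilb^{n_{ij}}(\BA^1_i)\cong \BA^{n_{ij}}$, and then using that the $\BC^*$-fixed locus of a smooth variety is smooth and consists here of the single monomial ideal $(x_i^{n_{ij}})$. You instead compute the full Zariski tangent space $T_{[K]}\CM_{\overline r,n}=\Hom_{\oO_{\mathcal C}}(K,Q)$ at each fixed point, show its $\TT$-invariant part vanishes, and conclude via $T_p(X^\TT)=(T_pX)^\TT$ and Nakayama that the fixed locus is a disjoint union of reduced points. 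Both arguments share the first step (the kernel splits along the pairwise distinct framing characters, and each summand is a monomial ideal on a line), but the mechanism for reducedness differs: smoothness of an intermediate fixed locus versus a direct weight computation on the honest (not virtual) tangent space. Your route has to confront the cross-branch terms $\Hom_R(F,G)$ over $R=\BC[x_1,x_2]/(x_1x_2)$, which the paper's product decomposition silently absorbs; your identification $\Hom_R(F,G)\cong\Hom_\BC\bigl(F/x_{i'}F,\ker(x_i\colon G\to G)\bigr)$ is correct, the resulting space is one-dimensional (zero when the target quotient is trivial), and its weight contains the nontrivial factor $w_{ij}^{-1}w_{i'j'}$, so it indeed contributes nothing to the invariant part -- note that for the off-diagonal terms the framing weights alone already rule out invariants, and only on the diagonal do you genuinely need the strictly negative $\TT_0$-weights. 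What the paper's approach buys is a description of the entire $\TT_1$-fixed locus, not just the $\TT$-fixed points; what yours buys is independence from the citation to Bifet and a computation that runs parallel to the $\TT$-movability check of the vertex term later in the paper.
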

    \begin{proof}
        By  the same arguments as in \cite{Bifet} its $\TT_1$-fixed locus is scheme-theoretically isomorphic to 
        \[
\mathcal{M}_{\overline{r}, n}^{\TT_1}\cong \coprod_{n=\sum_{i=1}^2\sum_{j=1}^{r_i}n_{ij}}\prod_{i=1}^2\prod_{j=1}^{r_i}\Quot_{\mathcal{C}}(\iota_{i, *}\oO_{\BA^1_i},n_{ij}).
        \]
    Moreover, we have an identification of each factor with a Hilbert scheme of points \[\Quot_{\mathcal{C}}(\iota_{i, *}\oO_{\BA^1_i},n_{ij})\cong \Hilb^{n_{ij}}(\BA_i^1), \] such that  the natural $\BC^*$-action on $\BA^1_i$ lifts on $\Hilb^{n_{ij}}(\BA_i^1)$ and coincides with the induced $\TT_0$-action on the left-hand-side.
The $ \BC^*$-fixed locus $  \Hilb^{n_{ij}}(\BA_i^1)^{\BC^*}$ consists of the single isolated reduced point corresponding to the monomial ideal
\[
I_{ij}=(x_i^{n_{ij}})\BC[x_i]\subset \BC[x_i].
\]
Therefore  
 $\mathcal{M}_{\overline{r}, n}^\TT$ is reduced and 0-dimensional, and  every $\TT$-fixed point in $ \mathcal{M}_{\overline{r}, n}^\TT$ corresponds to a quotient
        \[\bigoplus_{i=1}^2\bigoplus_{j=1}^{r_i}I_{ij}\hookrightarrow \bigoplus_{i=1}^2 \iota_{i,*}\oO_{\BA^1_i}^{r_i}\onto\bigoplus_{i=1}^2\bigoplus_{j=1}^{r_i}\oO_{Z_{ij}},\]
        where each $Z_{ij}\subset \BC^1_{i}$ is the unique 0-dimensional subscheme supported at the origin of length $n_{ij}$.
         Reversing the correspondence concludes the argument.
    \end{proof}
    Given a pair $\bn=(\bn_1, \bn_2)$ as in \Cref{prop: fixed locus reduced}, we denote the \emph{size} of $\bn$ by $\lvert\bn\rvert=\sum_{i=1}^2\sum_{j=1}^{r_i}n_{ij}$.
    \begin{corollary}
    Let $\overline{r}=(r_1, r_2)$.   The generating series of topological Euler characteristics is
        \[
        \sum_{n\geq 0}e(\mathcal{M}_{\overline{r}, n})     \cdot q^n =  \frac{1}{(1-q)^{r_1+r_2}}.
        \]
    \end{corollary}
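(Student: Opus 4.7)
The plan is to reduce the computation to a combinatorial counting problem via torus localisation. First, I would observe that the $\TT$-action on $\CM_{\overline{r},n}$ admits a generic one-parameter subgroup $\BC^*\hookrightarrow\TT$ whose fixed locus agrees with $\CM_{\overline{r},n}^{\TT}$, which is reduced and zero-dimensional by \Cref{prop: fixed locus reduced}. Since any $\BC^*$-orbit outside the fixed locus has vanishing topological Euler characteristic, and since this invariance holds for (not necessarily proper) varieties with a $\BC^*$-action, I would conclude
\[
e(\CM_{\overline{r},n})=e(\CM_{\overline{r},n}^{\TT})=\#\CM_{\overline{r},n}^{\TT}.
\]

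Next, by \Cref{prop: fixed locus reduced}, $\#\CM_{\overline{r},n}^{\TT}$ equals the number of tuples $\bn=(\bn_1,\bn_2)$ with $\bn_i\in\BZ_{\geq 0}^{r_i}$ and $|\bn|=n$. This is the number of weak compositions of $n$ into $r_1+r_2$ parts, which is $\binom{n+r_1+r_2-1}{r_1+r_2-1}$.

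Finally, summing the generating series yields
\[
\sum_{n\geq 0}e(\CM_{\overline{r},n})\cdot q^n=\sum_{n\geq 0}\binom{n+r_1+r_2-1}{r_1+r_2-1}q^n=\frac{1}{(1-q)^{r_1+r_2}},
\]
by the standard binomial identity, as desired.

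The only non-routine step is justifying the first equality: one has to ensure that the $\BC^*$-action invariance of the Euler characteristic applies in this non-proper setting. This is standard for quasi-projective varieties with a torus action possessing isolated fixed points, but I would explicitly cite the relevant statement (e.g.\ as in arguments of Bialynicki-Birula type, used repeatedly in the enumerative geometry of Quot schemes) to make the reduction rigorous.
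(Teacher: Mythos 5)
Your proposal is correct and follows the same route as the paper: reduce $e(\CM_{\overline{r},n})$ to the count of $\TT$-fixed points via the standard invariance of the topological Euler characteristic under torus actions, then enumerate the tuples of \Cref{prop: fixed locus reduced} as weak compositions of $n$ into $r_1+r_2$ parts. The paper's proof is just a one-line version of this argument, so your extra care with the binomial identity and the non-properness caveat only makes explicit what the paper leaves implicit.
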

    \begin{proof}
        The topological Euler characteristic of a $\BC$-scheme  coincides with the one of its $\TT$-fixed locus, therefore the result follows by Proposition \ref{prop: fixed locus reduced}.
    \end{proof}
\subsection{Virtual invariants}\label{sec:invariants}
For any  $\overline{r}=(r_1, r_2)$ and $n\in \BZ_{\geq 0}$, denote by $t_1, t_2$ the irreducible characters of the $\TT_0$-action on $\mathcal{M}_{\overline{r}, n}$ and by $w_{ij}$ the irreducible characters of the $\TT_1$-action, where $i=1, 2$ and $j=1,\dots r_i$. Since all ingredients in Theorem \ref{thm: isotropic construction} are naturally $\TT$-equivariant, the virtual structure sheaf $ \oO^{\vir}_{\mathcal{M}_{\overline{r}, n}}$ naturally lifts to a $\TT$-equivariant class in equivariant $K$-theory. We define the $\TT$-equivariant $K$-theoretic invariants
\begin{align}\label{eqn: invariants}
\CZ_{\overline{r},n}=\chi\left(\mathcal{M}_{\overline{r}, n}, \oO^{\vir}_{\mathcal{M}_{\overline{r}, n}}\right)\in \BQ(t_1, t_2, w).
\end{align}
Since $\CM_{\overline{r}, n}$ is not proper, one cannot directly define $K$-theoretic invariants via proper push-forward in $K$-theory. Instead,  since the fixed locus $\CM_{\overline{r}, n}^\TT $ is proper, we define invariants as the composition
\begin{align*}
  \chi\left(\mathcal{M}_{\overline{r}, n}, \cdot\right):  K^{\TT}_0\left(\CM_{\overline{r}, n}\right)\to K^{\TT}_0(\CM_{\overline{r}, n})_{\mathrm{loc}}\xrightarrow{\sim} K^{\TT}_0(\CM^{\TT}_{\overline{r}, n})_{\mathrm{loc}}\to K_0^\TT(\pt)_{\mathrm{loc}}.
\end{align*}
Here, the first map is a suitable localisation of $ K^{\TT}_0\left(\CM_{\overline{r}, n}\right)$, the second map is  Thomason's abstract localisation \cite{Tho:formule_Lefschetz} and the third map is the proper pushforward on  $\CM_{\overline{r}, n}^\TT $ (extended to the localisation).

\smallbreak

We define the \emph{gauge origami partition function} as the generating series
\begin{align*}
    \CZ_{\overline{r}}(q)=\sum_{n\geq 0} \CZ_{\overline{r},n}\cdot q^n \in \BQ(t_1, t_2, w)[\![q]\!].
\end{align*}
Let $T_{\CM_{\overline{r}, n}}^{\vir}$ be the \emph{virtual tangent bundle} of $\mathcal{M}_{\overline{r}, n}$, that is  the dual of the obstruction theory $\BE$ in \eqref{eqn: obs th}
\begin{align*}
    T_{\CM_{\overline{r}, n}}^{\vir}=[ \CV|_{\mathcal{M}_{\overline{r}, n}}\to {T^*_{\CA}}|_{\mathcal{M}_{\overline{r}, n}}].
\end{align*}
We will denote by $ T_{\CM_{\overline{r}, n}}^{\vir}$ also its class in $K$-theory, whenever it is  clear from the context.

By the virtual localisation theorem in K-theory  \cite{FG_riemann_roch, Qu_virtual_pullback} we can compute the invariants via $\TT$-equivariant residues on the fixed locus
\begin{align}\label{eqn: loc K}
    \chi\left(\mathcal{M}_{\overline{r}, n}, \oO^{\vir}_{\mathcal{M}_{\overline{r}, n}}\right)=\sum_{\lvert\bn\rvert=n}\frac{1}{\mathfrak{e}(T_{\bn}^{\vir})},
\end{align}
where the sum runs over all pairs $\bn=(\bn_1, \bn_2)$ of size $n$ as in \Cref{prop: fixed locus reduced}, $ T_{\bn}^{\vir}$ is the virtual tangent bundle at the fixed point corresponding to $\bn$ via the correspondence of \Cref{prop: fixed locus reduced} and $\mathfrak{e}$ is the equivariant analogue of \eqref{eqn: K th EUler}. More precisely, the operator $\mathfrak{e}$ acts on a virtual $\TT$-representation as 
\[
\mathfrak{e}\left(\sum_{\mu}t^{\mu}-\sum_{\nu}t^{\nu}\right)=\frac{\prod_\mu(1-t^{-\mu})}{\prod_\nu(1-t^{-\nu})},
\]
where $t^{\mu}, t^{\nu}$ are irreducible $\TT$-representations such that no $t^{\nu}$ are  the trivial representation.

In particular,  we implicitly   used in \eqref{eqn: loc K} the fact that the virtual tangent bundle is $\TT$-movable by Proposition \ref{prop: t movable}. Summing over all $n$, we express the gauge origami partition function as
\begin{align*}
       \CZ_{\overline{r}}(q)=\sum_{\bn}\mathfrak{e}(-T_{\bn}^{\vir})\cdot q^{\lvert\bn\rvert}.
\end{align*}
\subsection{Vertex formalism}
Let $r=(r_1, r_2)$ and $n\geq 0$,  and let  $\bn$ correspond to a $\TT$-fixed quotient   $[\CE_{\overline{r}}\onto Q_{\bn}]\in \CM_{\overline{r}, n}^\TT$, with notation as in \Cref{prop: fixed locus reduced}. We denote by $T^{\vir}_{\bn}$ the virtual tangent space at the fixed point $\bn$. Recall that $Q_{\bn}$ decomposes as 
\[Q_{\bn}=\bigoplus_{i=1}^2\bigoplus_{j=1}^{r_i}\oO_{Z_{{n_{ij}}}}, \]
where each $Z_{n_{ij}}\subset\BA^1_i$ is the unique 0-dimensional closed subscheme of length $n_{ij}$.  We keep denoting their  global sections by $Z_{n_{ij}}$ (resp. $Q_{\bn}$)  which  are given, as a $\TT_0$ (resp. $\TT$)-representation, by
\begin{align}\label{eqn: decomposition of Q}
\begin{split}
    Z_{n_{ij}}&=\sum_{a=0}^{n_{ij}-1}t_{\hat{i}}^{a},  \quad i=1, 2, \quad j=1, \dots, r_i, \\
Q_{\bn_i}&=\sum_{j=1}^{r_i}w_{ij}Z_{n_{ij}}, \quad i=1, 2,\\
Q_{\bn}&=\sum_{i=1}^2Q_{\bn_i},
\end{split}
\end{align}
where, to ease the notation, we adopt the convention for the indices $\set{i, \hat{i}}=\set{1,2}$. We denote by $\overline{(\cdot)}$ the involution on $K^0_\TT(\pt)$ which acts as $\overline{t^\mu}=t^{-\mu}$ on irreducible representations.
Finally, set the $\TT_1$-representations
\begin{align*}
    K_i&=\sum_{j=1}^{r_i}w_{ij}, \quad i=1,2,\\
    K&=\sum_{i=1}^2 K_i.
\end{align*}
\begin{prop}\label{prop: t vir}
    We have an identification of virtual  $\TT$-representations
    \begin{align*}        T^{\vir}_{\bn}= \sum_{i=1}^2\overline{K_i}(1-t^{-1}_i) Q_{\bn}-(1-t_1^{-1})(1-t_2^{-1})Q_{\bn}\overline{Q_{\bn}}.
    \end{align*}
\end{prop}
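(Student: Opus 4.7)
The plan is to compute $T^{\vir}_{\bn}$ by combining \Cref{thm: isotropic construction} with the standard weight analysis of tangent spaces of quiver moduli at torus-fixed points. Set $Q=Q_{\bn}$ for brevity. Since $\CM_{\overline{r},n}$ is globally cut out of $\CM^{\nc}_{\overline{r},n}$ as the zero locus of the section $s$ of $\CV$, the induced obstruction theory \eqref{eqn: obs th} yields the virtual tangent bundle as
\[
T^{\vir}_{\CM_{\overline{r},n}}=T_{\CM^{\nc}_{\overline{r},n}}-\CV
\]
as a class in $K$-theory, and so it suffices to identify the two restrictions $T_{\CM^{\nc}}|_{\bn}$ and $\CV|_{\bn}$ as virtual $\TT$-representations and subtract.

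For $T_{\CM^{\nc}}|_{\bn}$, the non-commutative Quot scheme is the free $\GL(V)$-quotient of $\mathsf U_{\overline{r},n}\subset \mathsf R_{\overline{r},n}$, so the tangent space at the fixed point fits into the short exact sequence of $\TT$-representations
\[
0\to \End(V)\to T_{\mathsf R}|_{\bn}\to T_{\CM^{\nc}}|_{\bn}\to 0,
\]
where $\End(V)\cong Q\overline{Q}$ accounts for the infinitesimal $\GL(V)$-direction, after linearising $V$ as the $\TT$-representation $Q$ and each $W_i$ as $K_i$. The weights on $T_{\mathsf R}|_{\bn}$ are read off from the $\TT$-action of \Cref{sec: ytorus action}: each coordinate $B_i$ transforms with weight $t_i^{-1}$, so its direction contributes $t_i^{-1}Q\overline{Q}$, while the framing coordinate $I_i$ absorbs the matrix $\overline{w}_i^{-1}$ into the natural weight structure $\overline{K_i}Q$ of $\Hom(W_i,V)$. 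Assembling and quotienting by $\End(V)$ yields
\[
T_{\CM^{\nc}}|_{\bn}=(t_1^{-1}+t_2^{-1}-1)\,Q\overline{Q}+(\overline{K_1}+\overline{K_2})\,Q.
\]

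For $\CV|_{\bn}$, the $\TT$-weights are forced by the equivariance of the section $s=(e_1\wedge e_2)\otimes [B_1B_2]+\sum_i B_iI_i$ constructed in the proof of \Cref{thm: isotropic construction}. The summand $\Lambda^2\BC^2\otimes \End(V)$ hosts $[B_1B_2]$, which has $\TT$-weight $t_1^{-1}t_2^{-1}$; consistency forces the formal factor $\Lambda^2\BC^2$ to be the one-dimensional character $t_1^{-1}t_2^{-1}$, contributing $t_1^{-1}t_2^{-1}Q\overline{Q}$. The summand $\Hom(W_i,V)$ hosts $B_iI_i$, which carries an additional coordinate weight $t_i^{-1}$ on top of the natural representation $\overline{K_i}Q$, contributing $t_i^{-1}\overline{K_i}Q$. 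Altogether
\[
\CV|_{\bn}=t_1^{-1}t_2^{-1}\,Q\overline{Q}+t_1^{-1}\overline{K_1}\,Q+t_2^{-1}\overline{K_2}\,Q.
\]

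Subtracting and using the elementary identity $t_1^{-1}+t_2^{-1}-1-t_1^{-1}t_2^{-1}=-(1-t_1^{-1})(1-t_2^{-1})$ collects the contributions into
\[
T^{\vir}_{\bn}=\sum_{i=1}^2\overline{K_i}(1-t_i^{-1})\,Q-(1-t_1^{-1})(1-t_2^{-1})\,Q\overline{Q},
\]
which is the claimed formula. The main obstacle is the careful bookkeeping of the $\TT$-weights at the fixed point: disentangling the coordinate weight $t_i^{-1}$ carried by $B_i$ from the linearisation weight on $V$, and recognising that the formal factor $\Lambda^2\BC^2$ must be read as the character $t_1^{-1}t_2^{-1}$ that renders $s$ equivariant. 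Once these conventions are pinned down, both $T_{\CM^{\nc}}|_{\bn}$ and $\CV|_{\bn}$ are forced, and the rest of the proof is algebra.
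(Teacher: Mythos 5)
Your computation is correct and follows essentially the same route as the paper: both identify $T^{\vir}_{\bn}=T_{\CM^{\nc}_{\overline{r},n}}|_{\bn}-\CV|_{\bn}$, compute the first term via the exact sequence quotienting $T_{\mathsf R}|_{\bn}$ by the $\GL(V)$-direction $\End(V)\cong Q\overline{Q}$ with $\BC^2=t_1^{-1}+t_2^{-1}$, read off $\CV|_{\bn}$ from the weights $\Lambda^2\BC^2=t_1^{-1}t_2^{-1}$ and $\Hom(K_i\cdot t_i,Q)=\overline{K_i}t_i^{-1}Q$, and subtract. No issues.
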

\begin{proof}
    The virtual tangent space at a $\TT$-fixed point corresponding to a tuple $\bn$ is computed as a class in $K$-theory by
    \begin{align}\label{eqn: virtual complex}
T^{\vir}_{\bn}=T_{\CM^{\mathrm{nc}}_{\overline{r}, n} }|_{\bn}-\CV|_{\bn}\in K_\TT^0(\pt).
    \end{align}
    The tangent space $T_{\CM^{\mathrm{nc}}_{\overline{r}, n} }|_{\bn} $ at the fixed quotient corresponding to  $\bn\in \CM^{\mathrm{nc}}_{\overline{r}, n} $ sits in an exact sequence of $\TT$-representations
    \begin{align*}
        0\to \Hom(Q_{\bn}, Q_{\bn})\to \BC^2\otimes \Hom(Q_{\bn},Q_{\bn})\oplus \bigoplus_{i=1}^2\bigoplus_{j=1}^{r_i}\Hom(\BC \cdot w_{ij}, Q_{\bn})\to T_{\CM^{\mathrm{nc}}_{\overline{r}, n} }|_{\bn}\to 0,
    \end{align*}
    where the middle representation  is the tangent space of the affine space $\mathsf R_{\overline r,n} $ and the first representation records the free action of $\GL(\BC^n)$. The weight decomposition of $\BC^2$ is
    \[\BC^2=t_1^{-1}+t_2^{-1},\]
   by which one readily computes in $K$-theory
    \[
    T_{\CM^{\mathrm{nc}}_{\overline{r}, n} }|_{\bn}=(t_1^{-1}+t_2^{-1}-1)Q_{\bn}\overline{Q_{\bn}}+ \overline{K}Q_{\bn}.
    \]
   The vector bundle $\CV$ at the point $\bn$ satisfies
   \[
\CV|_{\bn}=\Lambda^2\BC^2\otimes \Hom(Q_{\bn}, Q_{\bn})\oplus\bigoplus_{i=1}^2\Hom(K_i\cdot t_i, Q_{\bn}),
   \]
   where the weight decomposition of $ \Lambda^2\BC^2$ is 
   \[
   \Lambda^2\BC^2=t_1^{-1}t_2^{-1},
   \]
   which implies that 
   \[
   \CV|_{\bn}=t_1^{-1}t_2^{-1}Q_{\bn}\overline{Q_{\bn}}+\sum_{i=1}^2\overline{K_i}t^{-1}_i Q_{\bn}.
   \]
   Plugging the computations in \eqref{eqn: virtual complex} concludes the proof.
\end{proof}
From now on, we will refer to $T^{\vir}_{\bn}$ as the \emph{vertex term} associated to $\bn$. 
\subsubsection{Factorisation} The vertex term $T^{\vir}_{\bn}$ can be further expressed as a sum
\begin{align*}
T^{\vir}_{\bn}&=\sum_{i,j=1}^2\mathsf{v}^{(ij)}_{\bn},\\
\mathsf{v}^{(ij)}_{\bn}&=
\overline{K_i}(1-t_i^{-1}) Q_{\bn_j}-(1-t_1^{-1})(1-t_2^{-1}) \overline{Q_{\bn_i}} Q_{\bn_j}, \quad i=1,2.
\end{align*}
By \eqref{eqn: decomposition of Q} we can further identify each terms $ \mathsf{v}^{(ij)}_{\bn}$ as sums
\begin{align*}
\mathsf{v}^{(ij)}_{\bn}&=\sum_{\substack{1\leq \alpha\leq r_i\\ 1\leq \beta\leq r_j}}\mathsf{v}^{(ij, \alpha\beta)}_{\bn},\quad i,j=1,2,\\
    \mathsf{v}^{(ij, \alpha\beta)}_{\bn}&=
     w_{i\alpha}^{-1}w_{j\beta}\left((1-t_i^{-1})Z_{n_{j\beta}}-(1-t_1^{-1})(1-t_2^{-1})\overline{Z_{n_{i\alpha}}}Z_{n_{j\beta}}\right), \quad \alpha=1, \dots, r_i, \quad \beta=1, \dots, r_j.
\end{align*}
As already anticipated, the vertex term is $\TT$-movable, which in particular implies that the rational function $\mathfrak{e}(-T^{\vir}_{\bn})$ is well-defined and non-zero.
\begin{prop}\label{prop: t movable}
    The vertex term $T^{\vir}_{\bn}$ is $\TT$-movable.
\end{prop}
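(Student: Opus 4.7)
The plan is to exploit the refined decomposition $T^{\vir}_{\bn}=\sum_{i,j,\alpha,\beta}\mathsf{v}^{(ij,\alpha\beta)}_{\bn}$ already recorded before the statement, and to show $\TT$-movability one summand at a time. A character in $K^0_\TT(\pt)$ is movable iff every monomial $t_1^{a}t_2^{b}w^{\mu}$ occurring in it has $(a,b,\mu)\neq 0$, so we may separate the analysis according to the pure $w$-weight $w_{i\alpha}^{-1}w_{j\beta}$ which multiplies $\mathsf{v}^{(ij,\alpha\beta)}_{\bn}$.

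\textbf{Off-diagonal piece.} If $(i,\alpha)\neq (j,\beta)$, the prefactor $w_{i\alpha}^{-1}w_{j\beta}$ is a non-trivial character of $\TT_1$. Every monomial appearing in $\mathsf{v}^{(ij,\alpha\beta)}_{\bn}$ then carries a non-trivial $\TT_1$-weight, hence is $\TT$-movable. No calculation is needed here, only the observation that $\TT_1$ acts independently of $\TT_0$.

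\textbf{Diagonal piece.} The remaining contributions are $\mathsf{v}^{(ii,\alpha\alpha)}_{\bn}$ for $i=1,2$ and $1\le\alpha\le r_i$, and here the $w$-prefactor is trivial, so movability becomes a statement about $\TT_0$. By symmetry in the indices $i$ and $\hat i$, it suffices to treat $i=1$. Writing $n=n_{1\alpha}$ and $Z=Z_n=\sum_{a=0}^{n-1}t_2^{a}$, one computes directly that
\[
(1-t_2^{-1})\overline{Z}Z=\sum_{c=0}^{n-1}t_2^{c}-\sum_{c=1}^{n}t_2^{-c}=Z-\sum_{c=1}^{n}t_2^{-c},
\]
a telescoping identity which follows by expanding $\overline{Z}Z=\sum_{c=-(n-1)}^{n-1}(n-|c|)t_2^{c}$ and multiplying through. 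Substituting into the definition of $\mathsf{v}^{(11,\alpha\alpha)}_{\bn}$ the terms $Z$ cancel, and one is left with the clean expression
\[
\mathsf{v}^{(11,\alpha\alpha)}_{\bn}=(1-t_1^{-1})\sum_{c=1}^{n}t_2^{-c},
\]
whose weights are the non-trivial characters $t_2^{-c}$ and $-t_1^{-1}t_2^{-c}$ for $c=1,\dots,n$.

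\textbf{Main obstacle.} The only delicate point is the cancellation in the diagonal piece: naively one might fear that contributions like the trivial character in $\overline{Z}Z$ survive in $\mathsf{v}^{(ii,\alpha\alpha)}_{\bn}$. The telescoping identity above shows that the diagonal of the quadratic term $(1-t_2^{-1})\overline{Z}Z$ exactly cancels the linear $Z$ term up to a \emph{purely negative} Laurent polynomial in $t_2$; this is what rules out the trivial weight. With this in hand, summing over $i,j,\alpha,\beta$ assembles all contributions into $\TT$-movable ones, and the proposition follows.
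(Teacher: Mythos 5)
Your proof is correct and follows exactly the paper's route: decompose $T^{\vir}_{\bn}$ into the terms $\mathsf{v}^{(ij,\alpha\beta)}_{\bn}$, observe that off-diagonal terms carry the non-trivial framing weight $w_{i\alpha}^{-1}w_{j\beta}$, and reduce the diagonal terms to $(1-t_i^{-1})\sum_{a=1}^{n_{i\alpha}}t_{\hat i}^{-a}$, which has no trivial weight. Your telescoping computation of $(1-t_2^{-1})\overline{Z}Z$ is a correct and more explicit account of the ``simple algebraic manipulation'' the paper leaves to the reader.
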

\begin{proof}
By the decomposition of the vertex term as
\begin{align}\label{eqn: decomposition vertex}
T^{\vir}_{\bn}=\sum_{i,j=1}^2\sum_{\substack{1\leq \alpha\leq r_i\\ 1\leq \beta\leq r_j}}\mathsf{v}^{(ij, \alpha\beta)}_{\bn},
\end{align}
it is clear that the possible contributions to the $\TT$-fixed part have to come from the    \emph{diagonal} terms $\mathsf{v}^{(ii, \alpha\alpha)}_{\bn}$.  A simple algebraic manipulation yields the identity
\begin{align*}
    \mathsf{v}^{(ii, \alpha\alpha)}_{\bn}=(1-t_i^{-1})\sum_{a=1}^{n_{i\alpha}}t_{\hat{i}}^{-a},
\end{align*}
where we set $\set{i, \hat{i}}=\set{1,2}$, 
which confirms that $  \mathsf{v}^{(ii, \alpha\alpha)}_{\bn}$ is $\TT$-movable.
\end{proof}
\subsection{Quot scheme of a smooth line}\label{sec: smooth}  Recall from \Cref{ex: smooth case} that in the case $\overline{r}=(0, r)$ the gauge origami moduli space  $\CM_{\overline{r}, n}$ ($\TT$-equivariantly) coincides with the usual smooth Quot scheme $\Quot_{\BA_2^1}(\oO^{r}, n)$. In this case, we can express the induced $\TT$-equivariant virtual cycles on $\CM_{\overline{r}, n}$ more explicitly.

Let $y$ be a variable and $V$ a vector bundle on a scheme $X$. We set
\[
\Lambda_y V=\sum_{i\geq 0} y^i \Lambda^i V\in K^0(X)[y].
\]
\begin{prop}\label{prop: caso smooth line ug}
    Let $\overline{r}=(0,r)$ and $n\geq 0$. Then we have an identity
    \begin{align*}
        T^{\vir}_{\CM_{\overline{r}, n}}=T_{\CM_{\overline{r},n }}-t_2^{-1}\cdot T_{\CM_{\overline{r},n }}\in K_{\TT/\TT_{0,2}}^0(\CM_{\overline{r},n } )\left[t_2^{\pm 1}\right], 
    \end{align*}
    where $T_{\CM_{\overline{r},n }}$ is the tangent bundle of $ \CM_{\overline{r},n }$. In particular, we have that
    \begin{align*}
        \oO^{vir}_{\CM_{\overline{r},n }}=\Lambda_{-t_2}T_{\CM_{\overline{r},n }}^*\in K^{\TT/\TT_{0,2}}_0(\CM_{\overline{r},n } )\left[t_2\right].
    \end{align*}
\end{prop}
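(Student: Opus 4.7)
The plan is to verify the identity fibrewise at the $\TT$-fixed points of \Cref{prop: fixed locus reduced}, globalise via the universal sheaf, and then deduce the virtual structure sheaf formula by a standard excess-intersection manipulation.

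First, specialising \Cref{prop: t vir} to $r_1=0$ (so that $K_1=0$ and $Q_{\bn_1}=0$) gives the fibrewise factorisation
\begin{equation*}
T^{\vir}_{\bn}=(1-t_2^{-1})\bigl[\overline{K_2}\,Q_{\bn}-(1-t_1^{-1})Q_{\bn}\overline{Q_{\bn}}\bigr].
\end{equation*}
On the other hand, since $\CM_{\overline{r},n}\cong\Quot_{\BA^1_2}(\oO^r,n)$ is smooth, its tangent representation at $\bn$ is $T_{\CM_{\overline{r},n}}|_{\bn}=\Hom(\CK_{\bn},Q_{\bn})=\chi(\oO^r,Q_{\bn})-\chi(Q_{\bn},Q_{\bn})$. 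The first term equals $\overline{K_2}\,Q_{\bn}$, and the second, by the equivariant Koszul formula for torsion sheaves on $\BA^1_2$ (whose coordinate $x_1$ carries weight $t_1$), equals $(1-t_1^{-1})Q_{\bn}\overline{Q_{\bn}}$. Thus the bracket above coincides with $T_{\CM_{\overline{r},n}}|_{\bn}$, and so $T^{\vir}_{\bn}=(1-t_2^{-1})T_{\CM_{\overline{r},n}}|_{\bn}$ as $\TT$-representations.

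To promote this to a global identity in $K^0_{\TT/\TT_{0,2}}(\CM_{\overline{r},n})[t_2^{\pm 1}]$, introduce the projection $\pi\colon\CM_{\overline{r},n}\times\BA^1_2\to\CM_{\overline{r},n}$ and denote by $\CQ$ the universal quotient sheaf. The restriction of the tautological rank-$n$ bundle on $\CM^{\nc}_{\overline{r},n}$ along the embedding $\CM_{\overline{r},n}\hookrightarrow\CM^{\nc}_{\overline{r},n}$ is canonically identified with $\CQ_{\CM}\defeq\pi_{*}\CQ$; the same computation as in the proof of \Cref{prop: t vir}, performed with the tautological bundle in place of the fibre $Q_{\bn}$ (and keeping the $t_i^{-1}$-twists on the framing summands of $\CV$ required for the equivariance of the section $s$), produces
\begin{equation*}
T^{\vir}_{\CM_{\overline{r},n}}=(1-t_2^{-1})\bigl[\overline{K_2}\,\CQ_{\CM}-(1-t_1^{-1})\End(\CQ_{\CM})\bigr].
\end{equation*}
The relative Koszul identity $R\pi_{*}\RHom(\CQ,\CQ)=(1-t_1^{-1})\End(\CQ_{\CM})$ then identifies the bracket with $T_{\CM_{\overline{r},n}}$, establishing the first claim.

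For the last claim, $T^{\vir}=T-t_2^{-1}T$ says that the excess obstruction bundle attached to the zero-locus embedding of \Cref{thm: isotropic construction} is $\Obs=t_2^{-1}T_{\CM_{\overline{r},n}}$. The standard expression for the virtual structure sheaf in the excess-intersection setting reads $\oO^{\vir}=\mathfrak{e}(\Obs)=\sum_{i\geq 0}(-1)^{i}\Lambda^{i}\Obs^{*}$, which upon substituting $\Obs^{*}=t_2\,T^{*}_{\CM_{\overline{r},n}}$ becomes $\sum_{i\geq 0}(-t_2)^{i}\Lambda^{i}T^{*}_{\CM_{\overline{r},n}}=\Lambda_{-t_2}T^{*}_{\CM_{\overline{r},n}}$. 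The main obstacle is the relative Koszul identity $R\pi_{*}\RHom(\CQ,\CQ)=(1-t_1^{-1})\End(\CQ_{\CM})$ in $\TT$-equivariant $K$-theory, together with the careful bookkeeping of the $t_i^{-1}$-twists in $\CV$ forced by equivariance of $s$; once these are settled, all remaining manipulations are purely formal.
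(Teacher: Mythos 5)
Your proposal is correct and follows essentially the same route as the paper: specialise \Cref{prop: t vir} to $r_1=0$ to factor $T^{\vir}_{\bn}=(1-t_2^{-1})T_{\CM_{\overline{r},n}}|_{\bn}$ at the fixed points, and then read off $\oO^{\vir}=\mathfrak{e}(t_2^{-1}T_{\CM_{\overline{r},n}})=\Lambda_{-t_2}T^*_{\CM_{\overline{r},n}}$. The only difference is bookkeeping: where you globalise via the universal quotient and a relative Koszul identity (which you leave as an ``obstacle''), the paper simply invokes Thomason localization to reduce the global identity to the fixed-point check, and for the second claim it appeals to the fact that the virtual structure sheaf depends only on the $K$-theory class of the perfect obstruction theory rather than identifying the obstruction bundle itself.
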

\begin{proof}
    By Thomason localization theorem in $K$-theory \cite{Tho:formule_Lefschetz}, we just need to check the claimed identity at the level of $\TT$-fixed points. Exploiting the standard quiver description of  $\Quot_{\BA_2^1}(\oO^{r}, n)$,  the tangent space of  $ \CM_{\overline{r},n }$ at the fixed point corresponding to $\bn$ can be equivariantly expressed as 
    \begin{align*}
T_{\CM_{\overline{r},n }}=  \overline{K_2} Q_{\bn}-(1-t_1^{-1})Q_{\bn}\overline{Q_{\bn}},     
    \end{align*}
    from which the first claim follows. For the second claim, since $\CM_{\overline{r},n }$ is smooth, we can identify it as the zero locus of the zero section of the $\TT$-equivariant vector bundle $ t_2^{-1}\cdot T_{\CM_{\overline{r},n }}$. The dual of the induced perfect obstruction theory on $\CM_{\overline{r},n }$  has class  $T_{\CM_{\overline{r},n }}-t_2^{-1}\cdot T_{\CM_{\overline{r},n }}$ in $K$-theory.  By \cite{Tho_K-theo_Fulton}, the induced virtual structure sheaf only depends on the class in $K$-theory of the perfect obstruction theory, which implies
    \begin{equation}\label{eqn: y deformation}
    \begin{split}
  \oO^{vir}_{\CM_{\overline{r},n }}&=\mathfrak{e}\left(t_2^{-1}\cdot T_{\CM_{\overline{r},n }}\right)\\
 &= \Lambda_{-t_2}T_{\CM_{\overline{r},n }}^*.    
    \end{split}
    \end{equation}
\end{proof}
As a corollary of \Cref{prop: caso smooth line ug}, we obtain that for $\overline{r}=(0, r)$ the $K$-theoretic invariant $\CZ_{\overline{r},n} $ coincides with the $\TT$-equivariant \emph{$\chi_{-y}$}-genus of $\CM_{\overline{r},n }$, where the role of $y$ is played by the equivariant parameter $t_2$. 
\begin{corollary}\label{cor: chi y}
   Let $\overline{r}=(0, r)$ and $n\geq 0$. Then there is an identity 
\begin{align*}
\CZ_{\overline{r},n}=\chi(\CM_{\overline{r},n },  \Lambda_{-t_2}T_{\CM_{\overline{r},n }}^*).   
\end{align*}
\end{corollary}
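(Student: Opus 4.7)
The plan is to reduce the corollary to a direct substitution, using \Cref{prop: caso smooth line ug} together with the definition of the $K$-theoretic invariants in \Cref{sec:invariants}. The only thing to verify beyond the proposition is that the integrated $K$-theoretic pairing on the right-hand side makes sense $\TT$-equivariantly even though $\CM_{\overline{r},n}$ is not proper, so one must unpack how the proper pushforward is defined in \eqref{eqn: loc K} and check that it agrees with the one encoded in $\CZ_{\overline{r},n}$.

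Concretely, I would proceed as follows. First, by \eqref{eqn: invariants}, we have $\CZ_{\overline{r},n}=\chi(\CM_{\overline{r},n},\oO^{\vir}_{\CM_{\overline{r},n}})$, where the Euler characteristic is defined via $\TT$-equivariant localisation to the proper fixed locus $\CM_{\overline{r},n}^{\TT}$, as explained after \eqref{eqn: invariants}. Next, by \Cref{prop: caso smooth line ug}, for $\overline{r}=(0,r)$ one has the identity
\[
\oO^{\vir}_{\CM_{\overline{r},n}}=\Lambda_{-t_2}T_{\CM_{\overline{r},n}}^{*}
\]
in $K^{\TT/\TT_{0,2}}_0(\CM_{\overline{r},n})[t_2]$, lifted to a class in $K_0^{\TT}(\CM_{\overline{r},n})$ via the triviality of the $\TT_{0,2}$-action. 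Substituting this identity into the definition of $\CZ_{\overline{r},n}$ immediately yields the claimed equality.

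The only mild subtlety I anticipate is bookkeeping with the torus: the right-hand side of the statement is phrased as the Euler characteristic of the $\chi_{-t_2}$-genus complex, which is a bounded class in $K^0_{\TT}(\CM_{\overline{r},n})[t_2]$, and one has to confirm that applying the localised pushforward to this class gives the same rational function in $(t_1,t_2,w)$ as in \eqref{eqn: loc K}. This follows from the fact that localisation is compatible with the $t_2$-grading and from \Cref{prop: t movable} (applied in the smooth case) ensuring non-vanishing of the fixed-point denominators. Beyond this, no new input is required, so the ``proof'' is really just a one-line deduction from \Cref{prop: caso smooth line ug}.
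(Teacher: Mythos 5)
Your proposal is correct and is essentially the paper's own (implicit) argument: the corollary is stated as an immediate consequence of \Cref{prop: caso smooth line ug}, obtained by substituting the identity $\oO^{\vir}_{\CM_{\overline{r},n}}=\Lambda_{-t_2}T^*_{\CM_{\overline{r},n}}$ into the definition \eqref{eqn: invariants} of $\CZ_{\overline{r},n}$. Your additional remarks on the well-definedness of the localised pushforward are harmless bookkeeping already covered by the paper's setup in \Cref{sec:invariants} and \Cref{prop: t movable}.
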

\subsection{Framing limits}\label{sec:framing}
We prove in this section Theorem \ref{thm: explicit intro} from the Introduction.  We first show that the partition function $\CZ_{\overline{r}}(q)$ does not depend on the framing parameters $w_{ij}$,  exploiting a suitable rigidity argument. Granting this independence, we  specialise the framing parameters $w_{ij}$ to arbitrary values and send them to infinity, which recovers the formula in Theorem \ref{thm: explicit intro}.

\begin{theorem}\label{thm: framinh independence}
    The gauge origami partition function $\CZ_{\overline{r}}(q)$ does not depend on the weights $w_{i\alpha}$, for $i=1, 2$ and $\alpha=1, \dots, r_i$.
\end{theorem}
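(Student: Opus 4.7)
\begin{proofof}{\Cref{thm: framinh independence}}
My plan is to prove independence by a $K$-theoretic rigidity argument, built on the properness of the Quot-to-Chow morphism $\rho:\CM_{\overline r,n}\to\Sym^n\CC$ from \eqref{eqn: QTC}, together with the triviality of the $\TT_1$-action on $\Sym^n\CC$ (as $\TT_1$ merely rescales the fibers of $\CE_{\overline r}$).

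The first step is to single out the punctual fiber $F_n:=\rho^{-1}(n\cdot 0)\subset\CM_{\overline r,n}$. It is proper, since $\rho$ is, and $\TT$-invariant, since the point $n\cdot 0\in\Sym^n\CC$ is $\TT$-fixed ($\TT_0$ contracts $\CC$ to the origin and $\TT_1$ acts trivially on $\Sym^n\CC$). Moreover, by \Cref{prop: fixed locus reduced}, every $\TT$-fixed quotient of $\CE_{\overline r}$ is supported at the origin of $\CC$, so $(\CM_{\overline r,n})^\TT=F_n^\TT$.

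The second step is the reduction of $\CZ_{\overline r,n}$ to a proper Euler characteristic on $F_n$. Combining $\TT_0$-equivariant localization on $\Sym^n\CC$ (whose $\TT_0$-fixed locus is the reduced point $\{n\cdot 0\}$) with the projection formula for the proper $\TT$-equivariant map $\rho$, the invariant $\CZ_{\overline r,n}$ is expressed as a $w$-independent prefactor (depending only on $t_1,t_2$) times the $\TT$-equivariant $K$-theoretic Euler characteristic on $F_n$ of the restricted virtual structure sheaf. Since $F_n$ is proper, this Euler characteristic is a genuine element of $K^0_\TT(\pt)=\BZ[t_1^{\pm 1},t_2^{\pm 1},w^{\pm 1}]$---a Laurent polynomial in each framing parameter $w_{i\alpha}$, in sharp contrast to the arbitrary rational function supplied by the localization formula \eqref{eqn: loc K}.

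The third step is pure rigidity. Using the decomposition $T^{\vir}_\bn=\sum_{i,j,\alpha,\beta}\mathsf{v}^{(ij,\alpha\beta)}_\bn$ from \Cref{prop: t vir} and the vanishing of the virtual dimension (so that numerator and denominator factors of $\mathfrak{e}(-T^{\vir}_\bn)$ match in total $w_{i\alpha}$-degree), each individual fixed-point contribution has finite limits as $w_{i\alpha}\to 0$ and $w_{i\alpha}\to\infty$, and hence so does the full sum. A Laurent polynomial in $w_{i\alpha}$ with finite limits at both $0$ and $\infty$ is necessarily constant in $w_{i\alpha}$; iterating over all framing parameters yields the claim. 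The main obstacle is the rigour of Step 2: identifying the virtual $K$-theoretic pushforward of $\oO^{\vir}$ along $\rho$ with an honest pushforward, and so justifying the passage from the rational function of \eqref{eqn: loc K} to an honest Laurent polynomial. I expect this to follow from the compatibility of Qu's virtual pullback \cite{Qu_virtual_pullback} with proper pushforward via the projection formula, along the lines of the analogous rigidity arguments in \cite{FM_tetra}.
\end{proofof}
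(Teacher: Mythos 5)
Your overall skeleton is the same rigidity principle the paper uses --- establish that $\CZ_{\overline{r}}(q)$ is a Laurent polynomial in the framing variables via the properness of the Quot-to-Chow morphism, then kill all non-constant terms by a boundedness/degree argument --- but your implementation of the polynomiality step is genuinely different. The paper works pole by pole: it writes each $\mathfrak{e}(-T^{\vir}_{\bn})$ as a degree-zero homogeneous rational expression in the $w$'s, and for each potential pole $1-\mathsf{w}$ with $\mathsf{w}=w_{i\alpha}^{-1}w_{j\beta}t^{\nu}$ it invokes the compact-weight criterion of \cite[Prop.~3.10]{FM_tetra} (after \cite[Prop.~3.2]{Arb_K-theo_surface}), checking compactness by conjugating $\TT_{\mathsf{w}}=\ker(\mathsf{w})$ into a twisted copy of $\TT_0$ whose fixed locus is proper precisely because a $\TT_0$-fixed quotient is supported at $0\in\CC$. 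You instead push forward globally along $\rho$ to $\Sym^n\CC$, where $\TT_1$ acts trivially; this is a legitimate and arguably more conceptual route to the same conclusion, and your Step 3 (each fixed-point contribution has finite limits at $w_{i\alpha}\to 0,\infty$ because the $w_{i\alpha}^{-1}w_{j\beta}$-isotypic pieces $\mathsf{v}^{(ij,\alpha\beta)}_{\bn}$ each have rank zero --- a slightly finer fact than ``virtual dimension zero'', but immediate from \Cref{prop: t vir}) correctly closes the argument once polynomiality is in hand.

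The one place you need repair is Step 2, and the fix is not Qu's virtual pullback. As stated, ``a $w$-independent prefactor times the Euler characteristic on $F_n$'' is not right: $\Sym^n\CC$ is singular at $n\cdot 0$, its scheme-theoretic $\TT_0$-fixed locus need not be a reduced point, and the derived restriction of $\rho_*\oO^{\vir}$ to the fiber over $n\cdot 0$ is not $\chi(F_n,\oO^{\vir}|_{F_n})$; in fact you do not need the fiber $F_n$ at all. What you actually need is (a) the decomposition $K_0^{\TT}(\Sym^n\CC)\cong K_0^{\TT_0}(\Sym^n\CC)\otimes_{\BZ}K^0_{\TT_1}(\pt)$, valid because $\TT_1$ acts trivially, which shows that $\chi(\Sym^n\CC,-)$ --- defined by $\TT_0$-concentration, whose denominators involve only $t_1,t_2$ --- sends any class to an element of $\BQ(t_1,t_2)\left[w^{\pm 1}\right]$; and (b) the identity $\chi(\CM_{\overline{r},n},\oO^{\vir})=\chi(\Sym^n\CC,\rho_*\oO^{\vir})$, which is the commutation of Thomason's concentration isomorphism with proper ($G$-theoretic) pushforward, i.e.\ functoriality of pushforward along $\CM_{\overline{r},n}^{\TT}\to(\Sym^n\CC)^{\TT}\to\pt$. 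Both are standard, so your proof goes through once Step 2 is rephrased this way; as written, though, the key compatibility is attributed to the wrong tool and the ``prefactor'' formulation would not survive scrutiny.
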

\begin{proof}
    We prove this  result  by exploiting similar arguments to \cite[Thm. 6.5]{FMR_higher_rank}.    The $n$-th coefficient of $\CZ_{\overline{r}}(q)$ is a sum of contributions
\[
\mathfrak{e}(-T^{\vir}_{\bn}),\qquad \lvert \bn\rvert = n.
\]
By the definition of the operator $\mathfrak{e}(\cdot)$ we have that
\begin{equation*}\label{eqn:poles_showing_up}
\mathfrak{e}(-T^{\vir}_{\bn}) = A(t)\prod_{(i,\alpha)\neq (j,\beta)} \frac{\prod_{\mu_{ij,\alpha\beta}}(1-w_{i\alpha}^{-1}w_{j\beta} t^{\mu_{ij,\alpha\beta}})}{\prod_{\nu_{ij,\alpha\beta}}(1-w_{i\alpha}^{-1}w_{j\beta} t^{\nu_{ij,\alpha\beta}})},
\end{equation*}
where the product is over all $(i,\alpha)\neq (j,\beta) $ such  that $i,j=1, 2$ and $\alpha=1, \dots, r_i$ and $\beta=1, \dots, r_j$ while  $A(t)\in \BQ(\!(t_1, t_2)\!) $ and the number of weights $\mu_{ij,\alpha\beta}$ and $\nu_{ij,\alpha\beta}$ is the same. Thus, $\CZ_{\overline{r}}(q)$ is a homogeneous rational expression of total degree 0 with respect to the variables $w_{i\alpha}$. We aim to show that $\CZ_{\overline{r}}(q)$ has  no poles of the form $1-w_{i\alpha}^{-1}w_{j\beta} t^{\nu}$, implying that it is a degree $0$ polynomial in the $w_{i\alpha}$, hence constant in the $w_{i\alpha}$.

Recall that an irreducible $\TT$-representation $w$ is called \emph{compact} if the fixed locus of the torus  $\TT_{\mathsf w} = \ker (\mathsf w)$ is proper and \emph{non-compact} otherwise, cf.~\cite[Def.~3.1]{Arb_K-theo_surface}.

Set $\mathsf w =  w_{i\alpha}^{-1}w_{j\beta}t^\nu$ for fixed $(i,\alpha)\neq (j,\beta)$ and $t^\nu$ an irreducible $\TT_0$-representation. 
To see that $1-\mathsf w$ is not a pole, we show that $ \mathsf w$ is non-compact  and use \cite[Prop.~3.10]{FM_tetra} (see also \cite[Prop.~3.2]{Arb_K-theo_surface}). 

Consider the automorphism $\tau_\nu\colon \TT \simto \TT$ defined by
\[
(t_1, t_4,w_{11}, \dots, w_{2r_2}) \mapsto (t_1,t_2,w_{11},\ldots,w_{i\alpha}t^{-\nu},\ldots,w_{j\beta},\ldots,w_{2r_2}).
\]
It maps $\TT_{\mathsf w}\subset \TT$ isomorphically onto the subtorus $\TT_0 \times \set{w_{i\alpha}=w_{j\beta}} \subset \TT$. This yields an inclusion of tori
\begin{equation}\label{inclusions_tori}
\TT_0 \simto \TT_0 \times \Set{(1,\ldots,1)} \into \tau(\TT_{\mathsf w}).
\end{equation}
We consider the action $\sigma_\nu \colon \TT \times \CM_{\overline{r}, n} \to \CM_{\overline{r}, n} $ where $\TT_0$ translates the support of the quotient sheaf in the usual way, the $\alpha$-th summand of $\iota_{i,*}\OO^{\oplus r_i}$ gets scaled by $w_{i\alpha}t^\nu$ and all other summands by $w_{j\beta}$ for $(j,\beta)\neq (i,\alpha)$. 
 We have a commutative diagram
\[
\begin{tikzcd}[row sep=large]
\TT_{\mathsf w} \times \CM_{\overline{r}, n} \arrow{r}{\sigma}\arrow[swap]{d}{\tau_\nu\times\id} & \CM_{\overline{r}, n} \arrow[equal]{d} \\
\tau_\nu(\TT_{\mathsf w}) \times \CM_{\overline{r}, n} \arrow{r}{\sigma_\nu} & \CM_{\overline{r}, n} 
\end{tikzcd}
\]
where $\sigma$ is the restriction of the usual $\TT$-action on $\CM_{\overline{r}, n}$. This diagram induces a natural isomorphism $\CM_{\overline{r}, n} ^{\TT_{\mathsf w}}\simto\CM_{\overline{r}, n} ^{\tau_\nu(\TT_{\mathsf w})}$, which combined with \eqref{inclusions_tori}
yields an inclusion
\[
\CM_{\overline{r}, n} ^{\TT_{\mathsf w}}\simto\CM_{\overline{r}, n} ^{\tau_\nu(\TT_{\mathsf w})} \into \CM_{\overline{r}, n} ^{\TT_0},
\]
where $\CM_{\overline{r}, n} ^{\TT_0}$ is the fixed locus with respect to the action $\sigma_\nu$. By the properness of the Quot-to-Chow morphism \eqref{eqn: QTC}, the fixed locus  $\CM_{\overline{r}, n} ^{\TT_0}$is proper, since a $\TT_0$-fixed surjection $\CE_{\overline{r}, n}\onto Q$ necessarily has the quotient $Q$ entirely supported at the origin $0 \in \mathcal{C}$, see also \cite[Rem.~3.1]{FM_tetra}. Thus $\mathsf w$ is a compact irreducible $\TT$-representation, and the result follows.
\end{proof}
By Theorem \ref{thm: framinh independence}  the partition function of tetrahedron instantons does not depend on the framing parameters $w_{i\alpha}$, therefore we can compute it by specialising them to arbitrary values, similarly to \cite[Sec.~3.5.2]{FM_tetra}. We set $w_{i\alpha}=L^{N_{i\alpha}}$, where $N_{i\alpha}\gg 0$ are large integers satisfying  $N_{i\alpha}>N_{i\beta}$ for $\beta>\alpha$ and $ N_{2\beta}\gg N_{1\alpha}$, and take limits $L\to \infty$.
We  totally order the indices $(i,j)$ lexicographically, setting $(j,\beta)>(i,\alpha)$ if $j>i$ or $i=j$ and $\beta>\alpha$.
\begin{prop}\label{prop: limits}
    Let $(j,\beta)>(i,\alpha)$. We have
    \begin{align*}
  & \lim_{L\to \infty}\mathfrak{e}(-\mathsf{v}^{(ij, \alpha\beta)}_{\bn})|_{w_{i\alpha}=L^{N_{i\alpha}}}    =1,\\
     &    \lim_{L\to \infty}\mathfrak{e}(-\mathsf{v}^{(ji, \beta\alpha)}_{\bn})|_{w_{i\alpha}}=t_j^{n_{i\alpha}}.
    \end{align*}
\end{prop}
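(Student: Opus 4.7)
The strategy is a direct asymptotic evaluation of the rational function $\mathfrak{e}(-\mathsf{v}^{(ij,\alpha\beta)}_{\bn})$. The starting point is the elementary telescoping identity $(1-t_{\hat{i}}^{-1})\overline{Z_{n_{i\alpha}}}=1-t_{\hat{i}}^{-n_{i\alpha}}$, which allows one to collapse the expression from \Cref{prop: t vir} into the compact form
\[
\mathsf{v}^{(ij,\alpha\beta)}_{\bn}\;=\;w_{i\alpha}^{-1}w_{j\beta}\,t_{\hat{i}}^{-n_{i\alpha}}\,(1-t_i^{-1})\,Z_{n_{j\beta}}.
\]
From this one reads off that every weight appearing in the virtual representation $\mathsf{v}^{(ij,\alpha\beta)}_{\bn}$ carries a common multiplicative prefactor $w_{i\alpha}^{-1}w_{j\beta}$, and (as in the proof of \Cref{prop: t movable}) the virtual rank of $\mathsf{v}^{(ij,\alpha\beta)}_{\bn}$ is zero. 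Consequently, the operator $\mathfrak{e}(-\,\cdot\,)$ produces a rational function built out of linear factors $(1-w_{i\alpha}w_{j\beta}^{-1}t^{-\mu})^{\pm 1}$ in which the same number of factors appear in the numerator and in the denominator.

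After the specialization $w_{i\alpha}=L^{N_{i\alpha}}$, the common combination becomes $w_{i\alpha}w_{j\beta}^{-1}=L^{N_{i\alpha}-N_{j\beta}}$, whose behaviour as $L\to\infty$ is dictated by the hierarchy $N_{2\beta}\gg N_{1\alpha}$ and $N_{i\gamma}>N_{i\delta}$ for $\delta>\gamma$. For the first identity one shows, using the hypothesis $(j,\beta)>(i,\alpha)$, that each linear factor entering $\mathfrak{e}(-\mathsf{v}^{(ij,\alpha\beta)}_{\bn})$ tends to $1$, so the whole rational function has limit $1$. For the second identity, the prefactor $w_{j\beta}w_{i\alpha}^{-1}$ entering $\mathfrak{e}(-\mathsf{v}^{(ji,\beta\alpha)}_{\bn})$ diverges, and one extracts the limit by replacing each factor with its leading monomial $-w_{j\beta}w_{i\alpha}^{-1}t^{-\mu}$. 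The equal powers of $-w_{j\beta}w_{i\alpha}^{-1}$ cancel between numerator and denominator thanks to the rank-$0$ condition, leaving only a residual monomial in the $t$-variables whose exponent is the first Chern character of $\mathsf{v}^{(ji,\beta\alpha)}_{\bn}$. A direct inspection of this character using the explicit expansion $Z_{n_{i\alpha}}=\sum_{a=0}^{n_{i\alpha}-1}t_{\hat{i}}^a$ and the collapse of $(1-t_j^{-1})Z_{n_{i\alpha}}$ into $t_{\hat{j}}^{\,n_{i\alpha}-1}-t_{\hat{j}}^{-1}$ identifies the residual monomial as precisely $t_j^{n_{i\alpha}}$.

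The main technical obstacle lies in the case analysis dictated by the lexicographic ordering on the indices and the corresponding scaling hierarchy of the $N_{i\alpha}$: one must verify that for every admissible pair $(j,\beta)>(i,\alpha)$ the two regimes (prefactor $\to 0$ and prefactor $\to \infty$) occur exactly as claimed, and that no spurious cancellation of weights produces an indeterminate form along the way. The absence of such pathologies is guaranteed precisely by the $\TT$-movability of $\mathsf{v}^{(ij,\alpha\beta)}_{\bn}$ established in \Cref{prop: t movable}, which ensures that the relevant $\mathfrak{e}$-factors are well-defined and non-vanishing throughout the limit.
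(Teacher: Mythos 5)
Your argument is correct and follows essentially the same route as the paper's proof: specialise $w_{i\alpha}=L^{N_{i\alpha}}$, note that every weight of $\mathsf{v}^{(ij,\alpha\beta)}_{\bn}$ carries the common prefactor $w_{i\alpha}^{-1}w_{j\beta}$ and that the virtual rank is zero, so in one regime each linear factor tends to $1$, while in the other the divergent powers of $L$ and the signs cancel between numerator and denominator, leaving the determinant monomial $t_j^{n_{i\alpha}}$ (your telescoped form $\mathsf{v}^{(ij,\alpha\beta)}_{\bn}=w_{i\alpha}^{-1}w_{j\beta}t_{\hat{i}}^{-n_{i\alpha}}(1-t_i^{-1})Z_{n_{j\beta}}$ is a correct and pleasant extra that the paper only records for the diagonal terms). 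The only slip is cosmetic: since $Z_{n_{i\alpha}}$ is a sum of powers of $t_{\hat{i}}$, the collapse of $(1-t_j^{-1})Z_{n_{i\alpha}}$ into $t_{\hat{i}}^{\,n_{i\alpha}-1}-t_{\hat{i}}^{-1}$ (not $t_{\hat{j}}$) is only available when $i\neq j$; for $i=j$ the same first Chern character computation on $(1-t_i^{-1})Z_{n_{i\alpha}}$ still yields $t_j^{n_{i\alpha}}$, so the conclusion is unaffected.
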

\begin{proof}
    For all monomials $t^{\mu}$, we have the limits
    \begin{align*}
          \lim_{L\to \infty}\mathfrak{e}(w_{i\alpha}^{-1}w_{j\beta}t^{\mu})|_{w_{i\alpha}=L^{N_{i\alpha}}}&=1,\\
           \lim_{L\to \infty}\mathfrak{e}(w_{i\alpha}w_{j\beta}^{-1}t^{\mu})|_{w_{i\alpha}=L^{N_{i\alpha}}}&=(-t^{-\mu})\cdot  \lim_{L\to \infty}L^{N_{j\beta}-N_{i\alpha}}.
    \end{align*}
    Set $Z_{\pi_{jk}}=\sum_{\nu}t^\nu$ and $Z_{\pi_{il}}=\sum_{\mu}t^\mu$. Taking limits, we have
\begin{align*}
   \lim_{L\to \infty}\mathfrak{e}(-\mathsf{v}^{(ij, \alpha\beta)}_{\bn})|_{w_{i\alpha}=L^{N_{i\alpha}}}    &=1,
   \end{align*}
   and
   \begin{align*}
    \lim_{L\to \infty}&\mathfrak{e}(-\mathsf{v}^{(ji, \beta\alpha)}_{\bn})|_{w_{i\alpha}=L^{N_{i\alpha}}}\\
   & =\lim_{L\to \infty}  \mathfrak{e}\left(-w_{i\alpha}w_{j\beta}^{-1}\left((1-t_j^{-1})Z_{n_{i\alpha}}-(1-t_1^{-1})(1-t_2^{-1})\overline{Z_{n_{j\beta}}}Z_{n_{i\alpha}}\right)\right)|_{w_{i\alpha}=L^{N_{i\alpha}}}  \\
    &=t_j^{n_{i\alpha}}.
\end{align*}
\end{proof}
 We define the partition functions
\begin{align*}
    &\CZ^{(1)}(q)=\CZ_{(1,0)}(q),\\
    &\CZ^{(2)}(q)=\CZ_{(0,1)}(q),
\end{align*}
which  by \Cref{cor: chi y} and \Cref{ex: smooth case} are the generating series of equivariant $\chi_{-y}$-genera of the symmetric products $\Sym^{\bullet}\BA^1_i$ for $i=1,2$. Notice that, by symmetry, we have
\[
\CZ^{(2)}(q)=\CZ^{(1)}(q)|_{t_1=t_2, t_2=t_1}.
\]
 For any formal power series 
 \[f(p_1, \ldots, p_r; q_1, \ldots, q_s)\in   \BQ(p_1, \ldots, p_r)[\![q_1, \ldots, q_s]\!],\]
 such that $f(p_1, \ldots, p_r;0,\ldots,0)=0$, its \emph{plethystic exponential} is defined as 
\begin{align}\label{eqn: on ple} 
\Exp(f(p_1, \ldots, p_r;q_1, \ldots, q_s)) &:= \exp\Big( \sum_{n=1}^{\infty} \frac{1}{n} f(p_1^n, \ldots, p_r^n;q_1^n, \ldots, q_s^n) \Big),
\end{align}
viewed as an element of $\BQ(p_1, \ldots, p_r)[\![q_1, \ldots, q_s]\!]$. Recall that the plethystic exponential enjoys the useful identities
\begin{align*}
 \Exp(q+p)&=\Exp(q)\cdot \Exp(p),\\
    \Exp(q)&=\frac{1}{1-q}.
\end{align*}
\begin{prop}\label{prop: rank 1 Exp}
We have
\[
\CZ^{(1)}(q)=\Exp\left(q\cdot \frac{1-t_1t_2}{1-t_2}\right).
\]
\end{prop}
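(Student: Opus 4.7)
The plan is to reduce the left-hand side to a single term by $K$-theoretic localisation, and then match it with the plethystic exponential via a classical $q$-series identity.

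For $\overline r=(1,0)$ the sheaf $\CE_{(1,0)}=\iota_{1,*}\oO_{\BA^1_1}$ is supported on the smooth line $\BA^1_1$, so by \Cref{ex: smooth case} the moduli space is the symmetric product $\CM_{(1,0),n}\cong\Sym^n\BA^1_1$. By \Cref{prop: fixed locus reduced} the $\TT$-fixed locus consists, for each $n$, of a single reduced point, corresponding to the tuple $\bn$ with $n_{11}=n$ and $\bn_2$ empty. In the notation of \Cref{prop: t vir} we then have $K_1=w_{11}$, $K_2=0$, and $Q_{\bn}=w_{11}Z_n$ with $Z_n=\sum_{a=0}^{n-1}t_2^{a}$. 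Using the telescoping identity $(1-t_2^{-1})\overline{Z_n}=1-t_2^{-n}$, the virtual tangent character collapses to
\[
T^{\vir}_{\bn}=(1-t_1^{-1})\sum_{a=1}^{n}t_2^{-a},
\]
which is independent of $w_{11}$, as predicted by \Cref{thm: framinh independence}. Feeding this into the localisation formula \eqref{eqn: loc K} gives
\[
\CZ^{(1)}(q)=\sum_{n\geq 0}q^{n}\prod_{a=1}^{n}\frac{1-t_1 t_2^{a}}{1-t_2^{a}}.
\]

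To identify this series with the target plethystic exponential, I invoke the $q$-binomial theorem (Heine's identity) with base $t_2$ and parameter $a=t_1 t_2$, which yields
\[
\sum_{n\geq 0}q^{n}\prod_{a=1}^{n}\frac{1-t_1 t_2^{a}}{1-t_2^{a}}=\prod_{k\geq 0}\frac{1-t_1 t_2^{k+1}q}{1-t_2^{k}q}.
\]
On the right-hand side of the proposition, I decompose $\frac{1-t_1 t_2}{1-t_2}=\sum_{k\geq 0}t_2^{k}-\sum_{k\geq 0}t_1 t_2^{k+1}$, and then apply multiplicativity of $\Exp$ together with the elementary identities $\Exp(q t_2^{k})=\tfrac{1}{1-qt_2^{k}}$ and $\Exp(-q t_1 t_2^{k+1})=1-q t_1 t_2^{k+1}$, which produce the same infinite product. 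Comparing the two expressions concludes the proof.

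The only step requiring any care is the telescoping simplification of $T^{\vir}_{\bn}$; after that, the identity is a direct instance of the $q$-binomial theorem. No deeper obstacle arises, because the smoothness of $\CM_{(1,0),n}\cong\Sym^n\BA^1_1$ eliminates any genuine virtual cancellation and the entire argument reduces to a single-point equivariant residue.
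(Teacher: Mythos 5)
Your proposal is correct and follows essentially the same route as the paper: localisation at the unique fixed point of $\Sym^n\BA^1_1$, computation of the weight $(1-t_1^{-1})\sum_{a=1}^n t_2^{-a}$ of the virtual tangent space, and identification of the resulting $q$-series with the plethystic exponential. The only (welcome) difference is that you spell out, via the $q$-binomial theorem and the infinite-product form of $\Exp$, the ``standard combinatorial identity'' that the paper simply cites.
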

\begin{proof}
By \Cref{ex: smooth case} and \Cref{prop: caso smooth line ug}, we have that $\CM_{(1,0),n}\cong \BA^n$, and its only fixed point $p_n$ has tangent space
\[
T_{p_n}=\sum_{a=1}^n t_2^{-a}.
\]
Therefore by localisation we have
\begin{align*}
    \CZ^{(1)}(q)&=\sum_{n\geq 0} q^n\cdot \mathfrak{e}(t_1^{-1}\cdot T_{p_n}-T_{p_n})\\
    &=\sum_{n\geq 0} q^n\cdot \prod_{a=1}^n\frac{1-t_1t_2^a}{1-t_2^a}\\
    &=\Exp\left(q\cdot \frac{1-t_1t_2}{1-t_2}\right),
\end{align*}
where in the last line we used a standard combinatorial identity, see e.g.~\cite[Ex.~5.1.22]{Okounk_Lectures_K_theory}.
\end{proof}
We show now that the partition function $\CZ_{\overline{r}}(q)$ is governed by its \emph{rank 1} case.
\begin{theorem}\label{thm:factorization}
    Let $\overline{r}=(r_1, r_2)$. We have the factorisation 
    \begin{align*}
        \CZ_{\overline{r}}(q)=\prod_{\alpha=1}^{r_1}\CZ^{(1)}( q t_1^{r_1-\alpha}t_2^{r_2})\cdot \prod_{\alpha=1}^{r_2}\CZ^{(2)}( q t_2^{r_2-\alpha}).
    \end{align*}
\end{theorem}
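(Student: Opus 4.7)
The plan is to combine the framing independence of Theorem \ref{thm: framinh independence} with the asymptotic formulas of \Cref{prop: limits}. Since $\CZ_{\overline r}(q)$ does not depend on the parameters $w_{i\alpha}$, I may specialise $w_{i\alpha}=L^{N_{i\alpha}}$ with the ordering prescribed before \Cref{prop: limits} and take $L\to\infty$ inside each coefficient in $q$. The localisation expression $\CZ_{\overline r}(q)=\sum_{\bn}\mathfrak{e}(-T^{\vir}_{\bn}) q^{\lvert\bn\rvert}$ will then degenerate into a product of independent sums over the individual $n_{i\alpha}$, each of which is a shifted rank-$1$ partition function.

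The key steps are as follows. Using the decomposition $T^{\vir}_{\bn}=\sum_{i,j,\alpha,\beta}\mathsf{v}^{(ij,\alpha\beta)}_{\bn}$ and multiplicativity of $\mathfrak{e}$, I split the integrand into diagonal pieces $\mathsf{v}^{(ii,\alpha\alpha)}_{\bn}$ (which by the explicit formula in the proof of \Cref{prop: t movable} depend only on $n_{i\alpha}$ and the torus $\TT_0$) and off-diagonal pieces. Grouping each off-diagonal pair $\{(i,\alpha),(j,\beta)\}$ with $(j,\beta)>(i,\alpha)$, \Cref{prop: limits} tells me that the product of the two orderings limits to $1\cdot t_j^{n_{i\alpha}}=t_j^{n_{i\alpha}}$. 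For a fixed source index $(1,\alpha)$ the larger indices split into $r_1-\alpha$ pairs of the form $(1,\beta)$ with $\beta>\alpha$ and $r_2$ pairs of the form $(2,\beta)$, producing an overall character shift $t_1^{(r_1-\alpha)n_{1\alpha}}t_2^{r_2 n_{1\alpha}}$; for a fixed source index $(2,\alpha)$ the shift is $t_2^{(r_2-\alpha)n_{2\alpha}}$. Absorbing these characters into $q^{\lvert\bn\rvert}=\prod_{i,\alpha} q^{n_{i\alpha}}$ shifts $q\mapsto q t_1^{r_1-\alpha}t_2^{r_2}$ in the sum over $n_{1\alpha}$ and $q\mapsto q t_2^{r_2-\alpha}$ in the sum over $n_{2\alpha}$.

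The remaining diagonal factor $\sum_{n_{i\alpha}\geq 0}(q\cdot\text{shift})^{n_{i\alpha}}\mathfrak{e}(-\mathsf{v}^{(ii,\alpha\alpha)}_{n_{i\alpha}})$ is, by comparison with the closed form $\mathsf{v}^{(ii,\alpha\alpha)}_{\bn}=(1-t_i^{-1})\sum_{a=1}^{n_{i\alpha}}t_{\hat i}^{-a}$ derived in \Cref{prop: t movable}, literally the rank-$1$ partition function $\CZ^{(i)}$ evaluated at the shifted variable. Multiplying these independent one-variable sums yields the desired factorisation. The main obstacle is essentially bookkeeping: one must choose the specialisation $N_{i\alpha}$ so that the lexicographic order on indices controls all the asymptotics simultaneously (this is precisely why \Cref{prop: limits} is stated uniformly in $(i,\alpha)$), and one must verify that the diagonal contribution in the rank-$(r_1,r_2)$ theory is \emph{equal}, not just isomorphic, to the rank-$(1,0)$ or $(0,1)$ vertex appearing in $\CZ^{(i)}$. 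Both of these are transparent from the formulas already available, so no new input is needed beyond Theorems \ref{thm: framinh independence} and \ref{prop: limits}.
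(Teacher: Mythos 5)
Your proposal is correct and follows essentially the same route as the paper's proof: framing independence, the specialisation $w_{i\alpha}=L^{N_{i\alpha}}$ with the lexicographic ordering, the pairwise limits $1\cdot t_j^{n_{i\alpha}}$ from \Cref{prop: limits}, the same count of the resulting character shifts $t_1^{r_1-\alpha}t_2^{r_2}$ and $t_2^{r_2-\alpha}$, and the identification of each surviving diagonal sum with a shifted rank-$1$ partition function. No gaps.
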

\begin{proof}
   Set $w_{i\alpha}=L^{N_{i\alpha}}$. By Theorem \ref{thm: framinh independence} the partition function $\CZ_{\overline{r}}(q)$ can be computed in the limit $L\to \infty$. By  Proposition \ref{prop: limits} and \eqref{eqn: decomposition vertex} we have
\begin{align*}
 \CZ_{\overline{r}}(q) &= \lim_{L\to \infty}\sum_{\bn} q^{\lvert \bn \rvert}\prod_{i,j=1}^2 \prod_{\substack{1\leq \alpha\leq r_i\\ 1\leq \beta\leq r_j}} \mathfrak{e}(-\mathsf{v}^{(ij, \alpha\beta)}_{\bn})|_{w_{i\alpha}=L^{N_{i\alpha}}}   \\
     &= \lim_{L\to \infty}\sum_{\bn}\left(\prod_{i=1}^2 \prod_{\alpha=1}^{r_i} q^{n_{i\alpha}}\mathfrak{e}(-\mathsf{v}^{(ii, \alpha\alpha)}_{\bn})\right)\cdot\prod_{(i,\alpha)<(j,\beta)}  \mathfrak{e}(-\mathsf{v}^{(ij, \alpha\beta)}_{\bn})\mathfrak{e}(-\mathsf{v}^{(ji, \beta\alpha)}_{\bn})|_{w_{i\alpha}=L^{N_{i\alpha}}}\\
     &= \sum_{\bn}\left(\prod_{i=1}^2 \prod_{\alpha=1}^{r_i} q^{n_{i\alpha}}\mathfrak{e}(-\mathsf{v}^{(ii, \alpha\alpha)}_{\bn})\right)\cdot\prod_{(i,\alpha)<(j,\beta)}t_j^{n_{i\alpha}}.
    \end{align*}
Rearranging the family of indices obeying $(i, \alpha)<(j, \beta)$ we obtain 
\begin{align*}
\prod_{(i,\alpha)<(j,\beta)}t_j^{n_{i\alpha}}&=\prod_{i=1}^2\prod_{\alpha=1}^{r_i}\left(\prod_{(i, \alpha)<(j, \beta)}t_j^{n_{i\alpha}} \right)\\
&=\prod_{\alpha=1}^{r_1}\left(t_1^{r_1-\alpha}t_2^{r_2}\right)^{n_{1\alpha}} \prod_{\alpha=1}^{r_2}t_2^{(r_2-\alpha)n_{2\alpha}}.
\end{align*}
Therefore we have
\begin{align*}
     \CZ_{\overline{r}}(q) &= \sum_{\bn}\prod_{\alpha=1}^{r_1}\mathfrak{e}(-\mathsf{v}^{(11, \alpha\alpha)}_{\bn})\left(q t_1^{r_1-\alpha}t_2^{r_2}\right)^{n_{1\alpha}}\prod_{\alpha=1}^{r_2}\mathfrak{e}(-\mathsf{v}^{(22, \alpha\alpha)}_{\bn})\left(q t_2^{r_2-\alpha}\right)^{n_{2\alpha}}\\
  &=\prod_{\alpha=1}^{r_1}\sum_{n_{1\alpha}\geq 0} \mathfrak{e}(-\mathsf{v}^{(11, \alpha\alpha)}_{\bn})\left(q t_1^{r_1-\alpha}t_2^{r_2}\right)^{n_{1\alpha}}\cdot \prod_{\alpha=1}^{r_2}\sum_{n_{2\alpha}\geq 0} \mathfrak{e}(-\mathsf{v}^{(22, \alpha\alpha)}_{\bn})\left(q t_2^{r_2-\alpha}\right)^{n_{2\alpha}} \\
 &=\prod_{\alpha=1}^{r_1}\CZ^{(1)}( q t_1^{r_1-\alpha}t_2^{r_2})\cdot \prod_{\alpha=1}^{r_2}\CZ^{(2)}( q t_2^{r_2-\alpha}).
\end{align*}
\end{proof}
Combining Theorem \ref{thm:factorization} with \Cref{prop: rank 1 Exp}, we prove a closed formula for the partition function of gauge origami. 
\begin{corollary}\label{cor: explicit expression inv}
    Let $\overline{r}=(r_1, r_2)$. We have
    \[
    \CZ_{\overline{r}}(q)=\Exp\left(q\cdot \frac{(1-t_1t_2)(1-t_1^{r_1}t_2^{r_2})}{(1-t_1)(1-t_2)}\right).
    \]
\end{corollary}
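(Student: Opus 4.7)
The plan is to simply combine the factorisation in \Cref{thm:factorization} with the rank~$1$ formula of \Cref{prop: rank 1 Exp}, using that the plethystic exponential turns sums into products. First, note that by the obvious symmetry $t_1 \leftrightarrow t_2$ in the construction, \Cref{prop: rank 1 Exp} implies
\[
\CZ^{(2)}(q) = \Exp\!\left(q\cdot \frac{1-t_1t_2}{1-t_1}\right).
\]
Substituting into the factorisation of \Cref{thm:factorization} and applying the identity $\Exp(a+b)=\Exp(a)\Exp(b)$ converts the products into a single plethystic exponential:
\[
\CZ_{\overline{r}}(q) = \Exp\!\left(q\cdot \frac{(1-t_1t_2)\,t_2^{r_2}}{1-t_2}\sum_{\alpha=1}^{r_1}t_1^{r_1-\alpha} \;+\; q\cdot \frac{1-t_1t_2}{1-t_1}\sum_{\alpha=1}^{r_2}t_2^{r_2-\alpha}\right).
\]

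Next I would evaluate the two geometric sums as $\sum_{\alpha=1}^{r_i}t_i^{r_i-\alpha} = (1-t_i^{r_i})/(1-t_i)$, yielding
\[
\CZ_{\overline{r}}(q) = \Exp\!\left(q\cdot \frac{1-t_1t_2}{(1-t_1)(1-t_2)}\Bigl[\,t_2^{r_2}(1-t_1^{r_1}) + (1-t_2^{r_2})\,\Bigr]\right).
\]
The final step is a one-line algebraic identity: the bracket expands as $t_2^{r_2}-t_1^{r_1}t_2^{r_2}+1-t_2^{r_2}=1-t_1^{r_1}t_2^{r_2}$, which gives the claimed closed form. There is no real obstacle here, as both ingredients \Cref{thm:factorization} and \Cref{prop: rank 1 Exp} are already established; the entire corollary is a short manipulation of plethystic exponentials combined with the telescoping identity $t_2^{r_2}(1-t_1^{r_1}) + (1-t_2^{r_2}) = 1 - t_1^{r_1}t_2^{r_2}$.
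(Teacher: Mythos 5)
Your proposal is correct and follows essentially the same route as the paper: both combine the factorisation of \Cref{thm:factorization} with the rank~$1$ formula of \Cref{prop: rank 1 Exp}, use the multiplicativity of $\Exp$, sum the geometric series, and conclude with the identity $t_2^{r_2}(1-t_1^{r_1})+(1-t_2^{r_2})=1-t_1^{r_1}t_2^{r_2}$. No gaps.
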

\begin{proof}
We have the identity
\begin{align*}
    \sum_{\alpha=1}^{r_1}qt_1^{r_1-\alpha}t_2^{r_2}\frac{1-t_1t_2}{1-t_2}+\sum_{\alpha=1}^{r_2}qt_2^{r_1-\alpha}\frac{1-t_1t_2}{1-t_1}&= 
    q(1-t_1t_2)\left( t_2^{r_2}\frac{1-t_1^{r_1}}{(1-t_2)(1-t_1)}+\frac{1-t_2^{r_2}}{(1-t_1)(1-t_2)}\right)\\
    &= q(1-t_1t_2)\cdot \frac{1-t_1^{r_1}t_2^{r_2}}{(1-t_1)(1-t_2)}.
\end{align*}
The claim follows by  combining the above identity with \Cref{thm:factorization} and the multiplicativity of $\Exp$.
\end{proof}
As a second corollary, we obtain a vanishing result for the gauge origami partition function in the Calabi-Yau limit $t_1t_2=1$.
\begin{corollary}
    Let $\overline{r}=(r_1,r_2)$. Then for $n>0$ there is a vanishing
    \[
    \CZ_{\overline{r},n}|_{t_1=t_2^{-1}}=0.
    \]
\end{corollary}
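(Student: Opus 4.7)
The plan is to read off the vanishing directly from the closed formula in \Cref{cor: explicit expression inv}. Recall that
\[
\CZ_{\overline{r}}(q)=\Exp\left(q\cdot \frac{(1-t_1t_2)(1-t_1^{r_1}t_2^{r_2})}{(1-t_1)(1-t_2)}\right).
\]
Under the Calabi--Yau specialisation $t_1=t_2^{-1}$, the factor $(1-t_1t_2)$ in the numerator of the argument vanishes identically, while the denominator $(1-t_1)(1-t_2)$ remains non-zero as an element of $\BQ(t_2)$. Therefore the argument of the plethystic exponential specialises to $0$, and since $\Exp(0)=1$, the first step is to conclude
\[
\CZ_{\overline{r}}(q)\big|_{t_1=t_2^{-1}}=1.
\]

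The final step is simply to extract coefficients of $q^n$: the right-hand side has no positive powers of $q$, so $\CZ_{\overline{r},n}|_{t_1=t_2^{-1}}=0$ for all $n>0$, proving the claim. There is no serious obstacle here; the only point requiring a brief check is that the specialisation $t_1=t_2^{-1}$ is well-defined on $\CZ_{\overline{r}}(q)$, which follows from the fact that after applying $\Exp$ the expression is manifestly a power series in $q$ with coefficients in $\BQ(t_2)$ (the denominators $(1-t_2^n)$ arising from $\Exp$ are not affected by the specialisation).

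\emph{Remark on an alternative approach.} It is also worth noting that one could attempt to prove the vanishing directly from the virtual localisation formula by exhibiting, for each fixed point $\bn$ with $\lvert\bn\rvert=n>0$, a trivial summand in the virtual tangent bundle $T_\bn^{\vir}$ after the specialisation $t_1 t_2 = 1$. Indeed, the diagonal contributions $\mathsf{v}^{(ii,\alpha\alpha)}_\bn=(1-t_i^{-1})\sum_{a=1}^{n_{i\alpha}}t_{\hat{i}}^{-a}$ computed in the proof of \Cref{prop: t movable} would pair up into factors $(1-t_1 t_2)$ when combined with the off-diagonal movable terms, producing a zero factor in $\mathfrak{e}(-T_\bn^{\vir})$. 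However, since the closed formula is already established, the plethystic route above is strictly shorter and is what I would write as the proof.
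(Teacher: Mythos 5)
Your proof is correct and is exactly the argument the paper intends: the corollary is stated immediately after the closed formula of \Cref{cor: explicit expression inv} precisely because the factor $(1-t_1^nt_2^n)$ kills every term of the plethystic exponent at $t_1t_2=1$, leaving $\Exp(0)=1$. Your brief check that the specialisation is well-defined (the denominators $(1-t_1^n)(1-t_2^n)$ do not vanish at $t_1=t_2^{-1}$) is the only point requiring care, and you handle it correctly.
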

\subsection{Nekrasov-Okounkov twist}
We compute in this section  a slight variation of the invariants defined in \eqref{eqn: invariants}. Let $\BE$ be the complex of the obstruction theory on $\CM_{\overline{r}, n}$ and define $K_{\vir}=\det\BE $ to be the \emph{virtual canonical bundle} of $\CM_{\overline{r}, n}$. We set
\begin{align*}
    \widehat{\oO}^{\vir}=\oO^{\vir}\otimes K_{\vir}^{1/2}\in K_0\left(\CM_{\overline{r}, n}, \BZ\left[\frac{1}{2}\right] \right)
\end{align*}
to be the \emph{twisted virtual structure sheaf} after Nekrasov-Okounkov \cite{NO_membranes_and_sheaves}. Notice that, a priori, the square root $  K_{\vir}^{1/2}$ may not exists as a genuine line bundle, but as a class in $K$-theory after inverting 2, see \cite[Sec.~5.1]{OT_1}.
\begin{prop}\label{prop: twist NO}
    Let $\overline{r}=(r_1, r_2)$ and $n\geq 0$. We have an identity of $\TT$-equivariant classes in $K$-theory
    \[
       \widehat{\oO}^{\vir}=\oO^{\vir}\otimes \left(t_1^{r_1}t_2^{r_2}\right)^{-n/2}.
    \]
\end{prop}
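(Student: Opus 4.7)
The plan is to reduce the claim to a pointwise determinant computation at the $\TT$-fixed points. From the zero-locus construction of \Cref{thm: isotropic construction} together with the identification $T^{\vir} = [T_{\CM^{\nc}_{\overline{r},n}}] - [\CV]$ recorded in the proof of \Cref{prop: t vir}, one has $K_{\vir} = \det\BE = (\det T^{\vir})^{-1}$ in $\TT$-equivariant $K$-theory. Consequently $K_{\vir}^{1/2} = (\det T^{\vir})^{-1/2}$, and it suffices to verify the pointwise identity $\det T^{\vir}_{\bn} = (t_1^{r_1}t_2^{r_2})^{n}$ at every $\TT$-fixed point $\bn$ of size $n$.

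For this, I would use the vertex formula of \Cref{prop: t vir}, combined with the elementary rule that, for virtual $\TT$-representations $U$ and $V$, one has $\det(UV) = (\det U)^{\rk V} \cdot (\det V)^{\rk U}$; this follows at once from writing $V=\sum c_\mu t^\mu$ and unpacking $\det V=\prod (t^\mu)^{c_\mu}$. Two inputs are key: first, $1-t_i^{-1}$ has rank $0$ and determinant $t_i$, so $\det((1-t_i^{-1})Q_{\bn}) = t_i^{n}$; second, $(1-t_1^{-1})(1-t_2^{-1}) = 1 - t_1^{-1} - t_2^{-1} + t_1^{-1}t_2^{-1}$ has rank $0$ and determinant $1 \cdot t_1 \cdot t_2 \cdot (t_1 t_2)^{-1} = 1$, whence the quartic term $(1-t_1^{-1})(1-t_2^{-1})Q_{\bn}\overline{Q_{\bn}}$ has trivial determinant. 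Since $\overline{K_i}$ has rank $r_i$, the linear term $\overline{K_i}(1-t_i^{-1})Q_{\bn}$ has determinant $t_i^{r_i n}$, and multiplying the two linear contributions yields $\det T^{\vir}_{\bn} = (t_1^{r_1}t_2^{r_2})^{n}$, constant across fixed points and independent of the framing weights $w_{ij}$.

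To conclude, $K_{\vir}^{1/2}$ has the same constant $\TT$-character $(t_1^{r_1}t_2^{r_2})^{-n/2}$ at every fixed point, and since the twist enters $K$-theoretic invariants only through virtual localisation as in \eqref{eqn: loc K}, this gives the claimed identity of $\TT$-equivariant $K$-classes. The decisive algebraic point is the cancellation $\det((1-t_1^{-1})(1-t_2^{-1}))=1$: without it, the quartic $Q_{\bn}\overline{Q_{\bn}}$ piece would survive in the determinant, forcing a non-trivial dependence on $\bn$ and on the framing parameters, and the Nekrasov--Okounkov twist would cease to be a simple character.
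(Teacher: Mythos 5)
Your proposal is correct and follows essentially the same route as the paper: reduce to the $\TT$-fixed points by localization and verify $\det T^{\vir}_{\bn}=t_1^{nr_1}t_2^{nr_2}$ there. The paper dismisses this determinant as ``a simple computation''; your explicit evaluation via $\det(UV)=(\det U)^{\rk V}(\det V)^{\rk U}$, with the rank-zero cancellations killing both the framing weights and the $Q_{\bn}\overline{Q_{\bn}}$ term, is exactly the intended calculation.
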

\begin{proof}
    By localization in $K$-theory, we can check the required identity at the level of fixed points, so let $\bn$ correspond to a fixed point in $\CM_{\overline{r}, n}^\TT$. A simple computation yields 
    \begin{align*}
      \det  T^{\vir}_{\bn}=t_1^{nr_1}t_2^{nr_2},
    \end{align*}
    by which we conclude the proof.
\end{proof}
We define the partition function 
\[
\widehat{\CZ}_{\overline{r}}(q)=\sum_{n\geq 0}q^n\cdot \chi\left(\mathcal{M}_{\overline{r}, n}, \widehat{\oO}^{\vir}_{\mathcal{M}_{\overline{r}, n}}\right).
\]
Set the operator 
\[[x]=x^{1/2}-x^{-1/2}.\]
  By \Cref{cor: explicit expression inv} we immediately obtain a closed formula for $\widehat{\CZ}_{\overline{r}}(q)$.
\begin{corollary}\label{cor: NO inv}
       Let $\overline{r}=(r_1, r_2)$. We have
      \[
    \widehat{\CZ}_{\overline{r}}(q)=\Exp\left(q\cdot  \frac{[t_1t_2][t_1^{r_1} t_2^{r_2}] }{[t_1][t_2]}\right).
    \]
\end{corollary}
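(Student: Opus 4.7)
The plan is to reduce this statement to a direct algebraic manipulation combining Proposition~\ref{prop: twist NO} with the closed formula of Corollary~\ref{cor: explicit expression inv}, via a monomial rescaling of the formal variable $q$.

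First, I would apply Proposition~\ref{prop: twist NO}, which identifies the Nekrasov--Okounkov twist fibrewise at each $\TT$-fixed point with the scalar factor $(t_1^{r_1}t_2^{r_2})^{-n/2}$. Pulling this scalar inside the generating series yields the formal identity
\[
\widehat{\CZ}_{\overline{r}}(q) \;=\; \sum_{n\geq 0} \bigl(q (t_1^{r_1}t_2^{r_2})^{-1/2}\bigr)^n \,\chi\bigl(\CM_{\overline{r},n}, \oO^{\vir}_{\CM_{\overline{r},n}}\bigr) \;=\; \CZ_{\overline{r}}\!\left(q\, (t_1^{r_1}t_2^{r_2})^{-1/2}\right).
\]
Next I would substitute into the closed form of Corollary~\ref{cor: explicit expression inv}. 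The crucial, but immediate, observation is that the rescaling factor $h(t)=(t_1^{r_1}t_2^{r_2})^{-1/2}$ is a (half-integer) monomial in $(t_1, t_2)$, and therefore satisfies $h(t)^n = h(t^n)$; this is exactly the compatibility condition required for the substitution to pass through the Adams operations defining the plethystic exponential \eqref{eqn: on ple}. Consequently,
\[
\widehat{\CZ}_{\overline{r}}(q) \;=\; \Exp\!\left(q \cdot (t_1^{r_1}t_2^{r_2})^{-1/2} \cdot \frac{(1-t_1t_2)(1-t_1^{r_1}t_2^{r_2})}{(1-t_1)(1-t_2)}\right).
\]

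Finally, one has to check that the argument of $\Exp$ above agrees with $q\cdot [t_1t_2][t_1^{r_1}t_2^{r_2}]/([t_1][t_2])$. This is a one-line rewriting: using $[x] = x^{-1/2}(x-1) = -x^{-1/2}(1-x)$, the four sign changes cancel in pairs, and tracking the half-integer exponents one finds that $(t_1t_2)^{-1/2}$ from $[t_1t_2]$ cancels against $t_1^{-1/2}t_2^{-1/2}$ from $[t_1][t_2]$, leaving precisely the factor $(t_1^{r_1}t_2^{r_2})^{-1/2}$, while the polynomial pieces assemble into $(1-t_1t_2)(1-t_1^{r_1}t_2^{r_2})/((1-t_1)(1-t_2))$. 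There is no real obstacle to this proof; the only conceptual point worth flagging is the compatibility of the monomial rescaling with the plethystic exponential, which is why we needed $h(t)$ to be an actual (fractional) monomial rather than a general rational function.
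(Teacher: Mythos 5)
Your proposal is correct and follows exactly the route the paper intends: the paper derives \Cref{cor: NO inv} by combining \Cref{prop: twist NO} (the twist is the monomial $(t_1^{r_1}t_2^{r_2})^{-n/2}$, i.e.\ a rescaling of $q$) with the closed formula of \Cref{cor: explicit expression inv}, which is precisely your argument. Your explicit remark that the rescaling factor must satisfy $h(t)^n=h(t^n)$ to pass through the plethystic exponential is a useful point that the paper leaves implicit, and your final bracket manipulation checks out.
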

\subsection{Cohomological limit}
Let $\overline{r}=(r_1, r_2)$ and $n\geq 0$. By Corollary \ref{cor: virtual classes}, the moduli space of tetrahedron instantons is endowed with a $\TT$-equivariant  \emph{virtual fundamental class} $[\CM_{\overline{r}, n}]^{\vir}\in A^\TT_{*}\left(\CM_{\overline{r}, n}\right)$. We define the $\TT$-equivariant invariants  
\begin{align*}\label{eqn: cohom invariants}
\CZ^{\coh}_{\overline{r},n}=\int_{[\CM_{\overline{r}, n}]^{\vir}}1 \in \BQ(s_1, s_2, v),
\end{align*}
where $s_1, s_2, v_{11}, \dots, v_{2r_2}$ are the generators of $\TT$-equivariant Chow cohomology $A^\TT_*(\pt)$. In other words, they can be realized as $s_i=c_1(t_i), v_{i\alpha}=c_1(w_{i\alpha})$, i.e.  the first Chern classes of the irreducible representations generating $K^0_\TT(\pt) $.

Since $\CM_{\overline{r}, n}$ is not proper, one cannot directly define  invariants via proper push-forward in equivariant cohomology. Instead,  since the fixed locus $\CM_{\overline{r}, n}^\TT $ is proper, we define invariants via $\TT$-equivariant residues on the fixed locus
\begin{align*}
   \int_{[\CM_{\overline{r}, n}]^{\vir}}1=\sum_{\lvert\bn\rvert=n}\frac{1}{e(T_{\bn}^{\vir})},
\end{align*}
by Graber-Pandharipande virtual localization formula \cite{GP_virtual_localization}, 
where $e(\cdot)$ is the ($\TT$-equivariant) Euler class.

We define the \emph{cohomological gauge origami partition function} as the generating series
\begin{align*}
    \CZ^{\coh}_{\overline{r}}(q)=\sum_{n\geq 0}  q^n \cdot   \int_{[\CM_{\overline{r}, n}]^{\vir}}1\in\BQ(s_1, s_2, v)[\![q]\!].
\end{align*}
Therefore, the gauge origami partition function is given by
\begin{align*}
     \CZ^{\coh}_{\overline{r}}(q)=\sum_{\bn}e(-T^{\vir}_{\bn})\cdot q^{\lvert \bn\rvert},
\end{align*}
where the sum runs over tuples $\bn$ as in \Cref{prop: fixed locus reduced}.

We recall  how the Euler class $e(\cdot)$ acts on a virtual $\TT$-representation.  For an irreducible representation $t_1^{\mu_1} t_2^{\mu_2}w_{11}^{\mu_{11}}\cdots w_{2r_2}^{\mu_{2r_2}}$, we have
\[e(t_1^{\mu_1} t_2^{\mu_2}w_{11}^{\mu_{11}}\cdots w_{2r_2}^{\mu_{2r_2}})=\mu_1s_1+ +\mu_2s_2+\mu_{11}v_{11}+\dots + \mu_{2r_2}v_{2r_2}, \]
and satisfies $e(t^{\mu}\pm t^{\nu})=e(t^{\mu})e(t^{\nu})^{\pm 1}$, as long as $\nu$ is not the trivial weight. To ease the notation we set $\mu=(\mu_1, \mu_2, \mu_{11}, \dots, \mu_{2r_2})$ and $s=(s_1, s_2, v_{11}, \dots, s_{2r_2})$, and write $e(t^{\mu})=\mu\cdot s$ where the product is the usual scalar product.

As explained
in \cite[Sec.~7.1]{FMR_higher_rank}, one should think of $e(\cdot)$ as the \emph{linearization} of $\mathfrak{e}$, since
\begin{align*}
    \mathfrak{e}(t^{\mu})|_{t_i=e^{bs_i}, w_{i\alpha}=e^{bv_{i\alpha}}}&=
  (1-e^{-b\mu\cdot s})\\
  &=be(t^\mu)+O(b^2).
\end{align*}
In particular, if $V$ is a virtual $\TT$-representation of rank 0, the limit $b\to 0$ is well-defined and yields
\begin{align}\label{eqn: lim K to cohom}
    \lim_{b\to 0}\mathfrak{e}(V)|_{t_i=e^{bs_i}, w_{i\alpha}=e^{bv_{i\alpha}}}=e(V).
\end{align}
Since the vertex term  $\mathsf{v}_{\overline{\pi}} $ has rank 0, this identity tells us that the cohomological gauge origami partition function $ \CZ^{\coh}_{\overline{r}}(q) $ is a limit\footnote{Alternatively, we could show that the cohomological invariants are the limit of the $K$-theoretic invariants using virtual  Riemann-Roch \cite{FG_riemann_roch}, see e.g.~\cite[Thm.~6.4]{CKM_crepant}.  } of the tetrahedron instanton partition function $ \CZ_{\overline{r}}(q)$.
\begin{corollary}\label{cor: cohom}
     Let $\overline{r}=(r_1, r_2)$. We have 
     \[\CZ^{\coh}_{\overline{r}}(q)=\left(\frac{1}{1-q}\right)^{  \frac{(s_1+s_2)(r_1s_1+r_2s_2)}{s_1s_2}}.\]
\end{corollary}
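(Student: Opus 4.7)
My plan is to derive the cohomological formula directly as a limit of the $K$-theoretic closed form from \Cref{cor: explicit expression inv}, using the dimensional reduction \eqref{eqn: lim K to cohom} discussed just before the statement. Since the vertex term $T^{\vir}_{\bn}$ has virtual rank $0$ (it is a difference of two $\TT$-representations of the same dimension, coming from the perfect obstruction theory of virtual dimension zero), identity \eqref{eqn: lim K to cohom} applies coefficient by coefficient in $q$, giving
\[
\CZ^{\coh}_{\overline{r}}(q)=\lim_{b\to 0}\CZ_{\overline{r}}(q)\Big|_{t_i=e^{bs_i},\, w_{i\alpha}=e^{bv_{i\alpha}}}.
\]
This reduces the corollary to a straightforward Taylor expansion applied to the explicit formula of \Cref{cor: explicit expression inv}.

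Unwinding the definition \eqref{eqn: on ple} of the plethystic exponential, I would rewrite
\[
\CZ_{\overline{r}}(q)=\exp\!\left(\sum_{n\geq 1}\frac{q^n}{n}\cdot\frac{(1-t_1^n t_2^n)(1-t_1^{nr_1}t_2^{nr_2})}{(1-t_1^n)(1-t_2^n)}\right),
\]
and then compute the limit factor by factor under the substitution $t_i=e^{bs_i}$. Using $1-e^{bx}=-bx+O(b^2)$, every factor $1-t_1^{nk_1}t_2^{nk_2}$ contributes $-bn(k_1s_1+k_2s_2)$ to leading order, so the four factors of $b$ in numerator and denominator, together with the four factors of $n$, cancel cleanly, leaving
\[
\lim_{b\to 0}\frac{(1-t_1^n t_2^n)(1-t_1^{nr_1}t_2^{nr_2})}{(1-t_1^n)(1-t_2^n)}\bigg|_{t_i=e^{bs_i}}=\frac{(s_1+s_2)(r_1s_1+r_2s_2)}{s_1s_2}=:C.
\]
Crucially, this limit is independent of $n$.

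Since this limit exists at every fixed order in $q$ and the $q^n$-coefficient of the exponent involves only a single term of the series, one can commute the limit with the summation on $n$ (no analytic subtlety arises, as we work in $\BQ(s_1,s_2,v)[\![q]\!]$ termwise). Hence
\[
\CZ^{\coh}_{\overline{r}}(q)=\exp\!\left(C\sum_{n\geq 1}\frac{q^n}{n}\right)=\exp(-C\log(1-q))=\left(\frac{1}{1-q}\right)^{C},
\]
which is exactly the claimed expression. The only step requiring any care is justifying the interchange of $\lim_{b\to 0}$ with the infinite sum inside $\exp$, and this is immediate from the termwise nature of the $q$-expansion; no genuine obstacle appears, which reflects the elementary character of the cohomological limit once the full $K$-theoretic answer is in hand.
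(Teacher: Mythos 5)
Your proposal is correct and follows essentially the same route as the paper: both pass from the $K$-theoretic partition function to the cohomological one via the limit \eqref{eqn: lim K to cohom} (justified by the rank-$0$ property of the vertex term), substitute $t_i=e^{bs_i}$ into the plethystic exponential of \Cref{cor: explicit expression inv}, observe that each $n$-th summand tends to the $n$-independent constant $\tfrac{(s_1+s_2)(r_1s_1+r_2s_2)}{s_1s_2}$, and resum $\sum_{n\geq 1}q^n/n=-\log(1-q)$. Your extra remark on the termwise interchange of limit and sum is a harmless elaboration of what the paper leaves implicit.
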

\begin{proof}
    By the limit \eqref{eqn: lim K to cohom}, we can compute the cohomological instanton partition function as
    \[
    \CZ^{\coh}_{\overline{r}}(q)=\lim_{b\to 0}\CZ_{\overline{r}}(q)|_{t_i=e^{bs_i}}.
    \]
 Therefore we have
    \begin{align*}
        \lim_{b\to 0}\CZ_{\overline{r}}(q)|_{t_i=e^{bs_i}}&= \lim_{b\to 0}\exp\left(\sum_{n\geq 1}\frac{q^n}{n}\frac{(1-e^{-bns_1}e^{-bns_2})(1-e^{-bnr_1s_1}e^{-bnr_2s_2})}{(1-e^{-bns_1})(1-e^{-bns_2})}\right)\\
        &= \exp \left(\sum_{n\geq 1} \frac{q^n}{n} \frac{(s_1+s_2)(r_1s_1+r_2s_2)}{s_1s_2}\right)\\
        &=\left(\frac{1}{1-q}\right)^{  \frac{(s_1+s_2)(r_1s_1+r_2s_2)}{s_1s_2}}.
    \end{align*}
\end{proof}
\section{ADHM quiver}
We explain in this section how the gauge origami moduli space  on broken lines -- and its partition function -- relates to the original ADHM quiver construction and to the Quot scheme $\Quot_{\BA^2}(\oO^r, n)$.
\subsection{Quot scheme of $\BA^2$}\label{sec: cut out in QUot}
Let $r\geq 1$ and $n\geq 0$. The Quot scheme $ \Quot_{\BA^2}(\oO^r, n)$ is an irreducible quasi-projective scheme of dimension $(r+1)n$  and is  singular for $ n\geq 2$, see \cite{EL_irreducibility,MR_lissite}.
Nevertheless, it is endowed by a perfect obstruction theory of virtual dimension $rn$, whose construction goes along the same lines of \Cref{sec: zero locus} and  we now explain.

\subsubsection{Quiver model}
Consider the quiver $ \tilde{\LQ}$ in \Cref{fig:Quot-quiver}.
\begin{figure}[ht]\contourlength{2.5pt}
    \centering\vspace{-3mm}
    \begin{tikzpicture}
    \node(F123) at (0,0){$\infty$};
    \node(Gk0) at (2.5,0){$0$};
    \draw[->](F123) to[]node{\contour{white}{}} (Gk0);
    \draw[->](Gk0) to[out=-100,in=-30,looseness=15]node{\contour{white}{}} (Gk0);
        \draw[<-](Gk0) to[out=30,in=100,looseness=15]node{\contour{white}{}} (Gk0);
    \end{tikzpicture}
    \caption{Framed quiver ${\tilde{\LQ}}$.}
    \label{fig:Quot-quiver}
\end{figure}

Fix a dimension vector $(r,n)$ and two vector spaces $(W,V)$ of dimensions $\dim W=r$ and $\dim V=n$.  Similarly to \Cref{sec: quiver},  we define the space
\[
\mathsf U_{r,n}=\Set{(B_1,B_2,I)\in \mathsf R_{\overline r,n}|\,\BC\langle B_1,B_2\rangle I(W)\cong V}\subset   \End(V)^{\oplus 2}\oplus\Hom(W,V),
\]
which admits a free $\GL(V)$-action, so that the quotient 
\[\mathcal M^{\nc}_{r,n}\defeq\mathsf U_{\overline r,n}/\GL(V)\]
exists\footnote{To be precise, there is an evident isomorphism $\mathcal M^{\nc}_{r,n}\cong \mathcal M^{\nc}_{\overline{r},n} $ for all tuples $(r_1, r_2)$ with $r_1+r_2=r$.} as a smooth $(n^2+nr)$-dimensional quasiprojective variety, cf.~\cite[Lemma~2.1]{Ric_noncomm}. Define the  trivial  vector bundle of rank $n^2$
\[
\widetilde{\mathcal{V}}\defeq\Lambda^2\BC^2\otimes\End(V)
\]
on $\mathsf U_{r,n} $, 
which by $\GL(V)$-equivariance descends to a vector bundle on $\mathcal M^{\nc}_{r,n}$.
 \begin{prop}\label{prop:zero sec Quot}
    Let $r\geq 1$ and $n\geq 0$. There exists a section $s\in H^0(\CM_{ r,n}^{\nc},\widetilde{\CV})$ such that $\Quot_{\BA^2}(\oO^r, n)$ is realised as the zero locus $Z(s)$
    \[
\begin{tikzcd}
& \widetilde{\CV}\arrow[d]\\
\Quot_{\BA^2}(\oO^r, n) \cong  Z(s)\arrow[r, hook, "\Tilde{\iota}"] &\CM^{\nc}_{r, n}.\arrow[u, bend right, swap, "s"]
\end{tikzcd}
\]
\end{prop}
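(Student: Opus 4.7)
The proof will closely mirror that of \Cref{thm: isotropic construction}, but in a simpler form since the quiver $\tilde{\LQ}$ has a single framing and the vector bundle $\widetilde{\CV}$ only encodes the ADHM commutator relation, without the additional constraints $B_iI_i=0$ that were needed in the broken-line setting.

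The plan is as follows. First, I would construct the section on the total affine space. Let $(B_1,B_2,I)\in \End(V)^{\oplus 2}\oplus\Hom(W,V)$ and let $(e_1,e_2)$ be a basis of $\BC^2$; define
\[
s(B_1,B_2,I) = (e_1\wedge e_2)\otimes [B_1,B_2],
\]
which is the natural ADHM moment map taking values in $\Lambda^2\BC^2\otimes\End(V)$. Since conjugation by $g\in\GL(V)$ sends $[B_1,B_2]$ to $g[B_1,B_2]g^{-1}$, this section is tautologically $\GL(V)$-equivariant, and therefore restricts to $\mathsf U_{r,n}$ and descends to a global section of $\widetilde{\CV}$ on $\CM^{\nc}_{r,n}$, which I continue to denote by $s$.

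Next, I would produce the closed embedding $\tilde\iota$ using the same recipe as in \Cref{thm: isotropic construction}: a commutative quotient $[\oO_{\BA^2}^{r}\onto Q]$ gives rise to a cyclic $\BC\langle x_1,x_2\rangle$-module structure on $Q$ via the multiplication maps $B_i$ by $x_i$ and the framing $I$ recording the images of the standard generators, yielding a point of $\CM^{\nc}_{r,n}$. This assignment is functorial in flat families and defines a closed immersion $\tilde\iota:\Quot_{\BA^2}(\oO^r,n)\hookrightarrow \CM^{\nc}_{r,n}$.

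The key step is then to identify $Z(s)$ with the image of $\tilde\iota$. On the one hand, the image of $\tilde\iota$ lies in $Z(s)$, because the operators $B_i$ come from multiplication by $x_1,x_2$ in a commutative ring, hence commute. Conversely, given a point $(B_1,B_2,I)\in\CM^{\nc}_{r,n}$ with $[B_1,B_2]=0$, the cyclic $\BC\langle x_1,x_2\rangle$-module $V$ generated by $I(W)$ factors through the commutative quotient $\BC[x_1,x_2]$, since the two-sided ideal generated by $x_1x_2-x_2x_1$ acts trivially. Hence the corresponding non-commutative quotient $[\BC\langle x_1,x_2\rangle^{\oplus r}\onto V]$ descends to a commutative quotient $[\oO_{\BA^2}^{r}\onto V]\in \Quot_{\BA^2}(\oO^r,n)$, producing the inverse assignment. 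The same argument works in families, giving the desired scheme-theoretic identification $\Quot_{\BA^2}(\oO^r,n)\cong Z(s)$.

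There is no serious obstacle: the proof is essentially a specialisation of the argument in \Cref{thm: isotropic construction}, where the two bundle summands $\bigoplus_i \Hom(W_i,V)$ and the corresponding cyclic constraints $B_iI_i=0$ are absent, so only the commutator piece of the moment map survives. The dimension count is also consistent, since $\dim \CM^{\nc}_{r,n}=n^2+rn$ and $\rk\widetilde{\CV}=n^2$, giving virtual dimension $rn$, matching the expected dimension of $\Quot_{\BA^2}(\oO^r,n)$ and the value quoted in \Cref{sec: cut out in QUot}.
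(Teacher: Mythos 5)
Your proposal is correct and follows essentially the same route as the paper, which simply declares the proof ``completely analogous'' to that of \Cref{thm: isotropic construction} with the section reduced to the commutator term $(e_1\wedge e_2)\otimes[B_1,B_2]$. Your write-up fills in the same steps (equivariance and descent of the section, the embedding via the cyclic module structure, and the identification of $Z(s)$ with commuting representations factoring through $\BC[x_1,x_2]$) in more detail than the paper does, and the added dimension count is a harmless consistency check.
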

\begin{proof}
The proof is completely analogous to the one of \Cref{thm: isotropic construction}, with the section $s$ being in this case
\begin{align*}
    s(B_1, B_2, I)=(e_1\wedge e_2)\otimes [B_1B_2],
\end{align*}
for $(B_1, B_2, I)\in \CM_{\overline r,n}$.
\end{proof}
By the methods of \Cref{sec: virtual classes}, $\Quot_{\BA^2}(\oO^r, n)$ is therefore naturally endowed with a virtual fundamental class and a virtual structure sheaf of virtual dimension $nr$
  \begin{align*}
    [\Quot_{\BA^2}(\oO^r, n)]^{\vir}&\in A_{nr}\left(\Quot_{\BA^2}(\oO^r, n)\right),\\
\oO^{\vir}_{\Quot_{\BA^2}(\oO^r, n)}&\in K_0\left(\Quot_{\BA^2}(\oO^r, n)\right).
\end{align*}
\subsubsection{Virtual pullback}
Let $\overline{r}=(r_1,r_2)$ and set $r=r_1+r_2$. We compare now the virtual structures on $\CM_{\overline{r},n}$ and $\Quot_{\BA^2}(\oO^{r}, n) $, via the closed embedding
\[
  j:  \CM_{\overline{r}, n}\hookrightarrow \Quot_{\BA^2}(\oO_{\BA^2}^{r},n).
\]
\smallbreak
Consider the universal quotient $[\oO^{r}\onto \CQ] $ on $ \Quot_{\BA^2}(\oO_{\BA^2}^{r},n)\times \BA^2$ and denote by $\pi, p$ the natural projections respectively to the first and second factor. For $i=1,2$  consider the short exact sequences
\[
0 \to \CI_i \to \oO_{\BA^2}\to \oO_{\BA^1_i}\to 0,
\]
and define the vector bundle $\CI=\left(\CI_1^{r_1}\oplus \CI_2^{r_2}\right)^*$ of rank $r$ on $\BA^2$. The vector bundle $\CI$ decomposes $\TT$-equivariantly as
\begin{align*}
    \CI=\bigoplus_{i=1}^2\bigoplus_{\alpha=1}^{r_i}\oO_{\BA^2}\cdot   w_{i,\alpha}^{-1}t_i^{-1}.
\end{align*}
We define the \emph{tautological vector bundle}\footnote{The sheaf $\CI^{[n]}$ is locally free by a standard application of cohomology and base change, exploiting  the fact that $\CQ$ has relative dimension $0$ over $\Quot_{\BA^2}(\oO_{\BA^2}^{r},n)$.} of rank $rn$ on $ \Quot_{\BA^2}(\oO_{\BA^2}^{r},n)$ by
\begin{align*}
    \CI^{[n]}=\pi_*\hom(p^*\CI^*, \CQ).
\end{align*}

 \begin{prop}\label{prop:zero sec delta in Quot}
    Let $r=(r_1, r_2) $ and $n\geq 0$. There exists a section $s\in H^0(\Quot_{\BA^2}(\oO_{\BA^2}^{r},n), \CI^{[n]})$ such that $\CM_{\overline r,n}$ is realised as the zero locus $Z(s)$
    \[
\begin{tikzcd}
& \CI^{[n]}\arrow[d]\\
\CM_{\overline{r},n} \cong  Z(s)\arrow[r, hook, "j"] &\Quot_{\BA^2}(\oO^r, n).\arrow[u, bend right, swap, "s"]
\end{tikzcd}
\]
\end{prop}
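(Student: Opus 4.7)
The plan is to mirror the proof of \Cref{thm: isotropic construction}: construct $s$ as the tautological obstruction to a quotient factoring through $\CE_{\overline{r}}$, then identify $Z(s)$ with $\CM_{\overline{r},n}$ by a moduli-theoretic argument. Concretely, over $\Quot_{\BA^2}(\oO^r,n)\times \BA^2$ the canonical inclusion $\CI^{*}=\CI_1^{r_1}\oplus \CI_2^{r_2}\hookrightarrow \oO_{\BA^2}^r$ pulls back along $p$ and composes with the universal quotient $p^{*}\oO_{\BA^2}^r\onto \CQ$ to yield a morphism $\varphi\colon p^{*}\CI^{*}\to \CQ$. Since $\CQ$ is $\pi$-flat and $\pi$-finite of relative length $n$, the sheaf $\hom(p^{*}\CI^{*},\CQ)\cong p^{*}\CI\otimes \CQ$ is $\pi$-flat and $\pi$-finite of constant fibrewise rank $rn$; cohomology and base change identify $\pi_{*}\hom(p^{*}\CI^{*},\CQ)$ with the locally free sheaf $\CI^{[n]}$, and $\varphi$ corresponds via the Leray adjunction to a section $s\in H^0(\Quot_{\BA^2}(\oO^r,n),\CI^{[n]})$.

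The second step is to identify $Z(s)$ with $\CM_{\overline{r},n}$ at the level of closed points. At a point $[\oO^r\onto Q]$ the value of $s$ is the global section $\CI^{*}\to Q$ obtained by restricting $\varphi$, and it vanishes precisely when the composition $\CI^{*}\hookrightarrow \oO^r\onto Q$ is the zero map. Equivalently, the quotient factors through $\oO^r/\CI^{*}=\CE_{\overline{r}}$, which is the defining condition for the embedding $j\colon \CM_{\overline{r},n}\hookrightarrow \Quot_{\BA^2}(\oO^r,n)$. Hence $\CM_{\overline{r},n}=Z(s)$ set-theoretically, and moreover the rank count $\rk \CI^{[n]}=rn=\vdim \Quot_{\BA^2}(\oO^r,n)$ is consistent with $\CM_{\overline{r},n}$ having virtual dimension zero, as already established in \Cref{cor: virtual classes}.

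The main work is to upgrade this identification to a scheme-theoretic isomorphism, which I would handle by comparing the two subfunctors of $\Quot_{\BA^2}(\oO^r,n)$. For a scheme $T$, a $T$-point of $Z(s)$ is a family $[\oO^r_T\onto \CQ_T]$ for which the pullback $s_T\in H^0(T,\CI^{[n]}_T)$ vanishes identically; by the same Leray adjunction, this is equivalent to the vanishing of the relative morphism $\CI^{*}_T\to \CQ_T$ on $T\times \BA^2$, and hence to $\CQ_T$ factoring through $p_T^{*}\CE_{\overline{r}}\onto \CQ_T$. Flatness of the resulting quotient of $\CE_{\overline{r},T}$ over $T$ is automatic since the underlying sheaf $\CQ_T$ is unchanged. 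This matches $Z(s)(T)$ with $\CM_{\overline{r},n}(T)$ functorially in $T$, yielding the desired isomorphism $\CM_{\overline{r},n}\cong Z(s)$. The delicate part of the argument is precisely this functorial comparison: one must check that the adjunction $\pi_{*}\hom(p^{*}\CI^{*},\CQ)\cong \CI^{[n]}$ commutes with arbitrary base change $T\to \Quot_{\BA^2}(\oO^r,n)$, which is guaranteed by the flatness and finiteness of $\CQ$ over the base.
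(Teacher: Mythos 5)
Your proposal is correct and follows essentially the same route as the paper: the section is the pushforward of the composite $p^*\CI^* \hookrightarrow \oO^r \onto \CQ$, and $Z(s)$ is identified with $\CM_{\overline{r},n}$ via the equivalence between vanishing of this composite and factorisation of the quotient through $\oO^r/\CI^* = \CE_{\overline{r}}$, checked on closed points and then on flat families. Your added care about cohomology-and-base-change for $\CI^{[n]}$ only makes explicit what the paper relegates to a footnote.
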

\begin{proof}
 Define the section $\sigma$ as the composition 
 \[
 \sigma: p^*\CI^*\into \oO^{r}\onto \CQ,
 \]
 and the tautological section $s=\pi_* \sigma$. We claim that $\CM_{\overline{r},n}\cong Z(s) $.
 
 Let $ q=[\oO_{\BA^2}\onto Q]$ be a closed point of $ \Quot_{\BA^2}(\oO^r, n)$, and consider the diagram
   \[
\begin{tikzcd}
\CI^*\arrow[r, hook]& \oO_{\BA^2}^r\arrow[r, twoheadrightarrow]\arrow[dr] & \CE_{\overline{r}}\arrow[d, dashed, two heads]\\
& & Q.
\end{tikzcd}
\]
The point $q$ is in $\CM_{\overline{r},n}$ if and only if the dotted arrow $ \CE_{\overline{r}}\onto Q$ exists and makes the above diagram commutative. This last condition is equivalent to the vanishing $s(q)=0$. The same argument carries over flat families, by which we conclude the proof. 
\end{proof}
By the virtual pullback formula of \cite{KKP, Manolache-virtual-pb}, we obtain the compatibility of the virtual structures of $\CM_{\overline{r},n}$ and $\Quot_{\BA^2}(\oO^{r}, n) $.
\begin{corollary}\label{cor: v pull formula}
    We have an identification of $\TT$-equivariant virtual cycles
    \begin{align*}
        j_* [\CM_{\overline{r}, n}]^{\vir}&=e(\CI^{[n]})\cap [\Quot_{\BA^2}(\oO^{r}, n)]^{\vir}\in A^\TT_0(\Quot_{\BA^2}(\oO^{r}, n)),\\
        j_* \oO^{\vir}_{\CM_{\overline{r}, n}}&=\mathfrak{e}(\CI^{[n]})\otimes \oO^{\vir}_{\Quot_{\BA^2}(\oO^{r}, n)}\in K_0^\TT(\Quot_{\BA^2}(\oO^{r}, n)).
    \end{align*}
\end{corollary}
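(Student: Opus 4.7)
The plan is to derive both identities as applications of the virtual pullback formula of Manolache \cite{Manolache-virtual-pb}, refined in $K$-theory by \cite{KKP}. By Proposition \ref{prop:zero sec delta in Quot}, the closed embedding $j: \CM_{\overline{r}, n} \hookrightarrow \Quot_{\BA^2}(\oO^r, n)$ is realized globally as the zero locus of a section of the vector bundle $\CI^{[n]}$. The Behrend-Fantechi construction recalled in Section \ref{sec: virtual classes} therefore equips $j$ with a canonical relative perfect obstruction theory whose shifted dual is $\CI^{[n]}|_{\CM_{\overline{r},n}}$. For such a zero-section presentation, the virtual pullback $j^!$ is given, respectively in Chow homology and in $K$-theory, by capping with $e(\CI^{[n]})$ and by tensoring with $\mathfrak{e}(\CI^{[n]})$.

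The first step is to verify the hypothesis of the virtual pullback formula: the two absolute obstruction theories, the one on $\CM_{\overline{r},n}$ from Theorem \ref{thm: isotropic construction} and the one on $\Quot_{\BA^2}(\oO^r, n)$ from Proposition \ref{prop:zero sec Quot}, together with the relative obstruction theory of $j$ above, should fit into a morphism of distinguished triangles
\[
j^* \BE_{\Quot_{\BA^2}(\oO^r, n)} \to \BE_{\CM_{\overline{r},n}} \to \BE_j \to j^*\BE_{\Quot_{\BA^2}(\oO^r, n)}[1]
\]
compatible with the corresponding triangle of truncated cotangent complexes. Once this compatibility is established, the virtual pullback formula immediately yields $[\CM_{\overline{r},n}]^{\vir} = j^! [\Quot_{\BA^2}(\oO^r, n)]^{\vir}$ and its $K$-theoretic analogue, from which the two displayed formulae follow by pushforward along $j$.

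The main obstacle is precisely this compatibility check. Both absolute obstruction theories are constructed from zero loci in the non-commutative Quot schemes, with obstruction bundles $\CV$ and $\widetilde{\CV}$ respectively. Under the natural identification $\CM^{\nc}_{r,n}\cong \CM^{\nc}_{\overline{r},n}$, the section from Theorem \ref{thm: isotropic construction} strictly extends that of Proposition \ref{prop:zero sec Quot} by the additional components $B_iI_i$, which produces a short exact sequence of $\TT$-equivariant vector bundles on $\CM_{\overline{r},n}$
\[
0\to \tilde{\iota}^*\widetilde{\CV} \to \iota^*\CV \to j^*\CI^{[n]} \to 0.
\]
The identification of the cokernel with $j^*\CI^{[n]}$ requires matching, summand by summand via cohomology and base change, the tautological homomorphism bundles $\Hom(W_i,-)$ on the quiver side with the pushforward $\pi_*\hom(p^*\CI_i^*,\CQ)$ on the Quot-scheme side, using that on $\CM_{\overline{r},n}$ the universal quotient $\CQ$ is set-theoretically supported on $\CC\subset\BA^2$ and compatible with the $\TT$-equivariant decomposition of $\CI$ recorded before Proposition \ref{prop:zero sec delta in Quot}. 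Combining this identification with the compatibility of the induced maps into the cotangent complex produces the required triangle of obstruction theories and, via the virtual pullback formula, completes the proof.
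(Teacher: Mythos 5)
Your proposal is correct and follows essentially the same route as the paper: both arguments reduce to exhibiting the compatibility triangle $j^*\BE_{Q}\to\BE_{\CM}\to(\CI^{[n]}|_{\CM})^*[1]$ by comparing the two zero-locus presentations inside the non-commutative Quot scheme, identifying the quotient bundle $\CV/\widetilde{\CV}\cong\bigoplus_i\Hom(W_i\cdot t_i,V)$ with $\CI^{[n]}$ on $\Quot_{\BA^2}(\oO^r,n)$, and then invoking the functoriality of virtual classes from \cite{Manolache-virtual-pb, KKP}. The only cosmetic difference is that you source the relative obstruction theory from the section of \Cref{prop:zero sec delta in Quot}, whereas the paper reads it off directly from the difference of the two obstruction bundles; these give the same class.
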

\begin{proof}
    Denote by $\BE_\CM, \BE_Q$  the (complex of the) obstruction theory respectively on $\CM=\CM_{\overline{r}, n}$ and $Q=\Quot_{\BA^2}(\oO^{r}, n) $. We claim that there exists a commutative diagram of $\TT$-equivariant exact triangles in $\derived^{b}_\TT(\CM_{\overline{r}, n})$
    \begin{equation}\label{eqn: v pullback}
        \begin{tikzcd}
j^*\BE_Q\arrow[r]\arrow[d]& \BE_\CM \arrow[d]\arrow[r]& (\CI^{[n]}|_\CM)^{*}[1]\arrow[d]\\
j^*\BL_{Q}\arrow[r]&\BL_{\CM} \arrow[r]& \BL_{j},
\end{tikzcd}
    \end{equation}
where $ (\CI^{[n]})^*$ is seen as a complex concentrated in degree 0 and $\BL_j$ is the relative cotangent complex. If the claim holds, then the required identities follow by the functoriality of virtual classes, see e.g.~\cite{KKP, Manolache-virtual-pb}.

To prove the claim, we use the description of $\CM, Q $ as moduli spaces of quiver representations inside $\mathcal M^{\nc}_{r,n}$.  Define the $\TT$-equivariant    vector bundle of rank $rn$
\begin{align}\label{eqn: taut W}
    \CW\defeq\bigoplus_{i=1}^2\Hom(W_i\cdot t_i, V)
\end{align}
on $\mathsf U_{r,n} $, 
which by $\GL(V)$-equivariance descends to a $\TT$-equivariant vector bundle on $\mathcal M^{\nc}_{r,n}$. Then there is an identity of $\TT$-equivariant vector bundles
\begin{align*}
    \CW|_{Q}=\CI^{[n]}
\end{align*}
on $Q$, which by the explicit description of $\BE_Q, \BE_\CM$  as in \eqref{eqn: obs th}
implies that 
\[
(\CI^{[n]}|_\CM)^{*}[1]\cong\Cone\left(j^*\BE_Q\to \BE_\CM   \right),
\]
and that in particular \eqref{eqn: v pullback} commutes.
\end{proof}
By \Cref{cor: v pull formula}, we can write the invariants of the gauge origami moduli space as ($K$-theoretic) $\TT$-equivariant intersection products on the Quot scheme $\Quot_{\BA^2}(\oO^{r}, n)$, by
\begin{align*}
      \chi\left(\mathcal{M}_{\overline{r}, n}, \oO^{\vir}_{\mathcal{M}_{\overline{r}, n}}\right)&=  \chi\left( \Quot_{\BA^2}(\oO^{r}, n), \oO^{\vir}_{\Quot_{\BA^2}(\oO^{r}, n)}\otimes \Lambda^\bullet(\CI^{[n]})^{*}\right),\\
     \int_{[\CM_{\overline{r}, n}]^{\vir}}1&=\int_{[\Quot_{\BA^2}(\oO^{r}, n)]^{\vir}}e(\CI^{[n]}).
\end{align*}
\begin{remark}
We remark that computing virtual invariants from the Quot scheme $\Quot_{\BA^2}(\oO^{r}, n) $ is general more complicated than doing it directly from $\CM_{\overline{r},n}$. In fact, performing virtual localization on $ \Quot_{\BA^2}(\oO^{r}, n)$ reduces these tautological  invariants to the harder combinatorics of Young diagrams, for which there are often no exact formulas. See for instance \cite{Lim_more_vir_sur, OP_quot_schemes_curves_surfaces, Stark_Quot_cos, Boj_Quot} for some related computations on $\Quot_{S}(\oO^{r}, n)$ in the case of a projective surface $S$ and \cite{BH_Quot} in the context of the Segre-Verlinde correspondence. 
We further remark that it was noticed in  \cite{Lim_more_vir_sur} a compatibility of smooth and virtual cycles for Quot schemes defined on $C, S$, where $C$ is a smooth canonical  curve of a smooth projective surface $S$. With this analogy in mind, notice that 
\[\CC=Z(x_1x_2)\subset \BA^2\]
should be regarded as a (singular) \emph{$\TT_0$-equivariant} canonical curve, with respect to the standard $\TT_0$-action on $\BA^2$, by being the zero locus of a $\TT_0$-equivariant section of the $\TT_0$-equivariant canonical bundle $\omega_{\BA^2}=\oO_{\BA^2}\cdot t_1t_2$.
\end{remark}
\subsection{ADHM quiver}\label{sec: ADHM}
Let $r\geq 1$ and $n\geq 0$. A similar situation as in \Cref{sec: cut out in QUot} happens in relation to the moduli space $\CM^{\fr}_{r,n}$ of framed sheaves on $\BP^2$ along a fixed projective line $\BP^1\subset \BP^2$. Recall that $ \CM^{\fr}_{r,n}$ is defined as 
\begin{align*}
 \CM^{\fr}_{r,n}=\Set{
  (\mathscr E,\Phi)\,|
     \begin{array}{c}
       \mathscr E \in \Coh(\BP^2)\textrm{ is a }\textrm{ torsion free sheaf} \\
       \textrm{with }\ch(\mathscr E) = (r,0,0,-n), \textrm{ and }\Phi\colon \mathscr E|_{\BP^1} \simto \OO^{\oplus r}_{\BP^1}
     \end{array}
     } \Bigg{/}\sim  
\end{align*}
and is smooth irreducible of dimension $2nr$,  see e.g.~\cite{Nak_lectures_Hilb_schemes,CR_framed_sheaves}. The moduli space $\CM^{\fr}_{r,n}$ admits a well-known  description as Nakajima quiver variety, see \Cref{fig: AFHM}. 
\begin{figure}[ht]\contourlength{2.5pt}
    \centering
     \resizebox{0.3\linewidth}{!}{
    \begin{tikzpicture}
        \node (F123) at (0,0) {$\infty$};
        \node (Gk0) at (2.5,0) {$0$};
                \draw[<-] (Gk0) to[bend right=20] (F123); 
        \draw[->] (Gk0) to[bend left=20] (F123); 
        \draw[->] (Gk0) to[out=-100, in=-30, looseness=8] (Gk0);
        \draw[<-] (Gk0) to[out=30, in=100, looseness=8] (Gk0);
    \end{tikzpicture}
    }
    \caption{ADHM quiver.}  \label{fig: AFHM}
\end{figure}
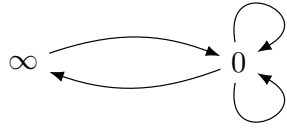

Set 
\[
\mathsf U^{\fr}_{r,n}=\Set{(B_1,B_2,I,J)\,|
\begin{array}{c}
[B_1,B_2]+IJ=0\textrm{, and there is no subspace} \\
S\subsetneq \BC^n\textrm{ such that }B_i(S)\subset S\textrm{ and }\mathrm{im}\,I \subset S
\end{array}
}, 
\]
where $(B_1,B_2,I,J)\in \End(V)^{\oplus 2}\oplus\Hom(W,V)\oplus\Hom(V,W)$ where $\dim W=r$ and $\dim V=n$. Then 
\[
\CM^{\fr}_{r,n}\cong \mathsf U^{\fr}_{r,n}/\GL(V).
\]
There is a natural $\TT$-action on $\CM^{\fr}_{r,n}$  given by
 \[(t, w)\cdot (B_1, B_2,  I, J)=(t_1^{-1} B_1, t_2^{-1} B_2, \overline{w}^{-1}I, \overline{w}t_1^{-1}t_2^{-1}J) , \]
 with notation as in \Cref{sec: ytorus action}.

 Set $W=W_1\oplus W_2$, with $\dim W_i=r_i$ and define the trivial vector bundles
 \begin{align*}
     \CF_i&=\Hom(W_i, V), \quad i=1,2,\\
     \CF&=\CF_1\oplus \CF_2,
 \end{align*}
 which by $\GL(V)$-equivariance descend to vector bundles on $\CM^{\fr}_{r,n}$.

 It was already noticed in \cite[Sec.~3.2]{CR_framed_sheaves} that there is a closed embedding
 \[\tilde{j}:\Quot_{\BA^2}(\oO^{r}, n) \into  \CM^{\fr}_{r,n}\]
 realized as the zero locus $J=0$. Using the quiver description of both moduli spaces and the functoriality of virtual classes as in \Cref{cor: v pull formula},  we have an identification of $\TT$-equivariant virtual cycles
    \begin{equation}\label{eqn: taut Fi}
    \begin{split}
        \tilde{j}_* [\Quot_{\BA^2}(\oO^{r}, n)]^{\vir}&=e(\CF^*\cdot t_1^{-1}t_2^{-1})\cap [ \CM^{\fr}_{r,n}]\in A^\TT_{rn}( \CM^{\fr}_{r,n}),\\
            \tilde{j}_* \oO^{\vir}_{\Quot_{\BA^2}(\oO^{r}, n)}&=\mathfrak{e}(\CF^*\cdot t_1^{-1}t_2^{-1})\in K_0^\TT( \CM^{\fr}_{r,n}).
            \end{split}
    \end{equation}
Notice that
\[
\bigoplus_{i=1}^2 \CF_i\cdot t_i^{-1}|_{\Quot_{\BA^2}(\oO^{r}, n)}=\CW, 
\]
with $\CW$ defined on \eqref{eqn: taut W}. Therefore,  combining \Cref{prop:zero sec Quot} with \eqref{eqn: taut Fi} and a manipulation as in \eqref{eqn: y deformation}, we obtain the following corollary.
\begin{corollary}\label{cor: ADHM vpull}
    Let $\overline{r}=(r_1, r_2)$ and $n\geq 0$. Let $ \iota: \CM_{\overline{r},n}\into  \CM^{\fr}_{r,n} $ denote the natural closed embedding. Then we have
      \begin{align*}
        \iota_* [ \CM_{\overline{r},n}]^{\vir} &=e\left(\CF_1\cdot t_1^{-1} \right)\cdot e\left(\CF_2\cdot t_2^{-1} \right)\cdot e(\CF^*\cdot t_1^{-1}t_2^{-1})\cap [ \CM^{\fr}_{r,n}]\in A^\TT_{0}( \CM^{\fr}_{r,n}),\\
            \iota_* \oO^{\vir}_{\CM_{\overline{r},n}} &=\Lambda_{-t_1}\CF_1^{*}\otimes \Lambda_{-t_2}\CF_2^{*}\otimes  \Lambda_{-t_1t_2}\CF\in K_0^\TT( \CM^{\fr}_{r,n}).
    \end{align*}
\end{corollary}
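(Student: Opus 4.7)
My proposed approach is to factor the embedding $\iota$ as the composition
\[
\CM_{\overline{r},n}\stackrel{j}{\hookrightarrow}\Quot_{\BA^2}(\oO^{r},n)\stackrel{\tilde{j}}{\hookrightarrow}\CM^{\fr}_{r,n},
\]
where $j$ is the zero-locus embedding of \Cref{prop:zero sec delta in Quot} and $\tilde{j}$ is the ADHM embedding cutting out the arrow $J=0$. I would then push the virtual classes of $\CM_{\overline{r},n}$ forward stepwise along this factorisation: first along $j$ using \Cref{cor: v pull formula}, then along $\tilde{j}$ using \eqref{eqn: taut Fi}, glued together via the projection formula.

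The bridge between the two steps is the $\TT$-equivariant identification
\[
\CI^{[n]}\cong \tilde{j}^{*}\bigl(\CF_1\cdot t_1^{-1}\oplus \CF_2\cdot t_2^{-1}\bigr)
\]
of tautological bundles on $\Quot_{\BA^2}(\oO^{r},n)$, which has essentially already appeared in the proof of \Cref{cor: v pull formula} via comparison of the two quiver models. Granting this, combining the projection formula with \Cref{cor: v pull formula} and \eqref{eqn: taut Fi} yields
\begin{align*}
\iota_{*}[\CM_{\overline{r},n}]^{\vir} &= e\bigl(\CF_1\cdot t_1^{-1}\bigr)\cdot e\bigl(\CF_2\cdot t_2^{-1}\bigr)\cdot e\bigl(\CF^{*}\cdot t_1^{-1}t_2^{-1}\bigr)\cap [\CM^{\fr}_{r,n}],\\
\iota_{*}\oO^{\vir}_{\CM_{\overline{r},n}} &= \mathfrak{e}\bigl(\CF_1\cdot t_1^{-1}\bigr)\cdot\mathfrak{e}\bigl(\CF_2\cdot t_2^{-1}\bigr)\cdot\mathfrak{e}\bigl(\CF^{*}\cdot t_1^{-1}t_2^{-1}\bigr),
\end{align*}
where in the second line one invokes multiplicativity of $\mathfrak{e}$ on direct sums to split $\mathfrak{e}(\CW)$. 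Applying the identity $\mathfrak{e}(V)=\sum_{i\geq 0}(-1)^i\Lambda^iV^{*}$, exactly as in the manipulation \eqref{eqn: y deformation}, converts the three factors on the right into $\Lambda_{-t_1}\CF_1^{*}$, $\Lambda_{-t_2}\CF_2^{*}$, and $\Lambda_{-t_1t_2}\CF$ respectively, reproducing the stated $K$-theoretic formula.

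I do not expect a serious conceptual obstacle; the only delicate point is bookkeeping the $\TT$-equivariant twists, i.e.\ ensuring that the identification $\CI^{[n]}\cong\bigoplus_i\CF_i\cdot t_i^{-1}$ holds with the correct torus characters on both sides. This can be verified directly from the quiver descriptions of $\Quot_{\BA^2}(\oO^{r},n)$ and $\CM^{\fr}_{r,n}$, comparing the universal maps $I_i\colon W_i\to V$ entering both constructions and tracking the framing weights introduced in \Cref{sec: ytorus action}.
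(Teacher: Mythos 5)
Your proposal is correct and follows essentially the same route as the paper: factor $\iota=\tilde{j}\circ j$, combine \Cref{cor: v pull formula} with \eqref{eqn: taut Fi} via the identification $\CI^{[n]}\cong\bigoplus_i\CF_i\cdot t_i^{-1}|_{\Quot_{\BA^2}(\oO^r,n)}=\CW$ (already established in the proof of \Cref{cor: v pull formula}), and convert $\mathfrak{e}$ into the $\Lambda_{-y}$ factors as in \eqref{eqn: y deformation}. No gaps.
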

In particular, this implies that the gauge origami invariants defined via $\CM_{\overline{r}, n}$ can be recast as
\begin{align*}
      \chi\left(\mathcal{M}_{\overline{r}, n}, \oO^{\vir}_{\mathcal{M}_{\overline{r}, n}}\right)&=  \chi\left( \CM^{\fr}_{r,n},  \Lambda_{-t_1}\CF_1^{*}\otimes \Lambda_{-t_2}\CF_2^{*}\otimes  \Lambda_{-t_1t_2}\CF\right),\\
     \int_{[\CM_{\overline{r}, n}]^{\vir}}1&=\int_{\CM^{\fr}_{r,n}}e\left(\CF_1\cdot t_1^{-1} \right)\cdot e\left(\CF_2\cdot t_2^{-1} \right)\cdot e(\CF^*\cdot t_1^{-1}t_2^{-1}).
\end{align*}
The generating series of such  intersection numbers on $\CM^{\fr}_{r,n} $ are classically known as \emph{Nekrasov's partition functions} with \emph{fundamental matters} $ \Lambda_{-t_1}\CF_1^{*},  \Lambda_{-t_2}\CF_2^{*}$ and \emph{anti-fundamental matter} $\Lambda_{-t_1t_2}\CF$.

\bibliographystyle{amsplain-nodash}
\bibliography{The_Bible}
\end{document}